\tikzstyle{w puncture}=[fill=none, draw=none, shape=circle, text={rgb,255: red,226; green,121; blue,46}, tikzit fill={rgb,255: red,226; green,121; blue,46}]
\tikzstyle{z puncture}=[fill=none, draw=none, shape=circle, text={rgb,255: red,84; green,174; blue,50}, tikzit fill={rgb,255: red,84; green,174; blue,50}]
\tikzstyle{intersection point}=[fill=black, draw=black, shape=circle, minimum size=2pt, inner sep=0pt]
\tikzstyle{dot}=[fill=black, draw=none, shape=circle, inner sep=0pt, minimum size=2.5pt]
\tikzstyle{su2 puncture}=[fill=none, draw=none, shape=circle, tikzit fill={rgb,255: red,218; green,59; blue,38}, text={rgb,255: red,218; green,59; blue,38}]
\tikzstyle{stop}=[fill=black, draw=black, shape=rectangle]
\tikzstyle{verma}=[fill=black, draw=black, shape=circle, inner sep=0pt, minimum size=7pt]
\tikzstyle{brane}=[-, thick]
\tikzstyle{map}=[dashed, ->, draw={rgb,255: red,50; green,116; blue,181}]
\tikzstyle{w puncture strand}=[-, draw={rgb,255: red,226; green,121; blue,46}, thick]
\tikzstyle{z puncture strand}=[-, draw={rgb,255: red,84; green,174; blue,50}, thick]
\tikzstyle{arrow}=[->]
\tikzstyle{I-brane}=[-]
\tikzstyle{disk}=[-, draw=none, tikzit draw={rgb,255: red,255; green,128; blue,0}, fill={rgb,255: red,191; green,191; blue,191}]
\tikzstyle{brane with arrow}=[->, thick, draw={rgb,255: red,50; green,116; blue,181}]
\tikzstyle{sl3 e1 brane}=[-, draw={rgb,255: red,50; green,116; blue,181}, thick]
\tikzstyle{sl3 w1 puncture strand}=[-, thick, draw={rgb,255: red,50; green,116; blue,181}]
\tikzstyle{red I-brane}=[-, draw={rgb,255: red,218; green,59; blue,38}, thick]
\tikzstyle{red I-brane with arrow}=[draw={rgb,255: red,218; green,59; blue,38}, ->, thick]
\tikzstyle{su2 puncture strand}=[-, draw={rgb,255: red,218; green,59; blue,38}, thick, densely dotted]
\tikzstyle{braid}=[-, very thick]
\tikzstyle{braid arrow}=[very thick, ->, draw=black]
\newtheorem{theorem}{Theorem}[section]
\newtheorem{lemma}[theorem]{Lemma}
\newtheorem{corollary}[theorem]{Corollary}
\newtheorem{proposition}[theorem]{Proposition}
\newtheorem{definition}[theorem]{Definition}
\newtheorem{remark}[theorem]{Remark}
\def\R{\mathbb{R}}
\def\C{\mathbb{C}}
\def\Z{\mathbb{Z}}
\def\MCB{\mathcal{M}^\times}
\DeclareMathOperator{\Hom}{Hom}
\DeclareMathOperator{\module}{mod}
\title{Aganagic's invariant is Khovanov homology}
\author{Elise LePage and Vivek Shende}
\date{}
\begin{document}

\maketitle

\begin{abstract}
    On the Coulomb branch of a quiver gauge theory, there is a family of functions parameterized by choices of points in the punctured plane.  Aganagic has  predicted that Khovanov homology
    can be recovered from the braid group action on Fukaya-Seidel categories arising from monodromy in said space of potentials.
    These categories have since been rigorously studied, and shown to contain a certain (combinatorially defined) category on which Webster had previously constructed a (combinatorially defined) braid group action from which the Khovanov homology can be recovered. 

    Here we show, by a direct calculation, that the aforementioned containment intertwines said combinatorially defined braid group action with the braid group action arising naturally from monodromy.  This provides a mathematical verification that Aganagic's proposal gives a symplectic construction of Khovanov homology -- with both gradings, and over the integers. 
\end{abstract}

\section{Introduction}

Around the turn of the millenium, 
knot theorists learned to associate certain graded-vector-space-valued invariants to knots and links.  In fact, two rather different such invariants appeared. 
The first, Khovanov homology, 
is a combinatorially defined chain 
complex whose Euler
characteristic recovered the Jones 
polynomial \cite{Khovanov}, and is by now situated in the context of the `categorification' of quantum groups and their representations \cite{Rouquier-2kac, Khovanov-Lauda-diagrammatics-1, Khovanov-Lauda-diagrammatics-2, Webster-weighted, webster}.  
The second, the Ozsv\'ath-Szab\'o `Heegard-Floer' homology, is constructed using Lagrangian Floer homology in a symmetric power of a Heegard surface which splits the knot (or 3-manifold) of interest into trivial pieces, and arose from Atiyah-Floer type considerations around the Seiberg-Witten
gauge theory \cite{Ozsvath-Szabo-1, Ozsvath-Szabo-knot}.  

It is desirable to bring these two theories into a common framework.  One natural direction is to try and construct the Khovanov homology in a manner as similar as possible to the Heegard-Floer homology. 
The first progress in this direction was the
`symplectic Khovanov homology' of Seidel and Smith \cite{SS}, further developed by many authors \cite{manolescu, manolescu2, abouzaid-smith-arc,  abouzaid-smith:khovanov,  Cheng-bigrading}. 
These works establish that Khovanov homology, at least over $\mathbb{Q}$, can be realized in terms of the Lagrangian Floer homology in a certain space of representation theoretic origin.  (One can also travel the other direction and try to understand Heegard-Floer homology as categorified representation theory of $\mathfrak{gl}(1|1)$ \cite{Rouquier-Manion}.)

More recently, Aganagic has proposed a new setting for constructing homological knot invariants from Lagrangian Floer theory \cite{aganagic-icm, aganagic-knot-2}; it is related by Atiyah-Floer type considerations to the gauge theory which Witten  previously predicted should produce Khovanov homology \cite{witten-fivebranes, gaiottowitten2011}.  
Aganagic's proposal has a version appropriate to each semisimple Lie algebra, and also a generalization to superalgebras, with the $\mathfrak{gl}(1|1)$ case recovering the Heegard-Floer theory.  For $\mathfrak{sl}(2)$, it is a close relative of the Seidel-Smith setup, differing by removing a divisor and adding a superpotential.  A key advantage is that the crucial `Jones grading' on the homology groups has a far more transparent origin (the grading arises from a class in ordinary, rather than symplectic, cohomology).  

We recall the setup. 
Let $\Gamma$ be a quiver whose underlying graph is the Dynkin diagram of the desired Lie algebra.  Fix a dimension vector $\vec{d}$, and let 
$\mathcal{M}^\times(\Gamma, \vec{d})$ be the multiplicative Coulomb branch associated to the quiver gauge theory $(\Gamma, \vec{d})$, as mathematically defined in \cite{BFN}.  In addition, fix a collection $\mathbf{a}$ of (`red') points in the annulus, labelled by the vertices of the Dynkin diagram.  There is a certain regular function $\mathcal{W}_{\mathbf{a}}:\mathcal{M}^\times(\Gamma, \vec{d}) \to \mathbb{C}$; we are interested in the Fukaya-Seidel category $Fuk(\mathcal{M}^\times(\Gamma, \vec{d}), \mathcal{W}_{\mathbf{a}})$. 

Monodromy in the space of parameters `$\mathbf{a}$' gives an action of the annular braid group: 
$$\rho_{A}: ABr_{|\mathbf{a}|} \to Aut(Fuk(\mathcal{M}^\times(\Gamma, \vec{d}), \mathcal{W}_{\mathbf{a}})).$$

Let $\beta$ be a 2n-stranded ordinary (not annular) braid, and $\overline{\beta}$ its plat closure.  Consider the special case of the above setup where $\Gamma = \bullet, \vec{d} = (n)$, $|\mathbf{a}| = 2n$.  In this case, Aganagic proposed a certain object $\cup_A^{n} \in Fuk(\mathcal{M}^\times(\bullet, n), \mathcal{W}_{\mathbf{a}})$ and conjectured that the Khovanov homology of $\overline{\beta}$ is recovered by the Hom pairing (up to a degree shift involving the writhe which we systematically omit):\footnote{In \cite{aganagic-knot-2, ALR}, the braid is put instead in the first factor, but in compensation (for therefore having the mirror knot) there is a   $q^{1/2} \to -q^{-1/2}$ substitution in \cite[Eq. 5.64]{ALR}.} 
\begin{equation}\label{aganagic conjecture}
    Kh(\overline{\beta}) \stackrel{?}{\cong} \mathrm{Hom}_{Fuk(\mathcal{M}^\times(\bullet, n), \mathcal{W}_{\mathbf{a}})}(\cup^{n}_A, \rho_A(\beta)\, \cup^{n}_A) 
\end{equation}
An argument that the right hand side is a link invariant decategorifying to the Jones polynomial was presented in \cite{ALR}, along with calculational techniques motivated by the new geometric setup.  

\vspace{2mm}

A mathematically rigorous account of 
$Fuk(\mathcal{M}^\times(\Gamma, \vec{d}), \mathcal{W}_{\mathbf{a}})$ 
is now available \cite{ADLSZ}.  The main result of said article is an embedding of 
Webster's \cite{Webster-weighted} combinatorial/diagrammatically defined category of modules over a quiver Hecke algebra: 
\begin{equation} \label{five authors result} \mathrm{Perf} \, \mathcal{A}^{cyl}(\Gamma, \vec{d}, \mathbf{a}) \hookrightarrow Fuk(\mathcal{M}^\times(\Gamma, \vec{d}), \mathcal{W}_{\mathbf{a}}).
\end{equation}
(Webster's construction depends on the choice of cyclically ordered points on labeled by representations; by writing `$\mathbf{a}$' on the LHS above, we mean to ask that the elements of $\mathbf{a}$ have distinct arguments, take these arguments as points on the circle, and label by the fundamental representation corresponding to the given node of the diagram.)

Webster has previously constructed, 
by explicit diagrammatic/combinatorial formulas, a braid group action $\rho_W$
on $\mathrm{Perf} \, \mathcal{A}^{cyl}(\Gamma, \vec{d}, \mathbf{a})$, and a certain distinguished object $\cup^{n}_W$, and shown that 
the Khovanov homology of a plat closure $\overline{\beta}$ of a braid $\beta$ is recovered as \cite{webster}:\footnote{Webster originally worked with a planar, rather than cylindrical, version of the quiver Hecke algebra.  The cylindrical version appears in his later \cite{webster2019coherent}, and carries an annular braid group action given by the same formulas, hence can also be used to compute Khovanov homology when one restricts attention to the ordinary braid group.  Working with the full annular braid group presumably recovers the annular Khovanov homology.}
\begin{equation}\label{webster result}
    Kh(\overline{\beta}) \cong \mathrm{Hom}_{\mathcal{A}(\bullet, n, 2n)-mod}(\cup^{n}_W, \rho_W(\beta) \cup^{n}_W).
\end{equation}

Here we show: 

\begin{theorem} \label{thm: intertwining}
    For $\Gamma = \bullet$, the embedding \eqref{five authors result} intertwines the braid group representations $\rho_W$ and $\rho_A$, and
    carries $\cup_W^n \mapsto \cup_A^n$.    
    In particular, 
    $$
   Kh(\overline{\beta}) \cong \mathrm{Hom}_{Fuk(\mathcal{M}^\times(\bullet, n), \mathcal{W}(2n))}( \cup^{n}_A, \rho_A(\beta) \, \cup^{n}_A).
   $$
\end{theorem}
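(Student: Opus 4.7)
The plan is to reduce the intertwining claim to a verification on generators of the braid group, and then to perform an explicit local calculation for a single crossing. The annular braid group $ABr_{2n}$ is generated by the adjacent half-twists $\sigma_i$ (together with, if needed, a rotation around the hole); since both $\rho_W$ and $\rho_A$ are group homomorphisms and \eqref{five authors result} is a fixed exact functor, it is enough to verify that on each generator the two autoequivalences agree up to natural isomorphism on the image of the embedding.

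For a single $\sigma_i$, only the two adjacent red points $a_i$, $a_{i+1}$ move, so the monodromy is local: it is supported near the codimension-one discriminant $\{a_i = a_{i+1}\}$ in the parameter space of $\mathbf{a}$. Near this locus, the superpotential $\mathcal{W}_{\mathbf{a}}$ should separate into a contribution from the colliding pair and a contribution from the other (frozen) red points, and $\rho_A(\sigma_i)$ should then manifest as a spherical twist along a vanishing Lagrangian cycle. The task is to match this twist, applied to the Lagrangians in the image of \eqref{five authors result} supplied by \cite{ADLSZ}, against Webster's definition of $\rho_W(\sigma_i)$ as a Rouquier-type two-term complex of bimodules over $\mathcal{A}^{cyl}$ built from the cap/cup idempotents of the cylindrical quiver Hecke algebra.

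Concretely, on the generating set of indecomposable projective modules for $\mathcal{A}^{cyl}$, Webster's Rouquier complex takes a very explicit form. I would compute the image of each such projective under \eqref{five authors result}, apply the symplectic parallel transport to its associated Lagrangian, and exhibit an exact triangle identifying the resulting mapping cone with the image of the bimodule complex. I expect the main obstacle to be exactly this step: pinning down the vanishing cycle precisely, recognizing that the induced cone replicates Webster's idempotent combinatorics, and keeping careful track of both the cohomological and quantum gradings — together with signs over $\mathbb{Z}$ — which is what makes the integral, bigraded version of the statement nontrivial rather than a routine consequence of a characteristic-zero matching.

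Finally, the identification $\cup_W^n \mapsto \cup_A^n$ should be handled separately, but essentially by inspection: both objects are assembled from the same pattern of nested cups between consecutive pairs of red points, and one should check that the Lagrangian prescription defining $\cup_A^n$ is precisely what \eqref{five authors result} produces from Webster's combinatorial cap. Once the intertwining on generators and this cap identification are in place, the Khovanov-homology statement is immediate by substituting into Webster's isomorphism \eqref{webster result}.
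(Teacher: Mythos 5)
Your outline reduces to generators correctly, but the core of your plan --- realize $\rho_A(\sigma_i)$ as a spherical twist along a vanishing cycle near the discriminant $\{a_i=a_{i+1}\}$, then match the resulting cone, applied to each Lagrangian in the image of the embedding, against Webster's Rouquier-type complex --- is precisely the step that is not established and that the paper deliberately avoids. Nothing in \cite{ADLSZ} identifies the monodromy with a spherical twist in this Fukaya--Seidel setting, and proving such a description (identifying the vanishing Lagrangian in $\mathcal{M}^\times$, verifying the twist formula with the superpotential and stops present) would be a substantial project in itself. Moreover, even granting a geometric description of $\rho_A(\sigma_i)$, applying it directly to the generating objects $T_\theta$ and resolving naively (via the surgery/cone procedure of Corollary \ref{weak generation}) produces a complex of size $3^n$ whose maps are hard to pin down, whereas Webster's complex has $n+1$ terms; the paper explicitly flags this mismatch and sidesteps it by the key innovation you are missing: the resolution $\Lambda_n$ of $\theta^n$ built from the standard-module-type objects $\upsilon_+\times\theta_+^{n-1}$ (Sections \ref{2 red 1 black}--\ref{more lambda object}), whose geometric incarnations $U_+\times T_+^{n-1}$ are carried by the half-twist diffeomorphism \emph{tautologically} to $U_-\times T_+^{n-1}$, so that no twist formula or cone manipulation is needed on the symplectic side, while Proposition \ref{L_n braiding} gives the matching combinatorial statement on the Webster side.

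Two further steps are treated too lightly. First, agreement on objects does not yet give an intertwining: one must check compatibility with morphisms and assemble a natural isomorphism. The paper does this not by chasing signs through a twist computation but by a rigidity argument (Lemma \ref{morphism uniqueness} and Corollary \ref{cor:automorphisms}), exploiting the auxiliary $u$-, $\hbar$-, $J$- and $C_1$-gradings and the fact that Hom spaces are concentrated in degree zero, so that any graded automorphism fixing a small amount of data is naturally isomorphic to the identity up to signs; your proposal has no mechanism playing this role. Second, the cup identification is not ``by inspection'': $\cup_W^n$ is a simple module, not a projective, so its image under the embedding is not given directly; the paper characterizes $S_{\Pi(n)}$ by its Hom pairings and then verifies geometrically, by an explicit disk count in the cylindrical model, that the figure-eight Lagrangian $E_{\Pi(n)}$ has the same pairings (and separately that it agrees with Aganagic's $I$-type cap). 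So while your reduction-to-generators frame is sound, the proposal as written lacks the two ideas the proof actually turns on and would run into exactly the obstacles the paper's method was designed to circumvent.
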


We note that Theorem \ref{thm: intertwining} is valid over $\mathbb{Z}$.\footnote{Webster's construction of the isomorphism between Hom spaces and Khovanov homology was constructed in  \cite{webster}, which  officially took coefficients in a field (of arbitrary characteristic).  However, the construction of the morphism there is valid over the integers, and such a morphism can be checked to be an isomorphism after base change to every field.  We thank Webster for this clarification.}  By contrast, at present, the Seidel-Smith construction is only known to recover Khovanov homology over $\mathbb{Q}$ \cite{abouzaid-smith-arc, abouzaid-smith:khovanov}.

Our proof of Theorem \ref{thm: intertwining} is essentially a direct calculation, using the tools of \cite{ADLSZ}.  
Let us make some brief comments here on the proof.  First, since the category $\mathcal{A}^{cyl}(\Gamma, \vec{d}, \vec{m})$
consists of a collection of objects whose morphism spaces are always in  degree zero, to show that two braid group actions on the module category agree amounts to constructing isomorphisms $\rho_W(\beta)(X) \cong \rho_A(\beta)(X)$ for the (Yoneda modules of) said objects, and  checking that the effect on morphisms (which can be compared via said  isomorphisms) is the same -- as opposed to constructing a tower of further structures.  In fact, we will largely avoid even computing the effect on morphisms by using various auxiliary gradings to constrain possible automorphisms: the key result here is Corollary \ref{cor:automorphisms}.   As far as determining the action on objects is concerned, the main innovation here is to find a convenient resolution of the generating objects, for which the effect of braiding is comparably simple with respect to both $\rho_W$ and $\rho_A$.\footnote{Had we taken a more direct approach (directly apply $\rho_A$ to generating objects and resolve the result in the most naive way, i.e. by the procedure of Corollary \ref{weak generation}), then for each $n$, we would obtain a complex of size $3^{n}$ on the $\rho_A$ side (not all of whose maps we are entirely certain we can rigorously determine) whereas, on the $\rho_W$ side, the expected complex has $n+1$ terms, and thus we we would be left with the problem of identifying a large acyclic subcomplex.}  

Producing this resolution and determining the action of braiding is the most effortful part of this article, occupying Sections \ref{2 red 1 black}, \ref{Lambda object}, \ref{zeroes}, \ref{more lambda object}.  We then establish the intertwining of braid group actions in Section \ref{sec: braiding}, and conclude with a discussion of cups and caps in Section \ref{sec: cupcap}.
The remaining Sections are \ref{sec: KLRW} and \ref{sec: ADLSZ}, which review the relevant notions about KLRW algebra (from \cite{webster, Webster-weighted, webster2016tensor}) and Fukaya categories of Coulomb branches (from \cite{ADLSZ}), respectively, and  \ref{sec: conesurgery}, which records how the general `cone over a Reeb chord is surgery at infinity' prescription of e.g. \cite{GPS2} looks from the point of view of the cylindrical model of \cite{ADLSZ}. 

\vspace{2mm}

While our proof proceeds by direct computation, let us mention that Aganagic has previously sketched in   \cite[Sec. 8]{aganagic-knot-2} an approach to establishing such intertwinings by arguing that the action is uniquely characterized by certain perverse data in the sense of \cite{Chuang-Rouquier}.

\vspace{2mm}
{\bf Acknowledgements.} This article stems from ongoing joint work with Mina Aganagic and Peng Zhou, who we thank for many ideas and helpful discussions.  We also thank Ben Webster.
VS is supported by the
Villum Fonden Villum Investigator grant 37814, Novo Nordisk Foundation grant
NNF20OC0066298, and Danish National Research Foundation grant DNRF157. 

\section{KLRW diagrammatics} \label{sec: KLRW}

We recall in this section the diagrammatics of Khovanov-Lauda-Rouquier-Webster algebras, originally developed in \cite{Khovanov-Lauda-diagrammatics-1, Khovanov-Lauda-diagrammatics-2, Rouquier-2kac, Webster-weighted}. 

\subsection{Generators and relations}

\begin{figure}[ht]
    \centering
     \begin{subfigure}[b]{0.1\textwidth}
     \centering     
     \begin{tikzpicture}[very thick]
        \def\br{40}
        \draw (0,0) to [out=\br, in=-\br] node[pos=0, below ]{$(i)$} (0,2);
        \draw (0.5,0) to [out=180-\br, in=\br - 180] node[pos=0, below ]{$(i)$} (0.5,2);
        \node at (1,1) {$=0$};
        \end{tikzpicture}
        \caption{bigon}\label{fig:KLRW-bigon}
     \end{subfigure} 
     \hspace{2cm}
      \begin{subfigure}[b]{0.3\linewidth}
     \centering     
     \begin{tikzpicture}[very thick]
     \begin{scope}
        \def\br{40}
        \draw (0,0) to [out=\br, in=-\br] node[pos=0, below ]{$(j)$} (0,2);
        \draw (0.5,0) to [out=180-\br, in=\br - 180] node[pos=0, below ]{$(i)$} (0.5,2);
        \end{scope}
        \node at (1,1) {$= u $};
        \begin{scope}[shift=({1.5,0})]
        \def\br{40}
        \draw (0,0) to   node[pos=0, below ] {$(j)$} (0,2);
        \draw (0.5,0) to node[pos=0.5, fill, circle, inner sep=2pt]{} node[pos=0, below ]{$(i)$} (0.5,2);
        \end{scope}
        \node at (2.5,1) {$- u $};
        \begin{scope}[shift=({3,0})]
        \def\br{40}
        \draw (0,0) to  node[pos=0.5, fill, circle, inner sep=2pt]{} node[pos=0, below ] {$(j)$} (0,2);
        \draw (0.5,0) to   node[pos=0, below ]{$(i)$} (0.5,2);
        \end{scope}
        \end{tikzpicture}
        \caption{bigon for  $(j) \to (i)$ }
     \label{fig:KLRW-ij}\end{subfigure} 
     \hspace{2cm}
    \begin{subfigure}[b]{0.2\textwidth}
     \centering     
     \begin{tikzpicture}[very thick]
        \def\br{20}
        \begin{scope}
        \draw[red, densely dotted] (0,0) to node[pos=0, below ]{$[i]$} (0,2);
        \draw (0.4,0) to [out=90, in=-90] node[pos=0, below ]{$(i)$} (-0.4,1);
        \draw (-0.4, 1)  to [out=90, in=-90]  (0.4, 2);
        \end{scope}
        \node at (0.7,1) {$=\; u$};
        \begin{scope}[shift =({1.5,0})]
        \draw[red, densely dotted] (0,0) to node[pos=0, below ]{$[i]$} (0,2);
        \draw (0.5,0) to node[pos=0.5, fill, circle, inner sep=2pt]{}  node[pos=0, below ]{$(i)$} (0.5,2);
        \end{scope}
        \end{tikzpicture}
        \caption{bigon with red}\label{fig:KLRW-bigon-red}
     \end{subfigure}
     \vskip 1cm
          \begin{subfigure}[b]{0.4\linewidth}
     \centering     
       \begin{tikzpicture}[very thick]
        \def\br{40}
        \begin{scope}
        \draw (0,0) -- node[pos=0, below ]{$(i)$}  (1,2);
        \draw (1,0) -- node[pos=0, below ]{$(i)$}  (0,2);
        \draw (0.5,0) to [bend left] node[pos=0, below ]{$(j)$} (0.5,2);
        \end{scope}
        \node at (1.5,1) {$-$};
        \begin{scope}[shift=({2,0})]
        \draw (0,0) -- node[pos=0, below ]{$(i)$}  (1,2);
        \draw (1,0) -- node[pos=0, below ]{$(i)$}  (0,2);
         \draw (0.5,0) to [bend right] node[pos=0, below ]{$(j)$} (0.5,2);
        \end{scope}
 \node at (3.5,1) {$= u \hbar $};
 \begin{scope}[shift=({4,0})]
        \draw (0.1,0) -- node[pos=0, below ]{$(i)$}  (0.1,2);
        \draw (1.1,0) -- node[pos=0, below ]{$(i)$}  (1.1,2);
         \draw (0.6,0) to node[pos=0, below ]{$(j)$} (0.6,2);
        \end{scope}
        \end{tikzpicture}
        \caption{braid with neighbour $(j) \to (i)$.}
     \label{fig:KLRW-jij}\end{subfigure} 
     \hspace{2cm}
         \begin{subfigure}[b]{0.3\textwidth}
     \centering     
       \begin{tikzpicture}[very thick]
        \def\br{40}
        \begin{scope}
        \draw (0,0) to [bend right]  node[pos=0, below ]{$(i)$}  (1,2);
        \draw (1,0) to [bend right] node[pos=0, below ]{$(i)$}  (0,2);
        \draw[red, densely dotted] (0.5,0) to node[pos=0, below ]{$[i]$} (0.5,2);
        \end{scope}
        \node at (1.5,1) {$-$};
        \begin{scope}[shift=({2,0})]
        \draw (0,0) to [bend left] node[pos=0, below ]{$(i)$}  (1,2);
        \draw (1,0) to [bend left] node[pos=0, below ]{$(i)$}  (0,2);
        \draw[red, densely dotted] (0.5,0) to node[pos=0, below ]{$[i]$} (0.5,2);
        \end{scope}
 \node at (3.5,1) {$=u \hbar $};
 \begin{scope}[shift=({4.1,0})]
        \draw (0,0) -- node[pos=0, below ]{$(i)$}  (0,2);
        \draw (1,0) -- node[pos=0, below ]{$(i)$}  (1,2);
        \draw[red, densely dotted] (0.5,0) to node[pos=0, below ]{$[i]$} (0.5,2);
        \end{scope}
        \end{tikzpicture}
        \caption{braid with red}\label{fig:KLRW-braid-red}
     \end{subfigure} 
  \vskip 1cm
          \begin{subfigure}[b]{0.4\textwidth}
     \centering     
       \begin{tikzpicture}[very thick]
        \def\br{40}
        \begin{scope}
        \draw (0,0) -- node[pos=0, below ]{$(i)$}  (1,2);
        \draw (1,0) -- node[pos=0, below ]{$(i)$} node[pos=0.8, inner sep=2pt, circle, fill]{}  (0,2);
        \end{scope}
        \node at (1.5,1) {$-$};
        \begin{scope}[shift=({2,0})]
        \draw (0,0) -- node[pos=0, below ]{$(i)$}  (1,2);
        \draw (1,0) -- node[pos=0, below ]{$(i)$}  (0,2);
         \draw (1,0) -- node[pos=0, below ]{$(i)$} node[pos=0.2, inner sep=2pt, circle, fill]{}  (0,2);
        \end{scope}
        \node at (3.5,1) {$= \;\; \hbar $};
         \begin{scope}[shift=({4.3,0})]
        \draw (0,0) -- node[pos=0, below ]{$(i)$}  (0,2);
        \draw (1,0) -- node[pos=0, below ]{$(i)$}  (1,2);
        \end{scope}
        \end{tikzpicture}
        \caption{dot-pass-crossing}\label{fig:KLRW-skein-1}
     \end{subfigure} \hfill
         \begin{subfigure}[b]{0.4\textwidth}
     \centering     
       \begin{tikzpicture}[very thick]
        \def\br{40}
        \begin{scope}
        \draw (0,0) -- node[pos=0.2, inner sep=2pt, circle, fill]{}  node[pos=0, below ]{$(i)$}  (1,2);
        \draw (1,0) -- node[pos=0, below ]{$(i)$}  (0,2);
        \end{scope}
        \node at (1.5,1) {$-$};
        \begin{scope}[shift=({2,0})]
        \draw (0,0) -- node[pos=0.8, inner sep=2pt, circle, fill]{}  node[pos=0, below ]{$(i)$}  (1,2);
        \draw (1,0) -- node[pos=0, below ]{$(i)$}  (0,2);
         \draw (1,0) -- node[pos=0, below ]{$(i)$}  (0,2);
        \end{scope}
        \node at (3.5,1) {$= \;\; \hbar $};
         \begin{scope}[shift=({4.3,0})]
        \draw (0,0) -- node[pos=0, below ]{$(i)$}  (0,2);
        \draw (1,0) -- node[pos=0, below ]{$(i)$}  (1,2);
        \end{scope}
        \end{tikzpicture}
        \caption{another dot-pass-crossing}\label{fig:KLRW-skein-2}
     \end{subfigure} 
         
    \caption{Nontrivial KLRW relations. Exchanging $i$ and $j$ in diagrams (b) and (d), i.e. if we have an arrow $(j) \gets (i)$, the right-hand-side gets an extra (-1) factor.  
    }
    \label{fig:KLRW} 
\end{figure}
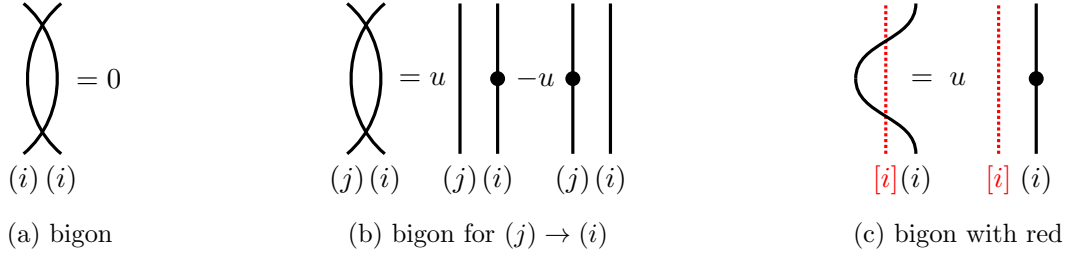
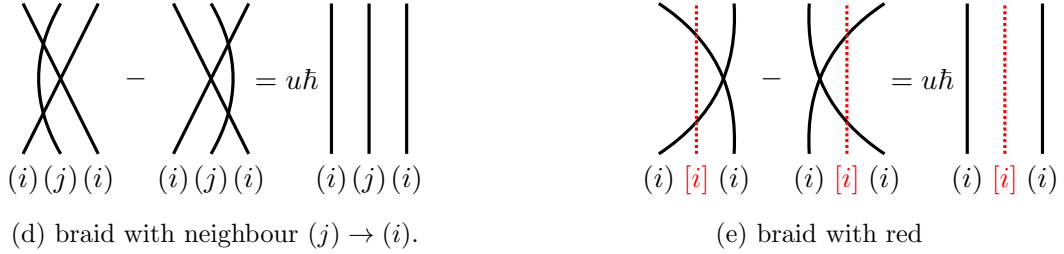
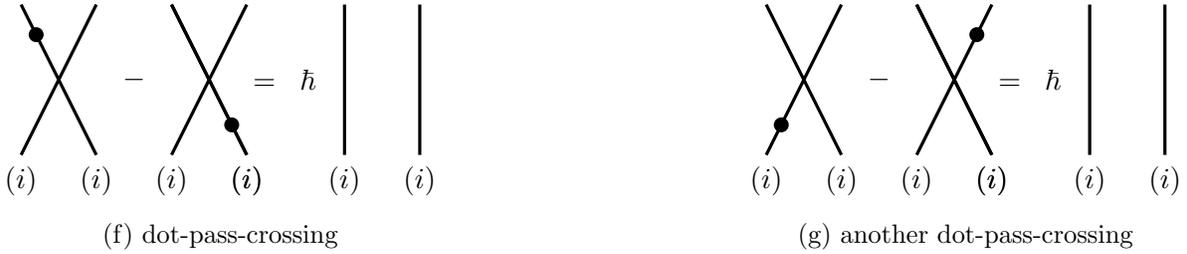

Let $\Gamma$ be a directed graph with  no multiple edges, or loops. Pick a natural number $d_i \ge 0$ for each vertex $i$ of $\Gamma$, and a collection (possibly empty) $F$ of points on a line, labeled by vertices of $\Gamma$.  In diagrams, we depict the points of $F$ as red.\footnote{Webster allows red points to be colored by arbitrary dominant weights \cite{webster}.  Here we restrict attention to the case of fundamental weights, which we identify with nodes of the Dynkin diagram.} The KLRW category 
$\mathcal{C}_{\Gamma, \vec{d}, F}$ is defined as follows: 

\begin{itemize}
    \item Objects are collections of 
    points on a line $\R$, all distinct
    from the points of $F$, with
    $d_i$ points labeled by the vertex $i$
    of $\Gamma$.  In diagrams, we depict
    these points as black. 
    \item Morphism spaces
    are generated by strand diagrams in
    the plane $\R \times [0,1]$, with no  horizontal or vertical
    tangencies, and no non-generic intersections. Black strands
    may be decorated by 
    dots.  
    \item 
    Composition $D_1 \circ D_2$ is given by stacking the diagram $D_1$ on top of $D_2$.
    \item 
    Diagrams are considered
    up to isotopy and satisfy relations
    in Figure \ref{fig:KLRW}. 
\end{itemize}

\begin{remark}
Any isotopy of black points (not crossing each other or red points) gives an isomorphism of the corresponding objects
of $\mathcal{C}_{\Gamma, \vec{d}, F}$, and the space of such isotopies is  contractible.\footnote{In the cylindrical case, one marks a point on the circle, and carrying an object across said point shifts a certain grading, see Sec. \ref{KLRW grading} below; in view of which the space of objects in a given isomorphism-of-degree-zero-with-respect-to-this-grading class remains contractible.}
Webster prefers to choose a representative 
object in each isotopy class, and work with the algebra $\mathcal{A}_{\Gamma, \vec{d}, F}$ given by the 
endomorphisms of the direct sum of these representatives.  
This is obviously equivalent to the presentation here, and we will not belabor the translations in what follows. 
\end{remark}

We write 
$\mathcal{C}^{cyl}_{\Gamma, \vec d, F}$ for the analogous structure with 
the line $\R$ replaced by a circle, and
the plane correspondingly replaced 
by a cylinder \cite{webster2022coherent}.

\subsection{Modules, bimodules, and braiding}
In general, for a category $C$,
we have the left modules 
$C-\module := \Hom(C^{op}, \Z-\module)$ and right modules $\module-C := \Hom(C, \Z-\module)$.  Here, by $\Z-\module$ we mean the stable $\infty$- (or equivalently dg derived) category of complexes of $\Z$-modules, localized along quasi-isomorphisms.  As such, all functors discussed below will be (automatically) `derived' and we correspondingly omit the $R$ or $L$ notations etc.

For categories $C, D$ one similarly defines bimodules as `bifunctors' $C^{op} \times D \to \Z-\module$.  
Rather than discuss what is a bifunctor, one can take as the definition
$$C - \module - D :=  \Hom(D, C-\module) =  \Hom(C^{op} , \module-D).$$  
We denote the tautological maps as 
\begin{eqnarray*}
  \module-C \times C -\module -D & \to & \module-D \\
  (c, b) \mapsto c \otimes_C b
\end{eqnarray*}
\begin{eqnarray*}
   C -\module -D \times D-\module& \to & C-\module \\
  (b, d) \mapsto b \otimes_D d
\end{eqnarray*} 
There is similarly an operation $C-\module-D \times D-\module-E \to C-\module-E$, which we denote $\otimes_D$.  
\vspace{2mm}

Let us return to considering KLRW categories.  The following discussion makes sense equally for the cylindrical and usual KLRW categories, so we omit the $cyl$ notations. The definition of such categories $\mathcal{C}$ asserts that the diagonal bimodule $\Hom(\theta', \theta)$ is given by the linear span of strand diagrams between with $\theta'$ at the top and $\theta$ at the bottom, modulo certain relations.  Or in other words, 
the Yoneda module 
$\Hom(\cdot, \theta): \mathcal{C}^{op} \to \Z-mod$ sends a configuration $\theta'$ to the linear span of all diagrams with $\theta'$ at the top and $\theta$ at the bottom.

One can describe more bimodules by introducing new elements in the diagrams, and imposing new relations. 

Of particular relevance to us are the braiding bimodules introduced by Webster in \cite{webster}.  
Fix a positive braid $\beta \in Br_{|F|}$ expressed as a word in positive crossings. 
We write $\mathscr{B}_\beta$ for the
$\mathcal{C}_{\Gamma, \vec d, F'} - \module - \mathcal{C}_{\Gamma, \vec d, F}$ bimodule
where $\mathscr{B}_\beta(\theta', \theta)$ is the formal linear span of all strand diagrams with $\theta$ at the bottom and $\theta'$ at the top, where now the red strands are allowed to move and cross so as to trace the projection of the braid --  modulo the KLRW relations shown in Figure \ref{fig:KLRW}, along with the additional braid relations in Figure \ref{fig:KLRW-braiding}.  
Bifunctoriality of the bimodule is given by stacking KLRW diagrams at the top and bottom.

\begin{figure}
    \centering
         \begin{subfigure}[b]{0.3\textwidth}
     \centering     
       \begin{tikzpicture}[very thick]
        \def\br{40}
        \begin{scope}
        \draw[red, densely dotted] (0,0) -- node[pos=0, below ]{$[i]$}  (1,2);
        \draw[red, densely dotted] (1,0) -- node[pos=0, below ]{$[k]$}  (0,2);
        \draw (0.5,0) to [bend left] node[pos=0, below ]{$(j)$} (0.5,2);
        \end{scope}
        \node at (1.5,1) {$=$};
        \begin{scope}[shift=({2,0})]
        \draw[red, densely dotted] (0,0) -- node[pos=0, below ]{$[i]$}  (1,2);
        \draw[red, densely dotted] (1,0) -- node[pos=0, below ]{$[k]$}  (0,2);
         \draw (0.5,0) to [bend right] node[pos=0, below ]{$(j)$} (0.5,2);
        \end{scope}
        \end{tikzpicture}
        \caption{}
     \end{subfigure} 
     \hspace{0.5cm}
         \begin{subfigure}[b]{0.3\textwidth}
     \centering     
       \begin{tikzpicture}[very thick]
        \def\br{40}
        \begin{scope}
        \draw[red, densely dotted] (0,0) -- node[pos=0, below ]{$[i]$}  (1,2);
        \draw[red, densely dotted] (1,0) -- node[pos=0, below ]{$[j]$}  (0,2);
        \draw (1.5,0) to [bend left] node[pos=0, below ]{$(k)$} (-0.5,2);
        \end{scope}
        \node at (1.5,1) {$=$};
        \begin{scope}[shift=({2,0})]
        \draw[red, densely dotted] (0,0) -- node[pos=0, below ]{$[i]$}  (1,2);
        \draw[red, densely dotted] (1,0) -- node[pos=0, below ]{$[j]$}  (0,2);
        \draw (1.5,0) to [bend right] node[pos=0, below ]{$(k)$} (-0.5,2);
        \end{scope}
        \end{tikzpicture}
        \caption{}
     \end{subfigure} 
  \hspace{0.5cm}
  \begin{subfigure}[b]{0.3\textwidth}
     \centering     
       \begin{tikzpicture}[very thick]
        \def\br{40}
        \begin{scope}
        \draw[red, densely dotted] (0,0) -- node[pos=0, below ]{$[i]$}  (1,2);
        \draw[red, densely dotted] (1,0) -- node[pos=0, below ]{$[k]$}  (0,2);
        \draw[red, densely dotted] (0.5,0) to [bend left] node[pos=0, below ]{$[j]$} (0.5,2);
        \end{scope}
        \node at (1.5,1) {$=$};
        \begin{scope}[shift=({2,0})]
        \draw[red, densely dotted] (0,0) -- node[pos=0, below ]{$[i]$}  (1,2);
        \draw[red, densely dotted] (1,0) -- node[pos=0, below ]{$[k]$}  (0,2);
        \draw[red, densely dotted] (0.5,0) to [bend right] node[pos=0, below ]{$[j]$} (0.5,2);
        \end{scope}
        \end{tikzpicture}
        \caption{}
     \end{subfigure}         
    \caption{}\label{fig:KLRW-braiding}
\end{figure}

It is essentially immediate from the definition that 
$$\mathcal{B}_{\beta' \beta} = \mathcal{B}_{\beta'} \otimes_{\mathcal{C}_{\Gamma, \vec d, F'}} \mathcal{B}_{\beta},$$
Webster ``checks the braid relations'' by constructing a canonical representative for the braid associated to each positive word, showing that the above defines a representation of the positive braid monoid into bimodules.  Webster proves that the bimodules are in fact perfect, so in particular the functor
\begin{eqnarray*}
    \mathbb{B}'_\beta: \mathcal{C}_{\Gamma, \vec d, F} & \to & \mathcal{C}_{\Gamma, \vec d, F'}-\module \\
    \theta & \mapsto & \mathcal{B}_\beta \otimes_{\mathcal{C}_{\Gamma, \vec d, F}} \theta 
\end{eqnarray*}
in fact lands in 
$\mathcal{C}_{\Gamma, \vec d, F'}-\mathrm{perf}$, i.e., the image of every object under this functor admits a finite resolution by Yoneda representatives of objects in $\mathcal{C}_{\Gamma, \vec d, F'}$.  We write 
$\mathbb{B}_\beta$ for the induced endofunctor on $\mathcal{C}_{\Gamma, \vec d, F'}-\mathrm{perf}$; varying over $\beta$, we have an action of the positive braid monoid on $\mathcal{C}_{\Gamma, \vec d, F'}-\mathrm{perf}$.  
Webster also proves that the $\mathbb{B}_\beta$ are invertible, and hence that the positive braid monoid action extends to an action of the  braid group.

We turn to  bimodules which compare categories for different numbers of red points, eventually to be associated to the `cups' and `caps' in plat diagrams.  Suppose in a configuration $F$ of red points, there is a pair $\pi$ of two adjacent red points with dual labels $i$ and $i^*$. 
We write $F \setminus \pi$ for the red point configuration obtained by removing these two points.  If $\vec d = (d_1, d_2, \ldots)$, we write $\vec d - 1_i := (d_1, d_2, \ldots, d_i-1, \ldots)$. 

There is a certain bimodule $\mathfrak{R}_\pi \in \mathcal{C}_{\Gamma, \vec d, F}-\module-\mathcal{C}_{\Gamma, \vec d - 1_i, F \setminus \pi}$.  
As usual, 
$\mathfrak{R}_\pi$ is defined taking
$\mathfrak{R}_\pi(\theta', \theta)$ 
to the be the space spanned by certain diagrams, modulo certain relations.  The diagrams are described as follows.  Above a certain horizontal strip, they are $\mathcal{C}_{\Gamma, \vec d, F}$ diagrams.  Below the strip they are $\mathcal{C}_{\Gamma, \vec d - 1_i, F \setminus \pi}$ diagrams.  On the strip, 
the diagram should have  only straight vertical red and black lines save in the neighborhood of the pair $\pi$.  

The main content in defining this bimodule concerns what to do in this neighborhood.  The general description is somewhat involved; see \cite[Sec. 7]{webster}.  We will restrict attention to the case when 
$\Gamma = \bullet$.  In this case the description is that in the horizontal strip, all lines are vertical save in the region containing the pair $\pi$, where the diagram must look like: 
\begin{equation}
	\vcenter{\hbox{\includegraphics{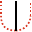}}}
\end{equation}
The relations are 
\begin{subequations}
\begin{align}
	\vcenter{\hbox{\begin{tikzpicture}[scale=.5]
	\begin{pgfonlayer}{nodelayer}
		\node [style=none] (0) at (-0.5, 0.5) {};
		\node [style=none] (1) at (0.5, 0.5) {};
		\node [style=none] (2) at (-0.5, 0) {};
		\node [style=none] (3) at (0.5, 0) {};
		\node [style=none] (4) at (0, -0.5) {};
		\node [style=none] (5) at (-0.75, 0.5) {};
	\end{pgfonlayer}
	\begin{pgfonlayer}{edgelayer}
		\draw [style=su2 puncture strand] (0.center) to (2.center);
		\draw [style=su2 puncture strand, bend right=45, looseness=1.25] (2.center) to (4.center);
		\draw [style=su2 puncture strand, bend right=45] (4.center) to (3.center);
		\draw [style=su2 puncture strand] (3.center) to (1.center);
		\draw [style=brane, in=90, out=-90] (5.center) to (4.center);
	\end{pgfonlayer}
\end{tikzpicture}}} &=0, \\
	\vcenter{\hbox{\begin{tikzpicture}[scale=.5]
	\begin{pgfonlayer}{nodelayer}
		\node [style=none] (0) at (-0.5, 0.5) {};
		\node [style=none] (1) at (0.5, 0.5) {};
		\node [style=none] (2) at (-0.5, 0) {};
		\node [style=none] (3) at (0.5, 0) {};
		\node [style=none] (4) at (0, -0.5) {};
		\node [style=none] (5) at (0.75, 0.5) {};
	\end{pgfonlayer}
	\begin{pgfonlayer}{edgelayer}
		\draw [style=su2 puncture strand] (0.center) to (2.center);
		\draw [style=su2 puncture strand, bend right=45, looseness=1.25] (2.center) to (4.center);
		\draw [style=su2 puncture strand, bend right=45] (4.center) to (3.center);
		\draw [style=su2 puncture strand] (3.center) to (1.center);
		\draw [style=brane, in=90, out=-90] (5.center) to (4.center);
	\end{pgfonlayer}
\end{tikzpicture}}} &=0, \\
	\vcenter{\hbox{\begin{tikzpicture}[scale=.5]
	\begin{pgfonlayer}{nodelayer}
		\node [style=none] (0) at (-0.5, 0.5) {};
		\node [style=none] (1) at (0.5, 0.5) {};
		\node [style=none] (2) at (-0.5, 0) {};
		\node [style=none] (3) at (0.5, 0) {};
		\node [style=none] (4) at (0, -0.5) {};
		\node [style=none] (5) at (0, 0.5) {};
		\node [style=none] (6) at (1, -0.5) {};
		\node [style=none] (7) at (-1, 0.5) {};
		\node [style=none] (8) at (1, -1) {};
	\end{pgfonlayer}
	\begin{pgfonlayer}{edgelayer}
		\draw [style=su2 puncture strand] (0.center) to (2.center);
		\draw [style=su2 puncture strand, bend right=45, looseness=1.25] (2.center) to (4.center);
		\draw [style=su2 puncture strand, bend right=45] (4.center) to (3.center);
		\draw [style=su2 puncture strand] (3.center) to (1.center);
		\draw [style=brane, in=90, out=-90] (5.center) to (4.center);
		\draw [style=brane, in=-90, out=90, looseness=0.75] (6.center) to (7.center);
		\draw [style=brane] (6.center) to (8.center);
	\end{pgfonlayer}
\end{tikzpicture}}} &= u\hbar\, \vcenter{\hbox{\begin{tikzpicture}[scale=.5]
	\begin{pgfonlayer}{nodelayer}
		\node [style=none] (0) at (-0.5, 0.5) {};
		\node [style=none] (1) at (0.5, 0.5) {};
		\node [style=none] (2) at (-0.5, 0) {};
		\node [style=none] (3) at (0.5, 0) {};
		\node [style=none] (4) at (0, -0.5) {};
		\node [style=none] (5) at (0, 0.5) {};
		\node [style=none] (6) at (1, -1) {};
		\node [style=none] (7) at (-1, -0.25) {};
		\node [style=none] (8) at (-1, 0.5) {};
	\end{pgfonlayer}
	\begin{pgfonlayer}{edgelayer}
		\draw [style=su2 puncture strand] (0.center) to (2.center);
		\draw [style=su2 puncture strand, bend right=45, looseness=1.25] (2.center) to (4.center);
		\draw [style=su2 puncture strand, bend right=45] (4.center) to (3.center);
		\draw [style=su2 puncture strand] (3.center) to (1.center);
		\draw [style=brane, in=90, out=-90] (5.center) to (4.center);
		\draw [style=brane, in=-90, out=90, looseness=0.75] (6.center) to (7.center);
		\draw [style=brane] (8.center) to (7.center);
	\end{pgfonlayer}
\end{tikzpicture}}}, \\
	\vcenter{\hbox{\begin{tikzpicture}[scale=.5]
	\begin{pgfonlayer}{nodelayer}
		\node [style=none] (0) at (-0.5, 0.5) {};
		\node [style=none] (1) at (0.5, 0.5) {};
		\node [style=none] (2) at (-0.5, 0) {};
		\node [style=none] (3) at (0.5, 0) {};
		\node [style=none] (4) at (0, -0.5) {};
		\node [style=none] (5) at (0, 0.5) {};
		\node [style=none] (6) at (-1, -0.5) {};
		\node [style=none] (7) at (1, 0.5) {};
		\node [style=none] (8) at (-1, -1) {};
	\end{pgfonlayer}
	\begin{pgfonlayer}{edgelayer}
		\draw [style=su2 puncture strand] (0.center) to (2.center);
		\draw [style=su2 puncture strand, bend right=45, looseness=1.25] (2.center) to (4.center);
		\draw [style=su2 puncture strand, bend right=45] (4.center) to (3.center);
		\draw [style=su2 puncture strand] (3.center) to (1.center);
		\draw [style=brane, in=90, out=-90] (5.center) to (4.center);
		\draw [style=brane, in=-90, out=90, looseness=0.75] (6.center) to (7.center);
		\draw [style=brane] (6.center) to (8.center);
	\end{pgfonlayer}
\end{tikzpicture}}} &= -u\hbar\, \vcenter{\hbox{\begin{tikzpicture}[scale=.5]
	\begin{pgfonlayer}{nodelayer}
		\node [style=none] (0) at (-0.5, 0.5) {};
		\node [style=none] (1) at (0.5, 0.5) {};
		\node [style=none] (2) at (-0.5, 0) {};
		\node [style=none] (3) at (0.5, 0) {};
		\node [style=none] (4) at (0, -0.5) {};
		\node [style=none] (5) at (0, 0.5) {};
		\node [style=none] (6) at (-1, -1) {};
		\node [style=none] (7) at (1, -0.25) {};
		\node [style=none] (8) at (1, 0.5) {};
	\end{pgfonlayer}
	\begin{pgfonlayer}{edgelayer}
		\draw [style=su2 puncture strand] (0.center) to (2.center);
		\draw [style=su2 puncture strand, bend right=45, looseness=1.25] (2.center) to (4.center);
		\draw [style=su2 puncture strand, bend right=45] (4.center) to (3.center);
		\draw [style=su2 puncture strand] (3.center) to (1.center);
		\draw [style=brane, in=90, out=-90] (5.center) to (4.center);
		\draw [style=brane, in=-90, out=90, looseness=0.75] (6.center) to (7.center);
		\draw [style=brane] (8.center) to (7.center);
	\end{pgfonlayer}
\end{tikzpicture}}}.
\end{align}
\end{subequations}

Webster proves that the bimodule is perfect; we write $\mathbb{R}_\pi$ the induced functor on perfect module categories.  Moreover, writing $\tau(\pi)$ for the positive braid exchanging the two red strands associated to $\pi$, Webster
shows that $\mathbb{R}_\pi$ has a left adjoint $\mathbb{R}_\pi^*$, and establishes an exact triangle: \cite{webster2016tensor}
\begin{equation} \label{webster exact triangle} 1 \xrightarrow{v} \mathbb{R}_\pi \circ \mathbb{R}_\pi^* \to \mathbb{B}_{\tau(\pi)} \xrightarrow{[1]} \end{equation}
where $v$ is the composition of the cap and cup operations. 

\subsection{Gradings} \label{KLRW grading}

There are additional gradings on the KLRW algebra, arising from rescaling elements of the algebra in a way that is consistent with the relations. In general, we can rescale each type of dot, the red-black crossings for each $i$, and the black-black crossings for each pair $i,j$ separately.

For the $\mathfrak{sl}_2$ case, this means we can rescale the red-black crossings, the black-black crossing, and the dot separately. Denote their rescaling gradings as $(1,0,0)$, $(0,1,0)$, and $(0,0,1)$, respectively. Using the relations, we find that the parameter $u$ must scale as $(2,0,-1)$ and $\hbar$ must scale as $(0,1,1)$. Rescaling by $(\frac{1}{2},0,1)$ does not rescale $u$, and rescaling by $(0,1,-1)$ does not rescale $\hbar$. Call the gradings under these rescalings the $u$- and $\hbar$-gradings. Rescaling by $(1,-2,2)$ leaves both $u$ and $\hbar$ invariant. We refer to this as the J grading, as it eventually gives rise to the q's in the Jones polynomial.  Braiding preserves the $u$-, $\hbar$-, and J gradings separately.
(In general, one can introduce independent $u, \hbar$ parameters and gradings for each node of $\Gamma$.)

The cylindrical KLRW algebra has additional gradings associated to wrapping a black strand labeled by $(i)$ around the back of the cylinder; we will call it the $C_i$ grading. To define this grading, pick a point $\varphi \in S^1$ away from the red and black points. If a black strand labeled by $(i)$ crosses $\varphi$ from right to left, the $C_i$ degree changes by $+1$, and if it crosses from left to right, $C_i$ changes by $-1$.  

We will use the gradings to constrain morphisms, in particular through the following results.  (We state the results for $\Gamma = \bullet$, which is the case we will use, but analogous results hold for general $\Gamma$, using the additional gradings mentioned above.) 

\begin{lemma} \label{morphism uniqueness}
    A morphism with a single crossing of a red and black strands and no other crossings or dots is the only morphism in that degree for that source and target, up to a scalar.
\end{lemma}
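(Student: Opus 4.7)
The plan is to use the $u$-, $\hbar$-, and J-gradings introduced just above to constrain any candidate morphism combinatorially. Since the KLRW algebra is graded by each of these, the morphism space between any two fixed objects decomposes into graded pieces, and it suffices to show that any individual monomial $c\, u^{a} \hbar^{b} D$ (with $c \in \Z$, $a, b \geq 0$, and $D$ a diagram in the pictorial sense) lying in the multi-degree of the single red-black crossing $D_0$ must have $D$ equal (as a diagram, up to isotopy) to $D_0$ and $a = b = 0$.

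The degrees of the generators read off from the rescalings of Section \ref{KLRW grading} are: a red-black crossing has $(u, \hbar, J) = (\tfrac{1}{2}, 0, 1)$, a black-black crossing $(0, 1, -2)$, a dot $(1, -1, 2)$, while the parameters $u, \hbar$ themselves have multi-degrees $(0, 1, 0)$ and $(1, 0, 0)$. Consequently a diagram $D$ with $N$ red-black crossings, $B$ black-black crossings, and $d$ dots, scaled by $u^{a}\hbar^{b}$, has multi-degree
\begin{align*}
\bigl(\tfrac{N}{2} + d + b,\ \; B - d + a,\ \; N - 2B + 2d\bigr).
\end{align*}
Topologically, the source and target of $D_0$ differ precisely by swapping one adjacent black $(i)$ and red $[i]$ pair, so any diagram between them has $N \geq 1$ and in fact $N$ is odd. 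All the counts $N, B, d, a, b$ are non-negative integers (or half-integers for $N/2$). Equating the displayed triple with $(\tfrac{1}{2}, 0, 1)$ and using these positivity constraints forces, step by step, $N = 1$, then $d = b = 0$, and finally $B = a = 0$.

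It remains to observe that a diagram with exactly one red-black crossing and no other crossings or dots is unique up to the planar isotopies built into the KLRW relations: all strands other than the two swapping ones connect their endpoints by straight vertical lines crossing nothing. Hence every monomial of the prescribed multi-degree representing a morphism between the given source and target is a scalar multiple of $D_0$, and the morphism space in that degree is one-dimensional. The main points to verify are the generator degrees (an immediate bookkeeping exercise with the rescalings) and the uniqueness of the minimal-crossing diagram — the latter is straightforward once one notes that the remaining strands have nothing to cross and no dots to carry.
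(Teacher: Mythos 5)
Your grading bookkeeping is correct and matches what the paper does in words: the generator degrees you list agree with Section \ref{KLRW grading}, and the positivity argument forcing $N=1$, then $d=b=0$, then $B=a=0$ is exactly the intended use of the $u$-, $\hbar$- and J-gradings. (Your parenthetical claim that $N$ is odd is neither needed nor true in general on the cylinder, but nothing depends on it.)

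The gap is in your final step. The lemma lives in the cylindrical KLRW category, and there a diagram with a single red--black crossing and no dots is \emph{not} unique up to isotopy for a fixed top and bottom: the black strand may instead wind around the back of the cylinder. Concretely, in the two-red/one-black setting of Section \ref{2 red 1 black}, $\Hom(\theta,\theta_+)$ contains both the short diagram crossing the right red strand once and the diagram in which the black strand crosses the \emph{left} red strand once and travels around the back; both have exactly one red--black crossing, no black--black crossings and no dots, hence the same $(u,\hbar,J)$-degree $(\tfrac{1}{2},0,1)$, yet they are linearly independent. So the $(u,\hbar,J)$-graded piece need not be one-dimensional, and your closing claim that ``all strands other than the two swapping ones are vertical lines crossing nothing'' is only valid in the planar category. (When many other red or black strands are present, the winding diagrams do pick up extra crossings and are killed by your grading argument, but in the small configurations that the paper actually uses this fails.) The paper closes this by also invoking the winding grading $C_1$ of Section \ref{KLRW grading}: ``that degree'' in the statement includes the $C_1$-degree, and any single-crossing diagram differing from the reference one by winding past the marked point $\varphi$ has a different $C_1$-degree, so within the full multidegree the diagram is unique. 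Your argument becomes complete once you add this observation.
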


\begin{proof}
    Such a morphism has J degree $1$, $C_1$ degree 0, $u$-degree $\frac{1}{2}$ and $\hbar$-degree 0. Any morphism with the same source and target must have at least one red-black crossing. Adding dots will increase the $u$-degree, and no diagrams or parameters decrease the $u$-degree, so we cannot add any dots. Similarly, any additional crossings will increase the $u$- and $\hbar$-degrees, and no diagrams (except dots) or parameters decrease these degrees, so any morphism with the same source and target must have exactly one red-black crossing. There is only one way to draw a strand diagram for a fixed top and bottom and a single red-black crossing without winding around the back of the cylinder (which would change the $C_1$ degree).
\end{proof}

\begin{corollary} \label{cor:automorphisms}
    Let $\Phi$ be a graded automorphism of the KLRW category, and suppose given graded-degree-zero isomorphisms
    $\eta_\theta: \Phi(\theta) \cong \theta$ for all KLRW objects $\theta$.  Suppose that the induced action on KLRW diagrams acts as the identity on: 
    \begin{enumerate}
        \item  diagrams for which no black strands are between some fixed pair $\pi$ of adjacent red strands
        \item the particular element $\begin{tikzpicture}[scale=.5]
	\begin{pgfonlayer}{nodelayer}
		\node [style=none] (0) at (0.375, 0.5) {};
		\node [style=none] (1) at (0.125, 0.5) {};
		\node [style=none] (2) at (-0.375, -0.25) {};
		\node [style=none] (3) at (0.125, -0.25) {};
		\node [style=none] (4) at (-0.125, -0.25) {};
		\node [style=none] (5) at (-0.125, 0.5) {};
	\end{pgfonlayer}
	\begin{pgfonlayer}{edgelayer}
		\draw [style=su2 puncture strand] (3.center) to (1.center);
		\draw [style=su2 puncture strand] (5.center) to (4.center);
		\draw [style=brane, in=90, out=-90] (0.center) to (2.center);
	\end{pgfonlayer}
\end{tikzpicture}$
    \end{enumerate}
    Then there is a choice of signs $\epsilon(\theta) \in {\pm 1}$ such that $\epsilon(\theta) \cdot \eta_\theta$ is a natural  isomorphism from $\Phi$ to the identity.  
\end{corollary}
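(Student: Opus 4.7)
The plan is to show that, after conjugating by the given $\eta_\theta$, the induced action $\widetilde{\Phi}(f) := \eta_{\theta'} \Phi(f) \eta_\theta^{-1}$ on $\Hom(\theta, \theta')$ is multiplication by a sign of the form $\epsilon(\theta)/\epsilon(\theta') \in \{\pm 1\}$; this is exactly the naturality condition for $\epsilon \cdot \eta$. Since naturality is preserved under composition and every KLRW morphism decomposes into elementary generators (dots, black-black crossings, red-black crossings), it is enough to verify the scaling on generators.

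By hypothesis (1), $\widetilde{\Phi}$ fixes every elementary generator that does not involve a black strand between the two reds of $\pi$, contributing a relative sign of $+1$. The only remaining generators are red-black crossings between a black strand and one of the reds of $\pi$, along with dots and black-black crossings acting in the $\pi$-region. For each such generator $\gamma : \theta \to \theta'$, Lemma \ref{morphism uniqueness} (for the red-black crossing case, with analogous multidegree arguments for the others) identifies the morphism space in $\gamma$'s multidegree as one-dimensional, so $\widetilde{\Phi}(\gamma) = \lambda_\gamma \, \gamma$ for some scalar $\lambda_\gamma$. Hypothesis (2) says that a specific double red-black crossing (a single black strand running from upper-right to lower-left across both reds of $\pi$) is fixed by $\widetilde{\Phi}$; this diagram factors as the composition of two elementary red-black crossings of $\pi$, so the product of their scalars is $1$. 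Combined with the red-bigon relation of Figure \ref{fig:KLRW-bigon-red}, whose right-hand side is a dotted vertical diagram covered by hypothesis (1), this forces $\lambda_\gamma^2 = 1$ for red-black crossings, hence $\lambda_\gamma = \pm 1$. The dot-pass-crossing relations of Figures \ref{fig:KLRW-skein-1} and \ref{fig:KLRW-skein-2} propagate the $\pm 1$ constraint to the dot and black-black crossing generators in the $\pi$-region.

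Finally, I would define $\epsilon(\theta)$ by fixing a basepoint object $\theta_0$ with no blacks between the reds of $\pi$, setting $\epsilon(\theta_0) = 1$, and taking $\epsilon(\theta)$ to be the product of the signs $\lambda_\gamma$ accumulated along a path of elementary moves connecting $\theta_0$ to $\theta$. The main obstacle is \emph{path-independence}: the sign must not depend on the chosen sequence of moves. This reduces to a relation-by-relation check that the scalar assignment $\gamma \mapsto \lambda_\gamma$ is compatible with every relation in Figures \ref{fig:KLRW} and \ref{fig:KLRW-braiding}. Each such relation is either entirely supported on diagrams covered by hypothesis (1)---and so automatically consistent---or involves a single ``across-$\pi$'' degree of freedom pinned down by hypothesis (2), with consistency then forced because $\widetilde{\Phi}$ is an automorphism and must preserve the relation with whatever scalars it has assigned. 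Granted path-independence, $\epsilon \cdot \eta$ is natural on generators and hence on all morphisms, completing the proof.
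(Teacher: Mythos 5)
Your treatment of the red--black crossings is essentially the paper's: Lemma \ref{morphism uniqueness} gives a scalar for each such crossing, the bigon-with-red relation of Figure \ref{fig:KLRW-bigon-red} (whose right-hand side is fixed by hypothesis (1)) forces the two scalars attached to a given red strand of $\pi$ to be mutually inverse, hence equal and $\pm 1$ over $\Z$, and hypothesis (2) forces the left and right signs $\epsilon_L,\epsilon_R$ to agree. Your path-defined $\epsilon(\theta)$ is also not really problematic: since every elementary move changing the number $k$ of black strands inside $\pi$ contributes the same sign and $\epsilon_L^2=1$, it is path-independent and equals the paper's closed form $(\epsilon_L)^k$. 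The genuine gap is in your handling of the dots and black--black crossings. First, the ``analogous multidegree argument'' fails: in the multidegree of a dot the relevant endomorphism space of $\theta$ is spanned by a dot on \emph{each} black strand, and in the multidegree of a black--black crossing by the crossings of each admissible adjacent pair, so these spaces are not one-dimensional and $\widetilde{\Phi}$ is not a priori scalar on these generators---it could mix them. Second, even granting a scalar, naturality for these generators (whose source equals their target, so the required relative sign is $\epsilon(\theta)/\epsilon(\theta')=1$) demands the scalar be exactly $+1$, whereas the dot-pass-crossing relations of Figures \ref{fig:KLRW-skein-1}, \ref{fig:KLRW-skein-2} only give the product of the dot and crossing scalars equal to $1$; ``propagating the $\pm1$ constraint'' therefore does not close the argument. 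A related imprecision: hypothesis (1) concerns diagrams with \emph{no} black strands between the pair $\pi$, so a generator located outside $\pi$ but with spectator strands inside is not literally covered by it either.

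The paper closes exactly these gaps by a different mechanism, which your proposal is missing. After correcting by $\epsilon$, the red--black crossings are honestly fixed, so one may compose an inside black--black crossing $B$ (or dot) with the morphism $A$ that sweeps all black strands out of the region across the left red strand. Functoriality, the isotopy $A\cdot B = B''\cdot A$ with $B''$ the corresponding generator now supported outside $\pi$ (so fixed by hypothesis (1)), and the linear independence of the composites $A\cdot B_j$ over the candidate generators $B_j$ in the same multidegree then force $\Phi(B)=B$ on the nose---simultaneously excluding mixing, pinning the scalar to $+1$, and handling spectator strands. You would need to supply an argument of this kind (or some other relation-based substitute) for the dots and black--black crossings before your proof is complete; the red--black crossing analysis and the definition of $\epsilon$ are fine as they stand.
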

\begin{proof}
    Note that $\eta$ being a natural transformation is a property and not a further structure because the morphism spaces in question are all in homological degree zero. 
    What we must check is that $\eta$ commutes with all KLRW diagrams; it suffices to check commuting with the generating diagrams consisting of either a single crossing or a single dot.

    By Lemma \ref{morphism uniqueness}, $\Phi$ sends each red-black crossing to itself, up to a scalar, which our hypothesis only permits to be nontrivial when 
    the crossing involves one strand of the pair $\pi$.  
    There are four such red-black crossings, two involving the left red strand and two involving the right red strand.  For the two involving a given strand, they can be composed to give a dot outside $\pi$ (on which $\Phi$ acts trivially) so these scalars must be inverse, hence in fact be equal and $\pm 1$ since we are working over $\mathbb{Z}$.  We write $\epsilon_L$ for the sign on the left strand and $\epsilon_R$ for the sign on the right strand.  By condition (2) above, we must have $\epsilon_L = \epsilon_R$.  We choose $\epsilon(\theta) := (\epsilon_L)^k$, where $k$ is the number of black strands in $\theta$ between the pair $\pi$.  Then $\epsilon(\theta)\cdot \eta_\theta $ commutes with the red-black crossings.

    Next we will show that the black-black crossings between $\pi$ are sent to themselves. We have
    \begin{align}
    \Phi \Big(
    \vcenter{\hbox{
    \begin{tikzpicture}[scale=0.3]
        \draw[style=su2 puncture strand] (3,0) -- (3,2);
        \draw[style=su2 puncture strand] (0,0) -- (0,2);
        \draw[in=-90, out=90, looseness=1] (.5,0) to (-2.5,1);
        \draw (-2.5,1) -- (-2.5,2);
        \draw (.75,0) -- (.75,.25);
        \draw[in=-90, out=90, looseness=1] (.75,.25) to (-2.25,1.25);
        \draw (-2.25,1.25) -- (-2.25,2);
        \node at (1.5,.5) {$\scriptstyle\cdots$};
        \node at (-1.5,1.5) {$\scriptstyle\cdots$};
        \draw (2.25,0) -- (2.25,.75);
        \draw[in=-90, out=90, looseness=1] (2.25,.75) to (-.75,1.75);
        \draw (-.75,1.75) -- (-.75,2);
        \draw (2.5,0) -- (2.5,1);
        \draw[in=-90, out=90, looseness=1] (2.5,1) to (-.5,2);
    \end{tikzpicture}
    }} \cdot
    \vcenter{ \vspace{1em} \hbox{
    \begin{tikzpicture}[scale=0.3]
        \draw[style=su2 puncture strand] (3,0) -- (3,2);
        \draw[style=su2 puncture strand] (0,0) -- (0,2);
        \draw (.5,0) -- (.5,2);
        \node at (1,1) {$\scriptstyle\cdots$};
        \draw (1.75,0) -- (1.25,2);
        \node[below] at (2,0){$\ \scriptstyle k+1$};
        \draw (1.25,0) -- (1.75,2);
        \node[below] at (1,0){$\scriptstyle k$};
        \node at (2,1) {$\scriptstyle\cdots$};
        \draw (2.5,0) -- (2.5,2);
    \end{tikzpicture}
    }} \Big)
    &= \Phi \left(
    \vcenter{\hbox{
    \begin{tikzpicture}[scale=0.3]
        \draw[style=su2 puncture strand] (3,0) -- (3,2);
        \draw[style=su2 puncture strand] (0,0) -- (0,2);
        \draw[in=-90, out=90, looseness=1] (.5,0) to (-2.5,1);
        \draw (-2.5,1) -- (-2.5,2);
        \draw (.75,0) -- (.75,.25);
        \draw[in=-90, out=90, looseness=1] (.75,.25) to (-2.25,1.25);
        \draw (-2.25,1.25) -- (-2.25,2);
        \node at (1.5,.5) {$\scriptstyle\cdots$};
        \node at (-1.5,1.5) {$\scriptstyle\cdots$};
        \draw (2.25,0) -- (2.25,.75);
        \draw[in=-90, out=90, looseness=1] (2.25,.75) to (-.75,1.75);
        \draw (-.75,1.75) -- (-.75,2);
        \draw (2.5,0) -- (2.5,1);
        \draw[in=-90, out=90, looseness=1] (2.5,1) to (-.5,2);
    \end{tikzpicture}
    }} \right) \cdot \Phi \Big(
    \vcenter{ \vspace{1em} \hbox{
    \begin{tikzpicture}[scale=0.3]
        \draw[style=su2 puncture strand] (3,0) -- (3,2);
        \draw[style=su2 puncture strand] (0,0) -- (0,2);
        \draw (.5,0) -- (.5,2);
        \node at (1,1) {$\scriptstyle\cdots$};
        \draw (1.75,0) -- (1.25,2);
        \node[below] at (2,0){$\ \scriptstyle k+1$};
        \draw (1.25,0) -- (1.75,2);
        \node[below] at (1,0){$\scriptstyle k$};
        \node at (2,1) {$\scriptstyle\cdots$};
        \draw (2.5,0) -- (2.5,2);
    \end{tikzpicture}
    }} \Big) \\
    &=
    \vcenter{\hbox{
    \begin{tikzpicture}[scale=0.3]
        \draw[style=su2 puncture strand] (3,0) -- (3,2);
        \draw[style=su2 puncture strand] (0,0) -- (0,2);
        \draw[in=-90, out=90, looseness=1] (.5,0) to (-2.5,1);
        \draw (-2.5,1) -- (-2.5,2);
        \draw (.75,0) -- (.75,.25);
        \draw[in=-90, out=90, looseness=1] (.75,.25) to (-2.25,1.25);
        \draw (-2.25,1.25) -- (-2.25,2);
        \node at (1.5,.5) {$\scriptstyle\cdots$};
        \node at (-1.5,1.5) {$\scriptstyle\cdots$};
        \draw (2.25,0) -- (2.25,.75);
        \draw[in=-90, out=90, looseness=1] (2.25,.75) to (-.75,1.75);
        \draw (-.75,1.75) -- (-.75,2);
        \draw (2.5,0) -- (2.5,1);
        \draw[in=-90, out=90, looseness=1] (2.5,1) to (-.5,2);
    \end{tikzpicture}
    }} \cdot \Phi \Big(
    \vcenter{ \vspace{1em} \hbox{
    \begin{tikzpicture}[scale=0.3]
        \draw[style=su2 puncture strand] (3,0) -- (3,2);
        \draw[style=su2 puncture strand] (0,0) -- (0,2);
        \draw (.5,0) -- (.5,2);
        \node at (1,1) {$\scriptstyle\cdots$};
        \draw (1.75,0) -- (1.25,2);
        \node[below] at (2,0){$\ \scriptstyle k+1$};
        \draw (1.25,0) -- (1.75,2);
        \node[below] at (1,0){$\scriptstyle k$};
        \node at (2,1) {$\scriptstyle\cdots$};
        \draw (2.5,0) -- (2.5,2);
    \end{tikzpicture}
    }} \Big)
    \end{align}
    On the other hand, we can commute the two terms to get
    \begin{align}
    \Phi \Big(
    \vcenter{\hbox{
    \begin{tikzpicture}[scale=0.3]
        \draw[style=su2 puncture strand](3,0) -- (3,2);
        \draw[style=su2 puncture strand] (0,0) -- (0,2);
        \draw[in=-90, out=90, looseness=1] (.5,0) to (-2.5,1);
        \draw (-2.5,1) -- (-2.5,2);
        \draw (.75,0) -- (.75,.25);
        \draw[in=-90, out=90, looseness=1] (.75,.25) to (-2.25,1.25);
        \draw (-2.25,1.25) -- (-2.25,2);
        \node at (1.5,.5) {$\scriptstyle\cdots$};
        \node at (-1.5,1.5) {$\scriptstyle\cdots$};
        \draw (2.25,0) -- (2.25,.75);
        \draw[in=-90, out=90, looseness=1] (2.25,.75) to (-.75,1.75);
        \draw (-.75,1.75) -- (-.75,2);
        \draw (2.5,0) -- (2.5,1);
        \draw[in=-90, out=90, looseness=1] (2.5,1) to (-.5,2);
    \end{tikzpicture}
    }} \cdot
    \vcenter{ \vspace{1em} \hbox{
    \begin{tikzpicture}[scale=0.3]
        \draw[style=su2 puncture strand] (3,0) -- (3,2);
        \draw[style=su2 puncture strand] (0,0) -- (0,2);
        \draw (.5,0) -- (.5,2);
        \node at (1,1) {$\scriptstyle\cdots$};
        \draw (1.75,0) -- (1.25,2);
        \node[below] at (2,0){$\ \scriptstyle k+1$};
        \draw (1.25,0) -- (1.75,2);
        \node[below] at (1,0){$\scriptstyle k$};
        \node at (2,1) {$\scriptstyle\cdots$};
        \draw (2.5,0) -- (2.5,2);
    \end{tikzpicture}
    }} \Big)
    &= \Phi \Big(
    \vcenter{ \vspace{1em} \hbox{
    \begin{tikzpicture}[scale=0.3]
        \draw[style=su2 puncture strand] (3,0) -- (3,2);
        \draw[style=su2 puncture strand] (3.5,0) -- (3.5,2);
        \draw (.5,0) -- (.5,2);
        \node at (1,1) {$\scriptstyle\cdots$};
        \draw (1.75,0) -- (1.25,2);
        \node[below] at (2,0){$\ \scriptstyle k+1$};
        \draw (1.25,0) -- (1.75,2);
        \node[below] at (1,0){$\scriptstyle k$};
        \node at (2,1) {$\scriptstyle\cdots$};
        \draw (2.5,0) -- (2.5,2);
    \end{tikzpicture}
    }} \cdot
    \vcenter{\hbox{
    \begin{tikzpicture}[scale=0.3]
        \draw[style=su2 puncture strand] (3,0) -- (3,2);
        \draw[style=su2 puncture strand] (0,0) -- (0,2);
        \draw[in=-90, out=90, looseness=1] (.5,0) to (-2.5,1);
        \draw (-2.5,1) -- (-2.5,2);
        \draw (.75,0) -- (.75,.25);
        \draw[in=-90, out=90, looseness=1] (.75,.25) to (-2.25,1.25);
        \draw (-2.25,1.25) -- (-2.25,2);
        \node at (1.5,.5) {$\scriptstyle\cdots$};
        \node at (-1.5,1.5) {$\scriptstyle\cdots$};
        \draw (2.25,0) -- (2.25,.75);
        \draw[in=-90, out=90, looseness=1] (2.25,.75) to (-.75,1.75);
        \draw (-.75,1.75) -- (-.75,2);
        \draw (2.5,0) -- (2.5,1);
        \draw[in=-90, out=90, looseness=1] (2.5,1) to (-.5,2);
    \end{tikzpicture}
    }} \Big) \\
    &=
    \vcenter{ \vspace{1em} \hbox{
    \begin{tikzpicture}[scale=0.3]
        \draw[style=su2 puncture strand] (3,0) -- (3,2);
        \draw[style=su2 puncture strand] (3.5,0) -- (3.5,2);
        \draw (.5,0) -- (.5,2);
        \node at (1,1) {$\scriptstyle\cdots$};
        \draw (1.75,0) -- (1.25,2);
        \node[below] at (2,0){$\ \scriptstyle k+1$};
        \draw (1.25,0) -- (1.75,2);
        \node[below] at (1,0){$\scriptstyle k$};
        \node at (2,1) {$\scriptstyle\cdots$};
        \draw (2.5,0) -- (2.5,2);
    \end{tikzpicture}
    }} \cdot
    \vcenter{\hbox{
    \begin{tikzpicture}[scale=0.3]
        \draw[style=su2 puncture strand] (3,0) -- (3,2);
        \draw[style=su2 puncture strand] (0,0) -- (0,2);
        \draw[in=-90, out=90, looseness=1] (.5,0) to (-2.5,1);
        \draw (-2.5,1) -- (-2.5,2);
        \draw (.75,0) -- (.75,.25);
        \draw[in=-90, out=90, looseness=1] (.75,.25) to (-2.25,1.25);
        \draw (-2.25,1.25) -- (-2.25,2);
        \node at (1.5,.5) {$\scriptstyle\cdots$};
        \node at (-1.5,1.5) {$\scriptstyle\cdots$};
        \draw (2.25,0) -- (2.25,.75);
        \draw[in=-90, out=90, looseness=1] (2.25,.75) to (-.75,1.75);
        \draw (-.75,1.75) -- (-.75,2);
        \draw (2.5,0) -- (2.5,1);
        \draw[in=-90, out=90, looseness=1] (2.5,1) to (-.5,2);
    \end{tikzpicture}
    }} \\
    &= \vcenter{\hbox{
    \begin{tikzpicture}[scale=0.3]
        \draw[style=su2 puncture strand] (3,0) -- (3,2);
        \draw[style=su2 puncture strand] (0,0) -- (0,2);
        \draw[in=-90, out=90, looseness=1] (.5,0) to (-2.5,1);
        \draw (-2.5,1) -- (-2.5,2);
        \draw (.75,0) -- (.75,.25);
        \draw[in=-90, out=90, looseness=1] (.75,.25) to (-2.25,1.25);
        \draw (-2.25,1.25) -- (-2.25,2);
        \node at (1.5,.5) {$\scriptstyle\cdots$};
        \node at (-1.5,1.5) {$\scriptstyle\cdots$};
        \draw (2.25,0) -- (2.25,.75);
        \draw[in=-90, out=90, looseness=1] (2.25,.75) to (-.75,1.75);
        \draw (-.75,1.75) -- (-.75,2);
        \draw (2.5,0) -- (2.5,1);
        \draw[in=-90, out=90, looseness=1] (2.5,1) to (-.5,2);
    \end{tikzpicture}
    }} \cdot
    \vcenter{ \vspace{1em} \hbox{
    \begin{tikzpicture}[scale=0.3]
        \draw[style=su2 puncture strand] (3,0) -- (3,2);
        \draw[style=su2 puncture strand] (0,0) -- (0,2);
        \draw (.5,0) -- (.5,2);
        \node at (1,1) {$\scriptstyle\cdots$};
        \draw (1.75,0) -- (1.25,2);
        \node[below] at (2,0){$\ \scriptstyle k+1$};
        \draw (1.25,0) -- (1.75,2);
        \node[below] at (1,0){$\scriptstyle k$};
        \node at (2,1) {$\scriptstyle\cdots$};
        \draw (2.5,0) -- (2.5,2);
    \end{tikzpicture}
    }}
    \end{align}
    The only solution is
    \begin{equation}
    \Phi \Big(
    \vcenter{ \vspace{1em} \hbox{
    \begin{tikzpicture}[scale=0.3]
        \draw[style=su2 puncture strand] (3,0) -- (3,2);
        \draw[style=su2 puncture strand] (0,0) -- (0,2);
        \draw (.5,0) -- (.5,2);
        \node at (1,1) {$\scriptstyle\cdots$};
        \draw (1.75,0) -- (1.25,2);
        \node[below] at (2,0){$\ \scriptstyle k+1$};
        \draw (1.25,0) -- (1.75,2);
        \node[below] at (1,0){$\scriptstyle k$};
        \node at (2,1) {$\scriptstyle\cdots$};
        \draw (2.5,0) -- (2.5,2);
    \end{tikzpicture}
    }} \Big) =
    \vcenter{ \vspace{1em} \hbox{
    \begin{tikzpicture}[scale=0.3]
        \draw[style=su2 puncture strand] (3,0) -- (3,2);
        \draw[style=su2 puncture strand] (0,0) -- (0,2);
        \draw (.5,0) -- (.5,2);
        \node at (1,1) {$\scriptstyle\cdots$};
        \draw (1.75,0) -- (1.25,2);
        \node[below] at (2,0){$\ \scriptstyle k+1$};
        \draw (1.25,0) -- (1.75,2);
        \node[below] at (1,0){$\scriptstyle k$};
        \node at (2,1) {$\scriptstyle\cdots$};
        \draw (2.5,0) -- (2.5,2);
    \end{tikzpicture}
    }}
    \end{equation}
    A similar calculation shows that  dot morphisms are preserved by $\Phi$,  completing the proof.
\end{proof}

\begin{remark}
    Without assuming condition (2) above, one can instead take $\epsilon(\theta) = \epsilon_L^l \epsilon_R^r$ where $l$ is the number of black strands to the left of $\pi$ and right of the cylindrical marking point  $\phi$, similarly $r$ is the number of black strands to the right of $\pi$ and left of $\phi$. Then $\epsilon(\theta) \cdot \eta_\theta$ is either a natural isomorphism to the identity, or to the functor $(-1)^C$, which is the identity on objects and $(-1)^C$ on morphisms, where $C$ is the number of times a strand crosses the marked point $\phi$.  
\end{remark}

\begin{remark}
    The comparison between a Hom space and Khovanov homology happens after setting $u = \hbar = 1$, so the $u, \hbar$ gradings are not retained by the Khovanov homology. In fact, if one sets these parameters to zero, the resulting Hom space does not give a knot invariant. 
\end{remark}

\section{2 red 1 black KLRW calculations} \label{2 red 1 black}

Any  $\mathcal{C}_{\Gamma, 0, F}$ (no black strands) is a category with one element, with endomorphisms $\Z$.  
Consider  $\mathbb{R}_\pi \in \mathcal{C}_{\bullet, 1, F} - \module - \mathcal{C}_{\bullet, 0, F \setminus \pi} =
\mathcal{C}_{\bullet, 1, F} - \module$.
To emphasize that we consider it as a module, let us choose some $\theta$ in the interval between the two red points of $\pi$, and write $S_\theta:= \mathbb{R}_\pi$.  Note that $\theta$ also determines an element of $\mathcal{C}_{\bullet, 1, F}$.  We also write $\theta_+$ and $\theta_-$ for points immediately to the right and left of the interval between the pair $\pi$.   

Webster gives the following resolution \cite[\S 4.3]{webster2016tensor}: 

\begin{equation}\label{eqn:simple resolution}
    \theta\{-2\}
    \xrightarrow{\begin{pmatrix} -\vcenter{\hbox{\begin{tikzpicture}[scale=.5]
	\begin{pgfonlayer}{nodelayer}
		\node [style=none] (0) at (-0.25, 0.5) {};
		\node [style=none] (1) at (0.25, 0.5) {};
		\node [style=none] (2) at (-0.25, -0.25) {};
		\node [style=none] (3) at (0.25, -0.25) {};
		\node [style=none] (4) at (0.5, -0.25) {};
		\node [style=none] (5) at (0, 0.5) {};
	\end{pgfonlayer}
	\begin{pgfonlayer}{edgelayer}
		\draw [style=su2 puncture strand] (0.center) to (2.center);
		\draw [style=su2 puncture strand] (3.center) to (1.center);
		\draw [style=brane, in=90, out=-90, looseness=1.25] (5.center) to (4.center);
	\end{pgfonlayer}
\end{tikzpicture}}} \\ \vcenter{\hbox{\begin{tikzpicture}[scale=.5]
	\begin{pgfonlayer}{nodelayer}
		\node [style=none] (0) at (-0.25, 0.5) {};
		\node [style=none] (1) at (0.25, 0.5) {};
		\node [style=none] (2) at (-0.25, -0.25) {};
		\node [style=none] (3) at (0.25, -0.25) {};
		\node [style=none] (4) at (-0.5, -0.25) {};
		\node [style=none] (5) at (0, 0.5) {};
	\end{pgfonlayer}
	\begin{pgfonlayer}{edgelayer}
		\draw [style=su2 puncture strand] (0.center) to (2.center);
		\draw [style=su2 puncture strand] (3.center) to (1.center);
		\draw [style=brane, in=90, out=-90, looseness=1.25] (5.center) to (4.center);
	\end{pgfonlayer}
\end{tikzpicture}}} \end{pmatrix}}
    \begin{matrix} \theta_+\{-1\} \\ \oplus \\ \theta_-\{-1\} \end{matrix}
    \xrightarrow{\begin{pmatrix} \vcenter{\hbox{\begin{tikzpicture}[scale=.5]
	\begin{pgfonlayer}{nodelayer}
		\node [style=none] (0) at (-0.25, 0.5) {};
		\node [style=none] (1) at (0.25, 0.5) {};
		\node [style=none] (2) at (-0.25, -0.25) {};
		\node [style=none] (3) at (0.25, -0.25) {};
		\node [style=none] (4) at (0, -0.25) {};
		\node [style=none] (5) at (0.5, 0.5) {};
	\end{pgfonlayer}
	\begin{pgfonlayer}{edgelayer}
		\draw [style=su2 puncture strand] (0.center) to (2.center);
		\draw [style=su2 puncture strand] (3.center) to (1.center);
		\draw [style=brane, in=90, out=-90, looseness=1.25] (5.center) to (4.center);
	\end{pgfonlayer}
\end{tikzpicture}}} & \vcenter{\hbox{\begin{tikzpicture}[scale=.5]
	\begin{pgfonlayer}{nodelayer}
		\node [style=none] (0) at (-0.25, 0.5) {};
		\node [style=none] (1) at (0.25, 0.5) {};
		\node [style=none] (2) at (-0.25, -0.25) {};
		\node [style=none] (3) at (0.25, -0.25) {};
		\node [style=none] (4) at (0, -0.25) {};
		\node [style=none] (5) at (-0.5, 0.5) {};
	\end{pgfonlayer}
	\begin{pgfonlayer}{edgelayer}
		\draw [style=su2 puncture strand] (0.center) to (2.center);
		\draw [style=su2 puncture strand] (3.center) to (1.center);
		\draw [style=brane, in=90, out=-90, looseness=1.25] (5.center) to (4.center);
	\end{pgfonlayer}
\end{tikzpicture}}} \end{pmatrix}}
    \theta
    \rightarrow S_\theta
\end{equation}
where the differentials act by right multiplication. The number in braces is a J-degree shift.

We will now describe the action of $\mathbb{B}_{\tau(\pi)}$ on certain elements using the exact triangle \eqref{webster exact triangle}.

\begin{lemma}
\begin{equation}
    \mathbb{B}_{\tau(\pi)} \theta \cong
    \theta \{-2\}
    \xrightarrow{\begin{pmatrix} -\vcenter{\hbox{\begin{tikzpicture}[scale=.5]
	\begin{pgfonlayer}{nodelayer}
		\node [style=none] (0) at (-0.25, 0.5) {};
		\node [style=none] (1) at (0.25, 0.5) {};
		\node [style=none] (2) at (-0.25, -0.25) {};
		\node [style=none] (3) at (0.25, -0.25) {};
		\node [style=none] (4) at (0.5, -0.25) {};
		\node [style=none] (5) at (0, 0.5) {};
	\end{pgfonlayer}
	\begin{pgfonlayer}{edgelayer}
		\draw [style=su2 puncture strand] (0.center) to (2.center);
		\draw [style=su2 puncture strand] (3.center) to (1.center);
		\draw [style=brane, in=90, out=-90, looseness=1.25] (5.center) to (4.center);
	\end{pgfonlayer}
\end{tikzpicture}}} & \vcenter{\hbox{\begin{tikzpicture}[scale=.5]
	\begin{pgfonlayer}{nodelayer}
		\node [style=none] (0) at (-0.25, 0.5) {};
		\node [style=none] (1) at (0.25, 0.5) {};
		\node [style=none] (2) at (-0.25, -0.25) {};
		\node [style=none] (3) at (0.25, -0.25) {};
		\node [style=none] (4) at (-0.5, -0.25) {};
		\node [style=none] (5) at (0, 0.5) {};
	\end{pgfonlayer}
	\begin{pgfonlayer}{edgelayer}
		\draw [style=su2 puncture strand] (0.center) to (2.center);
		\draw [style=su2 puncture strand] (3.center) to (1.center);
		\draw [style=brane, in=90, out=-90, looseness=1.25] (5.center) to (4.center);
	\end{pgfonlayer}
\end{tikzpicture}}} \end{pmatrix}}
    \theta_+\{-1\} \oplus \theta_-\{-1\},
\end{equation}
\end{lemma}

\begin{proof}
We will give two different calculations. 

The first is via the exact triangle \eqref{webster exact triangle}, from which we see 
$$\mathbb{B}_{\tau(\pi)} \theta \cong \mathrm{cone}  \left(  \theta \xrightarrow{\vcenter{\hbox{\includegraphics[scale=.7]{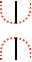}}}} \vcenter{\hbox{\includegraphics[scale=.7]{cupcap.pdf}}} \right)$$
Here, the same symbol names both a map a module; as explained above, the module is spanned by KLRW diagrams attached on the top modulo some relations; meanwhile the map is defined by attaching said picture to the bottom of the KLRW diagrams which are elements of $\theta$.

The cap at the bottom of \includegraphics[scale=.5]{cupcap.pdf} does not change anything, so we can just as well write 
\begin{equation}\label{eqn:braiding theta}
    \bigg\{ \theta \xrightarrow{\vcenter{\hbox{\includegraphics[scale=.7]{su2_simple.pdf}}}} \vcenter{\hbox{\includegraphics[scale=.7]{su2_simple.pdf}}} \bigg\} \cong \bigg\{ \theta \xrightarrow{\vcenter{\hbox{\includegraphics[scale=.7]{su2_simple.pdf}}}} S_\theta \bigg\}.
\end{equation}
Resolving $S_\theta$ gives
\begin{equation}\label{eqn:braiding theta resolved}
    \theta\{-2\}
    \xrightarrow{\begin{pmatrix} 0 & -\vcenter{\hbox{\begin{tikzpicture}[scale=.5]
	\begin{pgfonlayer}{nodelayer}
		\node [style=none] (0) at (-0.25, 0.5) {};
		\node [style=none] (1) at (0.25, 0.5) {};
		\node [style=none] (2) at (-0.25, -0.25) {};
		\node [style=none] (3) at (0.25, -0.25) {};
		\node [style=none] (4) at (0.5, -0.25) {};
		\node [style=none] (5) at (0, 0.5) {};
	\end{pgfonlayer}
	\begin{pgfonlayer}{edgelayer}
		\draw [style=su2 puncture strand] (0.center) to (2.center);
		\draw [style=su2 puncture strand] (3.center) to (1.center);
		\draw [style=brane, in=90, out=-90, looseness=1.25] (5.center) to (4.center);
	\end{pgfonlayer}
\end{tikzpicture}}} & \vcenter{\hbox{\begin{tikzpicture}[scale=.5]
	\begin{pgfonlayer}{nodelayer}
		\node [style=none] (0) at (-0.25, 0.5) {};
		\node [style=none] (1) at (0.25, 0.5) {};
		\node [style=none] (2) at (-0.25, -0.25) {};
		\node [style=none] (3) at (0.25, -0.25) {};
		\node [style=none] (4) at (-0.5, -0.25) {};
		\node [style=none] (5) at (0, 0.5) {};
	\end{pgfonlayer}
	\begin{pgfonlayer}{edgelayer}
		\draw [style=su2 puncture strand] (0.center) to (2.center);
		\draw [style=su2 puncture strand] (3.center) to (1.center);
		\draw [style=brane, in=90, out=-90, looseness=1.25] (5.center) to (4.center);
	\end{pgfonlayer}
\end{tikzpicture}}} \end{pmatrix}}
    \theta \oplus \theta_+\{-1\} \oplus \theta_-\{-1\}
    \xrightarrow{\begin{pmatrix} \vcenter{\hbox{\begin{tikzpicture}[scale=.5]
	\begin{pgfonlayer}{nodelayer}
		\node [style=none] (0) at (-0.25, 0.5) {};
		\node [style=none] (1) at (0.25, 0.5) {};
		\node [style=none] (2) at (-0.25, -0.25) {};
		\node [style=none] (3) at (0.25, -0.25) {};
		\node [style=none] (4) at (0, -0.25) {};
		\node [style=none] (5) at (0, 0.5) {};
	\end{pgfonlayer}
	\begin{pgfonlayer}{edgelayer}
		\draw [style=su2 puncture strand] (0.center) to (2.center);
		\draw [style=su2 puncture strand] (3.center) to (1.center);
		\draw [style=brane, in=90, out=-90, looseness=1.25] (5.center) to (4.center);
	\end{pgfonlayer}
\end{tikzpicture}}} \\ \vcenter{\hbox{\begin{tikzpicture}[scale=.5]
	\begin{pgfonlayer}{nodelayer}
		\node [style=none] (0) at (-0.25, 0.5) {};
		\node [style=none] (1) at (0.25, 0.5) {};
		\node [style=none] (2) at (-0.25, -0.25) {};
		\node [style=none] (3) at (0.25, -0.25) {};
		\node [style=none] (4) at (0, -0.25) {};
		\node [style=none] (5) at (0.5, 0.5) {};
	\end{pgfonlayer}
	\begin{pgfonlayer}{edgelayer}
		\draw [style=su2 puncture strand] (0.center) to (2.center);
		\draw [style=su2 puncture strand] (3.center) to (1.center);
		\draw [style=brane, in=90, out=-90, looseness=1.25] (5.center) to (4.center);
	\end{pgfonlayer}
\end{tikzpicture}}} \\ \vcenter{\hbox{\begin{tikzpicture}[scale=.5]
	\begin{pgfonlayer}{nodelayer}
		\node [style=none] (0) at (-0.25, 0.5) {};
		\node [style=none] (1) at (0.25, 0.5) {};
		\node [style=none] (2) at (-0.25, -0.25) {};
		\node [style=none] (3) at (0.25, -0.25) {};
		\node [style=none] (4) at (0, -0.25) {};
		\node [style=none] (5) at (-0.5, 0.5) {};
	\end{pgfonlayer}
	\begin{pgfonlayer}{edgelayer}
		\draw [style=su2 puncture strand] (0.center) to (2.center);
		\draw [style=su2 puncture strand] (3.center) to (1.center);
		\draw [style=brane, in=90, out=-90, looseness=1.25] (5.center) to (4.center);
	\end{pgfonlayer}
\end{tikzpicture}}} \end{pmatrix}}
    \theta
\end{equation}

The second way is to resolve $\mathbb{B}_{\tau(\pi)}(\theta)$ directly using its diagrammatic definition.  
The module $\mathbb{B}_{\tau(\pi)}(\theta)$ contains all diagrams where the two red strands cross once and the ordering of the strands at the bottom of the diagram is $\vcenter{\hbox{\begin{tikzpicture}[scale=.5]
	\begin{pgfonlayer}{nodelayer}
		\node [style=none] (0) at (-0.25, 0.5) {};
		\node [style=none] (1) at (0.25, 0.5) {};
		\node [style=none] (2) at (-0.25, -0.25) {};
		\node [style=none] (3) at (0.25, -0.25) {};
		\node [style=none] (4) at (0, -0.25) {};
		\node [style=none] (5) at (0, 0.5) {};
	\end{pgfonlayer}
	\begin{pgfonlayer}{edgelayer}
		\draw [style=su2 puncture strand] (0.center) to (2.center);
		\draw [style=su2 puncture strand] (3.center) to (1.center);
		\draw [style=brane, in=90, out=-90, looseness=1.25] (5.center) to (4.center);
	\end{pgfonlayer}
\end{tikzpicture}}}$. We can push this crossing of the red strands to the bottom of the diagram. Then, it is easy to see that the image of the map $\begin{pmatrix} \vcenter{\hbox{\begin{tikzpicture}[scale=.5]
	\begin{pgfonlayer}{nodelayer}
		\node [style=none] (0) at (-0.25, 0.5) {};
		\node [style=none] (1) at (0.25, 0.5) {};
		\node [style=none] (2) at (0.25, -0.25) {};
		\node [style=none] (3) at (-0.25, -0.25) {};
		\node [style=none] (4) at (0, -0.25) {};
		\node [style=none] (5) at (0.5, 0.5) {};
	\end{pgfonlayer}
	\begin{pgfonlayer}{edgelayer}
		\draw [style=su2 puncture strand] (0.center) to (2.center);
		\draw [style=su2 puncture strand] (3.center) to (1.center);
		\draw [style=brane] (5.center) to (4.center);
	\end{pgfonlayer}
\end{tikzpicture}}} & \vcenter{\hbox{\begin{tikzpicture}[scale=.5]
	\begin{pgfonlayer}{nodelayer}
		\node [style=none] (0) at (-0.25, 0.5) {};
		\node [style=none] (1) at (0.25, 0.5) {};
		\node [style=none] (2) at (0.25, -0.25) {};
		\node [style=none] (3) at (-0.25, -0.25) {};
		\node [style=none] (4) at (0, -0.25) {};
		\node [style=none] (5) at (-0.5, 0.5) {};
	\end{pgfonlayer}
	\begin{pgfonlayer}{edgelayer}
		\draw [style=su2 puncture strand] (0.center) to (2.center);
		\draw [style=su2 puncture strand] (3.center) to (1.center);
		\draw [style=brane] (5.center) to (4.center);
	\end{pgfonlayer}
\end{tikzpicture}}} \end{pmatrix}^{\intercal}$ spans $\mathbb{B}_{\tau(\pi)}(\theta)$. The kernel of this map comes from the first relation is Figure \ref{fig:KLRW-braiding} and hence lies in the image of $\begin{pmatrix} -\vcenter{\hbox{\begin{tikzpicture}[scale=.5]
	\begin{pgfonlayer}{nodelayer}
		\node [style=none] (0) at (-0.25, 0.5) {};
		\node [style=none] (1) at (0.25, 0.5) {};
		\node [style=none] (2) at (-0.25, -0.25) {};
		\node [style=none] (3) at (0.25, -0.25) {};
		\node [style=none] (4) at (0.5, -0.25) {};
		\node [style=none] (5) at (0, 0.5) {};
	\end{pgfonlayer}
	\begin{pgfonlayer}{edgelayer}
		\draw [style=su2 puncture strand] (0.center) to (2.center);
		\draw [style=su2 puncture strand] (3.center) to (1.center);
		\draw [style=brane, in=90, out=-90, looseness=1.25] (5.center) to (4.center);
	\end{pgfonlayer}
\end{tikzpicture}}} & \vcenter{\hbox{\begin{tikzpicture}[scale=.5]
	\begin{pgfonlayer}{nodelayer}
		\node [style=none] (0) at (-0.25, 0.5) {};
		\node [style=none] (1) at (0.25, 0.5) {};
		\node [style=none] (2) at (-0.25, -0.25) {};
		\node [style=none] (3) at (0.25, -0.25) {};
		\node [style=none] (4) at (-0.5, -0.25) {};
		\node [style=none] (5) at (0, 0.5) {};
	\end{pgfonlayer}
	\begin{pgfonlayer}{edgelayer}
		\draw [style=su2 puncture strand] (0.center) to (2.center);
		\draw [style=su2 puncture strand] (3.center) to (1.center);
		\draw [style=brane, in=90, out=-90, looseness=1.25] (5.center) to (4.center);
	\end{pgfonlayer}
\end{tikzpicture}}} \end{pmatrix}$. Since $\begin{pmatrix} -\vcenter{\hbox{\begin{tikzpicture}[scale=.5]
	\begin{pgfonlayer}{nodelayer}
		\node [style=none] (0) at (-0.25, 0.5) {};
		\node [style=none] (1) at (0.25, 0.5) {};
		\node [style=none] (2) at (-0.25, -0.25) {};
		\node [style=none] (3) at (0.25, -0.25) {};
		\node [style=none] (4) at (0.5, -0.25) {};
		\node [style=none] (5) at (0, 0.5) {};
	\end{pgfonlayer}
	\begin{pgfonlayer}{edgelayer}
		\draw [style=su2 puncture strand] (0.center) to (2.center);
		\draw [style=su2 puncture strand] (3.center) to (1.center);
		\draw [style=brane, in=90, out=-90, looseness=1.25] (5.center) to (4.center);
	\end{pgfonlayer}
\end{tikzpicture}}} & \vcenter{\hbox{\begin{tikzpicture}[scale=.5]
	\begin{pgfonlayer}{nodelayer}
		\node [style=none] (0) at (-0.25, 0.5) {};
		\node [style=none] (1) at (0.25, 0.5) {};
		\node [style=none] (2) at (-0.25, -0.25) {};
		\node [style=none] (3) at (0.25, -0.25) {};
		\node [style=none] (4) at (-0.5, -0.25) {};
		\node [style=none] (5) at (0, 0.5) {};
	\end{pgfonlayer}
	\begin{pgfonlayer}{edgelayer}
		\draw [style=su2 puncture strand] (0.center) to (2.center);
		\draw [style=su2 puncture strand] (3.center) to (1.center);
		\draw [style=brane, in=90, out=-90, looseness=1.25] (5.center) to (4.center);
	\end{pgfonlayer}
\end{tikzpicture}}} \end{pmatrix}$ has only zero in its kernel, we arrive at the resolution:

\begin{equation}
    \theta \{-2\}
    \xrightarrow{\begin{pmatrix} -\vcenter{\hbox{\begin{tikzpicture}[scale=.5]
	\begin{pgfonlayer}{nodelayer}
		\node [style=none] (0) at (-0.25, 0.5) {};
		\node [style=none] (1) at (0.25, 0.5) {};
		\node [style=none] (2) at (-0.25, -0.25) {};
		\node [style=none] (3) at (0.25, -0.25) {};
		\node [style=none] (4) at (0.5, -0.25) {};
		\node [style=none] (5) at (0, 0.5) {};
	\end{pgfonlayer}
	\begin{pgfonlayer}{edgelayer}
		\draw [style=su2 puncture strand] (0.center) to (2.center);
		\draw [style=su2 puncture strand] (3.center) to (1.center);
		\draw [style=brane, in=90, out=-90, looseness=1.25] (5.center) to (4.center);
	\end{pgfonlayer}
\end{tikzpicture}}} & \vcenter{\hbox{\begin{tikzpicture}[scale=.5]
	\begin{pgfonlayer}{nodelayer}
		\node [style=none] (0) at (-0.25, 0.5) {};
		\node [style=none] (1) at (0.25, 0.5) {};
		\node [style=none] (2) at (-0.25, -0.25) {};
		\node [style=none] (3) at (0.25, -0.25) {};
		\node [style=none] (4) at (-0.5, -0.25) {};
		\node [style=none] (5) at (0, 0.5) {};
	\end{pgfonlayer}
	\begin{pgfonlayer}{edgelayer}
		\draw [style=su2 puncture strand] (0.center) to (2.center);
		\draw [style=su2 puncture strand] (3.center) to (1.center);
		\draw [style=brane, in=90, out=-90, looseness=1.25] (5.center) to (4.center);
	\end{pgfonlayer}
\end{tikzpicture}}} \end{pmatrix}}
    \theta_+\{-1\} \oplus \theta_-\{-1\}
    \xrightarrow{\begin{pmatrix} \vcenter{\hbox{\begin{tikzpicture}[scale=.5]
	\begin{pgfonlayer}{nodelayer}
		\node [style=none] (0) at (-0.25, 0.5) {};
		\node [style=none] (1) at (0.25, 0.5) {};
		\node [style=none] (2) at (0.25, -0.25) {};
		\node [style=none] (3) at (-0.25, -0.25) {};
		\node [style=none] (4) at (0, -0.25) {};
		\node [style=none] (5) at (0.5, 0.5) {};
	\end{pgfonlayer}
	\begin{pgfonlayer}{edgelayer}
		\draw [style=su2 puncture strand] (0.center) to (2.center);
		\draw [style=su2 puncture strand] (3.center) to (1.center);
		\draw [style=brane] (5.center) to (4.center);
	\end{pgfonlayer}
\end{tikzpicture}}} \\ \vcenter{\hbox{\begin{tikzpicture}[scale=.5]
	\begin{pgfonlayer}{nodelayer}
		\node [style=none] (0) at (-0.25, 0.5) {};
		\node [style=none] (1) at (0.25, 0.5) {};
		\node [style=none] (2) at (0.25, -0.25) {};
		\node [style=none] (3) at (-0.25, -0.25) {};
		\node [style=none] (4) at (0, -0.25) {};
		\node [style=none] (5) at (-0.5, 0.5) {};
	\end{pgfonlayer}
	\begin{pgfonlayer}{edgelayer}
		\draw [style=su2 puncture strand] (0.center) to (2.center);
		\draw [style=su2 puncture strand] (3.center) to (1.center);
		\draw [style=brane] (5.center) to (4.center);
	\end{pgfonlayer}
\end{tikzpicture}}} \end{pmatrix}}
    \mathbb{B}_{\tau(\pi)}(\theta)
\end{equation}
\end{proof}

We will later be interested in the action of braiding on complexes like the following: 
\begin{equation}\label{eqn:Uminus}
    \upsilon_- := \big\{ \theta\{-1\} \xrightarrow{\begin{tikzpicture}[scale=.5]
	\begin{pgfonlayer}{nodelayer}
		\node [style=none] (0) at (-0.25, 0.5) {};
		\node [style=none] (1) at (0.25, 0.5) {};
		\node [style=none] (2) at (-0.25, -0.25) {};
		\node [style=none] (3) at (0.25, -0.25) {};
		\node [style=none] (4) at (-0.5, -0.25) {};
		\node [style=none] (5) at (0, 0.5) {};
	\end{pgfonlayer}
	\begin{pgfonlayer}{edgelayer}
		\draw [style=su2 puncture strand] (0.center) to (2.center);
		\draw [style=su2 puncture strand] (3.center) to (1.center);
		\draw [style=brane, in=90, out=-90, looseness=1.25] (5.center) to (4.center);
	\end{pgfonlayer}
\end{tikzpicture}} \theta_- \big\}
\end{equation}
and
\begin{equation}\label{eqn:Uplus}
    \upsilon_+ := \big\{ \theta_+\{-1\} \xrightarrow{\begin{tikzpicture}[scale=.5]
	\begin{pgfonlayer}{nodelayer}
		\node [style=none] (0) at (-0.25, 0.5) {};
		\node [style=none] (1) at (0.25, 0.5) {};
		\node [style=none] (2) at (-0.25, -0.25) {};
		\node [style=none] (3) at (0.25, -0.25) {};
		\node [style=none] (4) at (0, -0.25) {};
		\node [style=none] (5) at (0.5, 0.5) {};
	\end{pgfonlayer}
	\begin{pgfonlayer}{edgelayer}
		\draw [style=su2 puncture strand] (0.center) to (2.center);
		\draw [style=su2 puncture strand] (3.center) to (1.center);
		\draw [style=brane, in=90, out=-90, looseness=1.25] (5.center) to (4.center);
	\end{pgfonlayer}
\end{tikzpicture}} \theta \big\}.
\end{equation}
The modules $\upsilon_+$ and $\upsilon_-$ are examples of Webster's standard modules, defined in \cite[Def. 5.1]{webster}.

\begin{proposition}\label{braiding U}
    Braiding sends $\upsilon_+$ to $\upsilon_-\{-1\}$.
\end{proposition}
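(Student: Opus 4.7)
The plan is to apply the exact functor $\mathbb{B}_{\tau(\pi)}$ termwise to the two-term complex $\upsilon_+ = \{\theta_+\{-1\} \xrightarrow{f} \theta\}$ and simplify.

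First I would note that $\mathbb{B}_{\tau(\pi)}(\theta_+) \cong \theta_+$: since $\theta_+$ sits outside the pair $\pi$, any diagram representing an element of the braiding bimodule with $\theta_+$ at the bottom can be isotoped so that the unique black strand runs straight up on the far right of both red strands, incurring no crossings and no grading shift. Combining this with the preceding lemma's resolution of $\mathbb{B}_{\tau(\pi)}(\theta)$ gives
\[
\mathbb{B}_{\tau(\pi)}(\upsilon_+) \;\simeq\; \Bigl\{\,\theta_+\{-1\} \xrightarrow{\;g\;} \bigl\{\theta\{-2\} \to \theta_+\{-1\} \oplus \theta_-\{-1\}\bigr\}\,\Bigr\}
\]
with $g := \mathbb{B}_{\tau(\pi)}(f)$.

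To identify $g$ I would rely on grading constraints. In the appropriate cohomological convention $g$ reduces to a grading-preserving map $\theta_+\{-1\} \to \theta_+\{-1\} \oplus \theta_-\{-1\}$. Any component into $\theta_-\{-1\}$ would require a KLRW morphism $\theta_+ \to \theta_-$ of $J$-degree $0$ and $C_1$-degree $0$; but the black strand must cross both red strands of $\pi$, forcing $J$-degree at least $2$. So that component vanishes, and the remaining component $\theta_+\{-1\} \to \theta_+\{-1\}$ is $c \cdot \mathrm{id}$ for some $c \in \mathbb{Z}$, the only endomorphisms in $J$-degree $0$ being integer multiples of the identity. That $c = \pm 1$ follows either from a direct diagrammatic check---unwinding the bimodule composition on the diagram of $f$ and simplifying using the relations of Figures \ref{fig:KLRW} and \ref{fig:KLRW-braiding}---or, structurally, from the fact that $\mathbb{B}_{\tau(\pi)}$ is an autoequivalence of the perfect module category over $\mathbb{Z}$: the resulting total complex must reduce to a bounded complex of Yoneda representables, which forces the $\theta_+\{-1\}$ layers to cancel exactly.

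Given $c = \pm 1$, the acyclic subcomplex $\theta_+\{-1\} \xrightarrow{\pm\,\mathrm{id}} \theta_+\{-1\}$ contracts, leaving precisely $\{\theta\{-2\} \to \theta_-\{-1\}\} = \upsilon_-\{-1\}$. The main obstacle is the scalar step: the grading argument trivially narrows $g$ down to a single integer acting on $\theta_+\{-1\}$, but confirming that integer is a unit---so the cancellation is legitimate over $\mathbb{Z}$---is what genuinely requires diagrammatic input.
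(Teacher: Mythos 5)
Your overall skeleton---apply $\mathbb{B}_{\tau(\pi)}$ termwise, use the $J$- and $C_1$-gradings to kill the component of $\mathbb{B}_{\tau(\pi)}(f)$ landing on $\theta_-\{-1\}$, and then cancel the two copies of $\theta_+\{-1\}$---is a reasonable alternative route, and the grading step itself is sound. But the genuine gap is exactly where you locate it: showing that the remaining scalar $c$ on the $\theta_+\{-1\}$ component is a unit. Your fallback ``structural'' argument does not work: Webster's theorem that the braiding bimodule is perfect already guarantees that your total complex is a bounded complex of Yoneda representables for \emph{any} value of $c$ (for instance $\theta_+\{-1\}\xrightarrow{2\cdot\mathrm{id}}\theta_+\{-1\}$ is a perfectly good perfect complex), so neither perfectness nor the fact that $\mathbb{B}_{\tau(\pi)}$ is an autoequivalence forces the $\theta_+\{-1\}$ layers to cancel, nor imposes any unit condition on $c$. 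If $c$ were $0$ or $2$, your complex would still be perfect but would not be isomorphic to $\upsilon_-\{-1\}$; ruling this out is the entire content of the proposition at this stage, and your ``direct diagrammatic check'' is only asserted, not carried out.

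The paper avoids ever having to determine an unknown scalar by using a different decomposition: via the exact triangle \eqref{webster exact triangle} it writes $\mathbb{B}_{\tau(\pi)}(\theta)$ as the cone on $\theta \to S_\theta$, so that $\mathbb{B}_{\tau(\pi)}(f)$ is literally $f$ followed by the inclusion of $\theta$ into that cone (braiding acts by the identity on the crossing diagram). Substituting the resolution \eqref{eqn:simple resolution} of $S_\theta$ then produces an explicit correction term $\theta_+\{-1\}\to\theta_+\{-1\}$ which is the identity on the nose, because $f$ coincides with a component of the resolution differential; the two Gaussian eliminations are then unambiguous over $\Z$ and yield $\upsilon_-\{-1\}$ with all maps explicit (which the paper also needs later, in Proposition \ref{braiding U map}). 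To repair your argument you should either actually perform the diagrammatic computation of $\mathbb{B}_{\tau(\pi)}(f)$ in the braiding bimodule (e.g.\ pushing the red crossing to the bottom of the diagrams, as in the paper's second derivation of $\mathbb{B}_{\tau(\pi)}(\theta)$) and verify the coefficient is $\pm 1$, or adopt the cone-over-$\theta\to S_\theta$ presentation, in which no undetermined scalar arises.
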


\begin{proof}
    Braiding acts by the identity on $\theta_+$, sends $\theta$ to the complex in equation \ref{eqn:braiding theta}, and acts by the identity on the morphism $\begin{tikzpicture}[scale=.5]
	\begin{pgfonlayer}{nodelayer}
		\node [style=none] (0) at (-0.25, 0.5) {};
		\node [style=none] (1) at (0.25, 0.5) {};
		\node [style=none] (2) at (-0.25, -0.25) {};
		\node [style=none] (3) at (0.25, -0.25) {};
		\node [style=none] (4) at (0, -0.25) {};
		\node [style=none] (5) at (0.5, 0.5) {};
	\end{pgfonlayer}
	\begin{pgfonlayer}{edgelayer}
		\draw [style=su2 puncture strand] (0.center) to (2.center);
		\draw [style=su2 puncture strand] (3.center) to (1.center);
		\draw [style=brane, in=90, out=-90, looseness=1.25] (5.center) to (4.center);
	\end{pgfonlayer}
\end{tikzpicture}$:
    \begin{equation}
        \mathbb{B}_{\tau(\pi)}(\upsilon_+) \cong \bigg\{ \theta_+\{-1\} \xrightarrow{\begin{tikzpicture}[scale=.5]
	\begin{pgfonlayer}{nodelayer}
		\node [style=none] (0) at (-0.25, 0.5) {};
		\node [style=none] (1) at (0.25, 0.5) {};
		\node [style=none] (2) at (-0.25, -0.25) {};
		\node [style=none] (3) at (0.25, -0.25) {};
		\node [style=none] (4) at (0, -0.25) {};
		\node [style=none] (5) at (0.5, 0.5) {};
	\end{pgfonlayer}
	\begin{pgfonlayer}{edgelayer}
		\draw [style=su2 puncture strand] (0.center) to (2.center);
		\draw [style=su2 puncture strand] (3.center) to (1.center);
		\draw [style=brane, in=90, out=-90, looseness=1.25] (5.center) to (4.center);
	\end{pgfonlayer}
\end{tikzpicture}} \theta \xrightarrow{\vcenter{\hbox{\includegraphics[scale=.7]{su2_simple.pdf}}}} S_\theta \bigg\}
    \end{equation}
    After resolving $S_\theta$, the resulting complex is
    \begin{align}
        \mathbb{B}_{\tau(\pi)}(U_+) &\cong \bigg\{ \theta\{-2\} \oplus \theta_+\{-1\}
        \xrightarrow{\begin{pmatrix} 0 & -\vcenter{\hbox{\begin{tikzpicture}[scale=.5]
	\begin{pgfonlayer}{nodelayer}
		\node [style=none] (0) at (-0.25, 0.5) {};
		\node [style=none] (1) at (0.25, 0.5) {};
		\node [style=none] (2) at (-0.25, -0.25) {};
		\node [style=none] (3) at (0.25, -0.25) {};
		\node [style=none] (4) at (0.5, -0.25) {};
		\node [style=none] (5) at (0, 0.5) {};
	\end{pgfonlayer}
	\begin{pgfonlayer}{edgelayer}
		\draw [style=su2 puncture strand] (0.center) to (2.center);
		\draw [style=su2 puncture strand] (3.center) to (1.center);
		\draw [style=brane, in=90, out=-90, looseness=1.25] (5.center) to (4.center);
	\end{pgfonlayer}
\end{tikzpicture}}} & \vcenter{\hbox{\begin{tikzpicture}[scale=.5]
	\begin{pgfonlayer}{nodelayer}
		\node [style=none] (0) at (-0.25, 0.5) {};
		\node [style=none] (1) at (0.25, 0.5) {};
		\node [style=none] (2) at (-0.25, -0.25) {};
		\node [style=none] (3) at (0.25, -0.25) {};
		\node [style=none] (4) at (-0.5, -0.25) {};
		\node [style=none] (5) at (0, 0.5) {};
	\end{pgfonlayer}
	\begin{pgfonlayer}{edgelayer}
		\draw [style=su2 puncture strand] (0.center) to (2.center);
		\draw [style=su2 puncture strand] (3.center) to (1.center);
		\draw [style=brane, in=90, out=-90, looseness=1.25] (5.center) to (4.center);
	\end{pgfonlayer}
\end{tikzpicture}}} \\
        \vcenter{\hbox{\begin{tikzpicture}[scale=.5]
	\begin{pgfonlayer}{nodelayer}
		\node [style=none] (0) at (-0.25, 0.5) {};
		\node [style=none] (1) at (0.25, 0.5) {};
		\node [style=none] (2) at (-0.25, -0.25) {};
		\node [style=none] (3) at (0.25, -0.25) {};
		\node [style=none] (4) at (0, -0.25) {};
		\node [style=none] (5) at (0.5, 0.5) {};
	\end{pgfonlayer}
	\begin{pgfonlayer}{edgelayer}
		\draw [style=su2 puncture strand] (0.center) to (2.center);
		\draw [style=su2 puncture strand] (3.center) to (1.center);
		\draw [style=brane, in=90, out=-90, looseness=1.25] (5.center) to (4.center);
	\end{pgfonlayer}
\end{tikzpicture}}} & -\vcenter{\hbox{\begin{tikzpicture}[scale=.5]
	\begin{pgfonlayer}{nodelayer}
		\node [style=none] (0) at (-0.25, 0.5) {};
		\node [style=none] (1) at (0.25, 0.5) {};
		\node [style=none] (2) at (-0.25, -0.25) {};
		\node [style=none] (3) at (0.25, -0.25) {};
		\node [style=none] (4) at (0, -0.25) {};
		\node [style=none] (5) at (0, 0.5) {};
	\end{pgfonlayer}
	\begin{pgfonlayer}{edgelayer}
		\draw [style=su2 puncture strand] (0.center) to (2.center);
		\draw [style=brane] (3.center) to (1.center);
		\draw [style=su2 puncture strand, in=90, out=-90, looseness=1.25] (5.center) to (4.center);
	\end{pgfonlayer}
\end{tikzpicture}}} & 0 \end{pmatrix}}
        \theta \oplus \theta_+\{-1\} \oplus \theta_-
        \xrightarrow{\begin{pmatrix} \vcenter{\hbox{\begin{tikzpicture}[scale=.5]
	\begin{pgfonlayer}{nodelayer}
		\node [style=none] (0) at (-0.25, 0.5) {};
		\node [style=none] (1) at (0.25, 0.5) {};
		\node [style=none] (2) at (-0.25, -0.25) {};
		\node [style=none] (3) at (0.25, -0.25) {};
		\node [style=none] (4) at (0, -0.25) {};
		\node [style=none] (5) at (0, 0.5) {};
	\end{pgfonlayer}
	\begin{pgfonlayer}{edgelayer}
		\draw [style=su2 puncture strand] (0.center) to (2.center);
		\draw [style=su2 puncture strand] (3.center) to (1.center);
		\draw [style=brane, in=90, out=-90, looseness=1.25] (5.center) to (4.center);
	\end{pgfonlayer}
\end{tikzpicture}}} \\ \vcenter{\hbox{\begin{tikzpicture}[scale=.5]
	\begin{pgfonlayer}{nodelayer}
		\node [style=none] (0) at (-0.25, 0.5) {};
		\node [style=none] (1) at (0.25, 0.5) {};
		\node [style=none] (2) at (-0.25, -0.25) {};
		\node [style=none] (3) at (0.25, -0.25) {};
		\node [style=none] (4) at (0, -0.25) {};
		\node [style=none] (5) at (0.5, 0.5) {};
	\end{pgfonlayer}
	\begin{pgfonlayer}{edgelayer}
		\draw [style=su2 puncture strand] (0.center) to (2.center);
		\draw [style=su2 puncture strand] (3.center) to (1.center);
		\draw [style=brane, in=90, out=-90, looseness=1.25] (5.center) to (4.center);
	\end{pgfonlayer}
\end{tikzpicture}}} \\ \vcenter{\hbox{\begin{tikzpicture}[scale=.5]
	\begin{pgfonlayer}{nodelayer}
		\node [style=none] (0) at (-0.25, 0.5) {};
		\node [style=none] (1) at (0.25, 0.5) {};
		\node [style=none] (2) at (-0.25, -0.25) {};
		\node [style=none] (3) at (0.25, -0.25) {};
		\node [style=none] (4) at (0, -0.25) {};
		\node [style=none] (5) at (-0.5, 0.5) {};
	\end{pgfonlayer}
	\begin{pgfonlayer}{edgelayer}
		\draw [style=su2 puncture strand] (0.center) to (2.center);
		\draw [style=su2 puncture strand] (3.center) to (1.center);
		\draw [style=brane, in=90, out=-90, looseness=1.25] (5.center) to (4.center);
	\end{pgfonlayer}
\end{tikzpicture}}} \end{pmatrix}}
        \theta \bigg\} \\
        &\cong \bigg\{ \theta\{-2\} \oplus \theta_+\{-1\}
        \xrightarrow{\begin{pmatrix} -\vcenter{\hbox{\begin{tikzpicture}[scale=.5]
	\begin{pgfonlayer}{nodelayer}
		\node [style=none] (0) at (-0.25, 0.5) {};
		\node [style=none] (1) at (0.25, 0.5) {};
		\node [style=none] (2) at (-0.25, -0.25) {};
		\node [style=none] (3) at (0.25, -0.25) {};
		\node [style=none] (4) at (0.5, -0.25) {};
		\node [style=none] (5) at (0, 0.5) {};
	\end{pgfonlayer}
	\begin{pgfonlayer}{edgelayer}
		\draw [style=su2 puncture strand] (0.center) to (2.center);
		\draw [style=su2 puncture strand] (3.center) to (1.center);
		\draw [style=brane, in=90, out=-90, looseness=1.25] (5.center) to (4.center);
	\end{pgfonlayer}
\end{tikzpicture}}} & \vcenter{\hbox{\begin{tikzpicture}[scale=.5]
	\begin{pgfonlayer}{nodelayer}
		\node [style=none] (0) at (-0.25, 0.5) {};
		\node [style=none] (1) at (0.25, 0.5) {};
		\node [style=none] (2) at (-0.25, -0.25) {};
		\node [style=none] (3) at (0.25, -0.25) {};
		\node [style=none] (4) at (-0.5, -0.25) {};
		\node [style=none] (5) at (0, 0.5) {};
	\end{pgfonlayer}
	\begin{pgfonlayer}{edgelayer}
		\draw [style=su2 puncture strand] (0.center) to (2.center);
		\draw [style=su2 puncture strand] (3.center) to (1.center);
		\draw [style=brane, in=90, out=-90, looseness=1.25] (5.center) to (4.center);
	\end{pgfonlayer}
\end{tikzpicture}}} \\
        -\vcenter{\hbox{\begin{tikzpicture}[scale=.5]
	\begin{pgfonlayer}{nodelayer}
		\node [style=none] (0) at (-0.25, 0.5) {};
		\node [style=none] (1) at (0.25, 0.5) {};
		\node [style=none] (2) at (-0.25, -0.25) {};
		\node [style=none] (3) at (0.25, -0.25) {};
		\node [style=none] (4) at (0, -0.25) {};
		\node [style=none] (5) at (0, 0.5) {};
	\end{pgfonlayer}
	\begin{pgfonlayer}{edgelayer}
		\draw [style=su2 puncture strand] (0.center) to (2.center);
		\draw [style=brane] (3.center) to (1.center);
		\draw [style=su2 puncture strand, in=90, out=-90, looseness=1.25] (5.center) to (4.center);
	\end{pgfonlayer}
\end{tikzpicture}}} & 0 \end{pmatrix}}
        \theta_+\{-1\} \oplus \theta_-\{-1\} \bigg\} \\
        &\cong \bigg\{ \theta\{-2\}
        \xrightarrow{\begin{tikzpicture}[scale=.5]
	\begin{pgfonlayer}{nodelayer}
		\node [style=none] (0) at (-0.25, 0.5) {};
		\node [style=none] (1) at (0.25, 0.5) {};
		\node [style=none] (2) at (-0.25, -0.25) {};
		\node [style=none] (3) at (0.25, -0.25) {};
		\node [style=none] (4) at (-0.5, -0.25) {};
		\node [style=none] (5) at (0, 0.5) {};
	\end{pgfonlayer}
	\begin{pgfonlayer}{edgelayer}
		\draw [style=su2 puncture strand] (0.center) to (2.center);
		\draw [style=su2 puncture strand] (3.center) to (1.center);
		\draw [style=brane, in=90, out=-90, looseness=1.25] (5.center) to (4.center);
	\end{pgfonlayer}
\end{tikzpicture}}
        \theta_-\{-1\} \bigg\},
    \end{align}
 which is $\upsilon_-\{-1\}$.
\end{proof}

Next we want to consider maps to and from $\upsilon_-$ and $\upsilon_+$. When there is no ambiguity, we will denote these maps using strand diagrams and it is understood that we mean maps to and from the complexes \eqref{eqn:Uminus} and \eqref{eqn:Uplus}. For example, $\vcenter{\hbox{\begin{tikzpicture}[scale=.5]
	\begin{pgfonlayer}{nodelayer}
		\node [style=none] (0) at (-0.25, 0.5) {};
		\node [style=none] (1) at (0.25, 0.5) {};
		\node [style=none] (2) at (-0.25, -0.25) {};
		\node [style=none] (3) at (0.25, -0.25) {};
		\node [style=none] (4) at (0, -0.25) {};
		\node [style=none] (5) at (0, 0.5) {};
	\end{pgfonlayer}
	\begin{pgfonlayer}{edgelayer}
		\draw [style=su2 puncture strand] (0.center) to (2.center);
		\draw [style=brane] (3.center) to (1.center);
		\draw [style=su2 puncture strand, in=90, out=-90, looseness=1.25] (5.center) to (4.center);
	\end{pgfonlayer}
\end{tikzpicture}}}:\upsilon_+\to\theta_+\{-1\}$ denotes the chain map $f$ with components $f_0=0$ and $f_1 = \vcenter{\hbox{\begin{tikzpicture}[scale=.5]
	\begin{pgfonlayer}{nodelayer}
		\node [style=none] (0) at (-0.25, 0.5) {};
		\node [style=none] (1) at (0.25, 0.5) {};
		\node [style=none] (2) at (-0.25, -0.25) {};
		\node [style=none] (3) at (0.25, -0.25) {};
		\node [style=none] (4) at (0, -0.25) {};
		\node [style=none] (5) at (0, 0.5) {};
	\end{pgfonlayer}
	\begin{pgfonlayer}{edgelayer}
		\draw [style=su2 puncture strand] (0.center) to (2.center);
		\draw [style=brane] (3.center) to (1.center);
		\draw [style=su2 puncture strand, in=90, out=-90, looseness=1.25] (5.center) to (4.center);
	\end{pgfonlayer}
\end{tikzpicture}}}$.

\begin{proposition}\label{braiding U map}
    Braiding sends the map $\vcenter{\hbox{\begin{tikzpicture}[scale=.5]
	\begin{pgfonlayer}{nodelayer}
		\node [style=none] (0) at (-0.25, 0.5) {};
		\node [style=none] (1) at (0.25, 0.5) {};
		\node [style=none] (2) at (-0.25, -0.25) {};
		\node [style=none] (3) at (0.25, -0.25) {};
		\node [style=none] (4) at (0, -0.25) {};
		\node [style=none] (5) at (0, 0.5) {};
	\end{pgfonlayer}
	\begin{pgfonlayer}{edgelayer}
		\draw [style=su2 puncture strand] (0.center) to (2.center);
		\draw [style=brane] (3.center) to (1.center);
		\draw [style=su2 puncture strand, in=90, out=-90, looseness=1.25] (5.center) to (4.center);
	\end{pgfonlayer}
\end{tikzpicture}}}:\upsilon_+\to\theta_+\{-1\}$ to $\vcenter{\hbox{\begin{tikzpicture}[scale=.5]
	\begin{pgfonlayer}{nodelayer}
		\node [style=none] (0) at (-0.25, 0.5) {};
		\node [style=none] (1) at (0.25, 0.5) {};
		\node [style=none] (2) at (-0.25, -0.25) {};
		\node [style=none] (3) at (0.25, -0.25) {};
		\node [style=none] (4) at (0.5, -0.25) {};
		\node [style=none] (5) at (0, 0.5) {};
	\end{pgfonlayer}
	\begin{pgfonlayer}{edgelayer}
		\draw [style=su2 puncture strand] (0.center) to (2.center);
		\draw [style=su2 puncture strand] (3.center) to (1.center);
		\draw [style=brane, in=90, out=-90, looseness=1.25] (5.center) to (4.center);
	\end{pgfonlayer}
\end{tikzpicture}}}:\upsilon_-\{-1\}\to\theta_+\{-1\}$.
\end{proposition}

\begin{proof}
The cone of this map is
\begin{equation}
    \big\{ \upsilon_+ \xrightarrow{\begin{tikzpicture}[scale=.5]
	\begin{pgfonlayer}{nodelayer}
		\node [style=none] (0) at (-0.25, 0.5) {};
		\node [style=none] (1) at (0.25, 0.5) {};
		\node [style=none] (2) at (-0.25, -0.25) {};
		\node [style=none] (3) at (0.25, -0.25) {};
		\node [style=none] (4) at (0, -0.25) {};
		\node [style=none] (5) at (0, 0.5) {};
	\end{pgfonlayer}
	\begin{pgfonlayer}{edgelayer}
		\draw [style=su2 puncture strand] (0.center) to (2.center);
		\draw [style=brane] (3.center) to (1.center);
		\draw [style=su2 puncture strand, in=90, out=-90, looseness=1.25] (5.center) to (4.center);
	\end{pgfonlayer}
\end{tikzpicture}} \theta_+\{-1\} \big\} =
    \big\{ \theta_+\{-1\} \xrightarrow{\begin{pmatrix}
        \vcenter{\hbox{\begin{tikzpicture}[scale=.5]
	\begin{pgfonlayer}{nodelayer}
		\node [style=none] (0) at (-0.25, 0.5) {};
		\node [style=none] (1) at (0.25, 0.5) {};
		\node [style=none] (2) at (-0.25, -0.25) {};
		\node [style=none] (3) at (0.25, -0.25) {};
		\node [style=none] (4) at (0, -0.25) {};
		\node [style=none] (5) at (0.5, 0.5) {};
	\end{pgfonlayer}
	\begin{pgfonlayer}{edgelayer}
		\draw [style=su2 puncture strand] (0.center) to (2.center);
		\draw [style=su2 puncture strand] (3.center) to (1.center);
		\draw [style=brane, in=90, out=-90, looseness=1.25] (5.center) to (4.center);
	\end{pgfonlayer}
\end{tikzpicture}}} & \vcenter{\hbox{\begin{tikzpicture}[scale=.5]
	\begin{pgfonlayer}{nodelayer}
		\node [style=none] (0) at (-0.25, 0.5) {};
		\node [style=none] (1) at (0.25, 0.5) {};
		\node [style=none] (2) at (-0.25, -0.25) {};
		\node [style=none] (3) at (0.25, -0.25) {};
		\node [style=none] (4) at (0, -0.25) {};
		\node [style=none] (5) at (0, 0.5) {};
	\end{pgfonlayer}
	\begin{pgfonlayer}{edgelayer}
		\draw [style=su2 puncture strand] (0.center) to (2.center);
		\draw [style=brane] (3.center) to (1.center);
		\draw [style=su2 puncture strand, in=90, out=-90, looseness=1.25] (5.center) to (4.center);
	\end{pgfonlayer}
\end{tikzpicture}}}
    \end{pmatrix}} \theta \oplus \theta_+\{-1\} \big\}
\end{equation}
Acting with braiding gives
\begin{align}
    & \mathbb{B}_{\tau(\pi)}\bigg( \upsilon_+ \xrightarrow{\begin{tikzpicture}[scale=.5]
	\begin{pgfonlayer}{nodelayer}
		\node [style=none] (0) at (-0.25, 0.5) {};
		\node [style=none] (1) at (0.25, 0.5) {};
		\node [style=none] (2) at (-0.25, -0.25) {};
		\node [style=none] (3) at (0.25, -0.25) {};
		\node [style=none] (4) at (0, -0.25) {};
		\node [style=none] (5) at (0, 0.5) {};
	\end{pgfonlayer}
	\begin{pgfonlayer}{edgelayer}
		\draw [style=su2 puncture strand] (0.center) to (2.center);
		\draw [style=brane] (3.center) to (1.center);
		\draw [style=su2 puncture strand, in=90, out=-90, looseness=1.25] (5.center) to (4.center);
	\end{pgfonlayer}
\end{tikzpicture}} \theta_+\{-1\} \bigg) \\
    &\cong \bigg\{ \theta_+\{-1\} \xrightarrow{\begin{pmatrix}
        \vcenter{\hbox{\begin{tikzpicture}[scale=.5]
	\begin{pgfonlayer}{nodelayer}
		\node [style=none] (0) at (-0.25, 0.5) {};
		\node [style=none] (1) at (0.25, 0.5) {};
		\node [style=none] (2) at (-0.25, -0.25) {};
		\node [style=none] (3) at (0.25, -0.25) {};
		\node [style=none] (4) at (0, -0.25) {};
		\node [style=none] (5) at (0.5, 0.5) {};
	\end{pgfonlayer}
	\begin{pgfonlayer}{edgelayer}
		\draw [style=su2 puncture strand] (0.center) to (2.center);
		\draw [style=su2 puncture strand] (3.center) to (1.center);
		\draw [style=brane, in=90, out=-90, looseness=1.25] (5.center) to (4.center);
	\end{pgfonlayer}
\end{tikzpicture}}} & \vcenter{\hbox{\begin{tikzpicture}[scale=.5]
	\begin{pgfonlayer}{nodelayer}
		\node [style=none] (0) at (-0.25, 0.5) {};
		\node [style=none] (1) at (0.25, 0.5) {};
		\node [style=none] (2) at (-0.25, -0.25) {};
		\node [style=none] (3) at (0.25, -0.25) {};
		\node [style=none] (4) at (0, -0.25) {};
		\node [style=none] (5) at (0, 0.5) {};
	\end{pgfonlayer}
	\begin{pgfonlayer}{edgelayer}
		\draw [style=su2 puncture strand] (0.center) to (2.center);
		\draw [style=brane] (3.center) to (1.center);
		\draw [style=su2 puncture strand, in=90, out=-90, looseness=1.25] (5.center) to (4.center);
	\end{pgfonlayer}
\end{tikzpicture}}}
    \end{pmatrix}}
    \theta \oplus \theta_+ \{-1\}
    \xrightarrow{\begin{pmatrix}
        \vcenter{\hbox{\includegraphics[scale=.7]{su2_simple.pdf}}} \\ 0
    \end{pmatrix}} S_\theta \bigg\} \\
    &\cong \bigg\{ \theta\{-2\} \oplus \theta_+\{-1\}
        \xrightarrow{\begin{pmatrix}
        0 & -\vcenter{\hbox{\begin{tikzpicture}[scale=.5]
	\begin{pgfonlayer}{nodelayer}
		\node [style=none] (0) at (-0.25, 0.5) {};
		\node [style=none] (1) at (0.25, 0.5) {};
		\node [style=none] (2) at (-0.25, -0.25) {};
		\node [style=none] (3) at (0.25, -0.25) {};
		\node [style=none] (4) at (0.5, -0.25) {};
		\node [style=none] (5) at (0, 0.5) {};
	\end{pgfonlayer}
	\begin{pgfonlayer}{edgelayer}
		\draw [style=su2 puncture strand] (0.center) to (2.center);
		\draw [style=su2 puncture strand] (3.center) to (1.center);
		\draw [style=brane, in=90, out=-90, looseness=1.25] (5.center) to (4.center);
	\end{pgfonlayer}
\end{tikzpicture}}} & \vcenter{\hbox{\begin{tikzpicture}[scale=.5]
	\begin{pgfonlayer}{nodelayer}
		\node [style=none] (0) at (-0.25, 0.5) {};
		\node [style=none] (1) at (0.25, 0.5) {};
		\node [style=none] (2) at (-0.25, -0.25) {};
		\node [style=none] (3) at (0.25, -0.25) {};
		\node [style=none] (4) at (-0.5, -0.25) {};
		\node [style=none] (5) at (0, 0.5) {};
	\end{pgfonlayer}
	\begin{pgfonlayer}{edgelayer}
		\draw [style=su2 puncture strand] (0.center) to (2.center);
		\draw [style=su2 puncture strand] (3.center) to (1.center);
		\draw [style=brane, in=90, out=-90, looseness=1.25] (5.center) to (4.center);
	\end{pgfonlayer}
\end{tikzpicture}}} & 0 \\
        \vcenter{\hbox{\begin{tikzpicture}[scale=.5]
	\begin{pgfonlayer}{nodelayer}
		\node [style=none] (0) at (-0.25, 0.5) {};
		\node [style=none] (1) at (0.25, 0.5) {};
		\node [style=none] (2) at (-0.25, -0.25) {};
		\node [style=none] (3) at (0.25, -0.25) {};
		\node [style=none] (4) at (0, -0.25) {};
		\node [style=none] (5) at (0.5, 0.5) {};
	\end{pgfonlayer}
	\begin{pgfonlayer}{edgelayer}
		\draw [style=su2 puncture strand] (0.center) to (2.center);
		\draw [style=su2 puncture strand] (3.center) to (1.center);
		\draw [style=brane, in=90, out=-90, looseness=1.25] (5.center) to (4.center);
	\end{pgfonlayer}
\end{tikzpicture}}} & -\vcenter{\hbox{\begin{tikzpicture}[scale=.5]
	\begin{pgfonlayer}{nodelayer}
		\node [style=none] (0) at (-0.25, 0.5) {};
		\node [style=none] (1) at (0.25, 0.5) {};
		\node [style=none] (2) at (-0.25, -0.25) {};
		\node [style=none] (3) at (0.25, -0.25) {};
		\node [style=none] (4) at (0, -0.25) {};
		\node [style=none] (5) at (0, 0.5) {};
	\end{pgfonlayer}
	\begin{pgfonlayer}{edgelayer}
		\draw [style=su2 puncture strand] (0.center) to (2.center);
		\draw [style=brane] (3.center) to (1.center);
		\draw [style=su2 puncture strand, in=90, out=-90, looseness=1.25] (5.center) to (4.center);
	\end{pgfonlayer}
\end{tikzpicture}}} & 0 & \vcenter{\hbox{\begin{tikzpicture}[scale=.5]
	\begin{pgfonlayer}{nodelayer}
		\node [style=none] (0) at (-0.25, 0.5) {};
		\node [style=none] (1) at (0.25, 0.5) {};
		\node [style=none] (2) at (-0.25, -0.25) {};
		\node [style=none] (3) at (0.25, -0.25) {};
		\node [style=none] (4) at (0, -0.25) {};
		\node [style=none] (5) at (0, 0.5) {};
	\end{pgfonlayer}
	\begin{pgfonlayer}{edgelayer}
		\draw [style=su2 puncture strand] (0.center) to (2.center);
		\draw [style=brane] (3.center) to (1.center);
		\draw [style=su2 puncture strand, in=90, out=-90, looseness=1.25] (5.center) to (4.center);
	\end{pgfonlayer}
\end{tikzpicture}}}
        \end{pmatrix}}
        \theta \oplus \theta_+\{-1\} \oplus \theta_-\{-1\} \oplus \theta_+\{-1\}
        \xrightarrow{\begin{pmatrix} \vcenter{\hbox{\begin{tikzpicture}[scale=.5]
	\begin{pgfonlayer}{nodelayer}
		\node [style=none] (0) at (-0.25, 0.5) {};
		\node [style=none] (1) at (0.25, 0.5) {};
		\node [style=none] (2) at (-0.25, -0.25) {};
		\node [style=none] (3) at (0.25, -0.25) {};
		\node [style=none] (4) at (0, -0.25) {};
		\node [style=none] (5) at (0, 0.5) {};
	\end{pgfonlayer}
	\begin{pgfonlayer}{edgelayer}
		\draw [style=su2 puncture strand] (0.center) to (2.center);
		\draw [style=su2 puncture strand] (3.center) to (1.center);
		\draw [style=brane, in=90, out=-90, looseness=1.25] (5.center) to (4.center);
	\end{pgfonlayer}
\end{tikzpicture}}} \\ \vcenter{\hbox{\begin{tikzpicture}[scale=.5]
	\begin{pgfonlayer}{nodelayer}
		\node [style=none] (0) at (-0.25, 0.5) {};
		\node [style=none] (1) at (0.25, 0.5) {};
		\node [style=none] (2) at (-0.25, -0.25) {};
		\node [style=none] (3) at (0.25, -0.25) {};
		\node [style=none] (4) at (0, -0.25) {};
		\node [style=none] (5) at (0.5, 0.5) {};
	\end{pgfonlayer}
	\begin{pgfonlayer}{edgelayer}
		\draw [style=su2 puncture strand] (0.center) to (2.center);
		\draw [style=su2 puncture strand] (3.center) to (1.center);
		\draw [style=brane, in=90, out=-90, looseness=1.25] (5.center) to (4.center);
	\end{pgfonlayer}
\end{tikzpicture}}} \\ \vcenter{\hbox{\begin{tikzpicture}[scale=.5]
	\begin{pgfonlayer}{nodelayer}
		\node [style=none] (0) at (-0.25, 0.5) {};
		\node [style=none] (1) at (0.25, 0.5) {};
		\node [style=none] (2) at (-0.25, -0.25) {};
		\node [style=none] (3) at (0.25, -0.25) {};
		\node [style=none] (4) at (0, -0.25) {};
		\node [style=none] (5) at (-0.5, 0.5) {};
	\end{pgfonlayer}
	\begin{pgfonlayer}{edgelayer}
		\draw [style=su2 puncture strand] (0.center) to (2.center);
		\draw [style=su2 puncture strand] (3.center) to (1.center);
		\draw [style=brane, in=90, out=-90, looseness=1.25] (5.center) to (4.center);
	\end{pgfonlayer}
\end{tikzpicture}}} \\ 0 \end{pmatrix}}
        \theta \bigg\} \\
    &\cong \bigg\{ \theta\{-2\}
        \xrightarrow{\begin{pmatrix}
        \vcenter{\hbox{\begin{tikzpicture}[scale=.5]
	\begin{pgfonlayer}{nodelayer}
		\node [style=none] (0) at (-0.25, 0.5) {};
		\node [style=none] (1) at (0.25, 0.5) {};
		\node [style=none] (2) at (-0.25, -0.25) {};
		\node [style=none] (3) at (0.25, -0.25) {};
		\node [style=none] (4) at (0.5, -0.25) {};
		\node [style=none] (5) at (0, 0.5) {};
	\end{pgfonlayer}
	\begin{pgfonlayer}{edgelayer}
		\draw [style=su2 puncture strand] (0.center) to (2.center);
		\draw [style=su2 puncture strand] (3.center) to (1.center);
		\draw [style=brane, in=90, out=-90, looseness=1.25] (5.center) to (4.center);
	\end{pgfonlayer}
\end{tikzpicture}}} & \vcenter{\hbox{\begin{tikzpicture}[scale=.5]
	\begin{pgfonlayer}{nodelayer}
		\node [style=none] (0) at (-0.25, 0.5) {};
		\node [style=none] (1) at (0.25, 0.5) {};
		\node [style=none] (2) at (-0.25, -0.25) {};
		\node [style=none] (3) at (0.25, -0.25) {};
		\node [style=none] (4) at (-0.5, -0.25) {};
		\node [style=none] (5) at (0, 0.5) {};
	\end{pgfonlayer}
	\begin{pgfonlayer}{edgelayer}
		\draw [style=su2 puncture strand] (0.center) to (2.center);
		\draw [style=su2 puncture strand] (3.center) to (1.center);
		\draw [style=brane, in=90, out=-90, looseness=1.25] (5.center) to (4.center);
	\end{pgfonlayer}
\end{tikzpicture}}}
        \end{pmatrix}}
        \theta_+\{-1\} \oplus \theta_-\{-1\} \bigg\} \\
    &\cong \big\{ \upsilon_-\{-1\} \xrightarrow{\begin{tikzpicture}[scale=.5]
	\begin{pgfonlayer}{nodelayer}
		\node [style=none] (0) at (-0.25, 0.5) {};
		\node [style=none] (1) at (0.25, 0.5) {};
		\node [style=none] (2) at (-0.25, -0.25) {};
		\node [style=none] (3) at (0.25, -0.25) {};
		\node [style=none] (4) at (0.5, -0.25) {};
		\node [style=none] (5) at (0, 0.5) {};
	\end{pgfonlayer}
	\begin{pgfonlayer}{edgelayer}
		\draw [style=su2 puncture strand] (0.center) to (2.center);
		\draw [style=su2 puncture strand] (3.center) to (1.center);
		\draw [style=brane, in=90, out=-90, looseness=1.25] (5.center) to (4.center);
	\end{pgfonlayer}
\end{tikzpicture}} \theta_+\{-1\} \big\},
\end{align}
so the map gets sent to $\vcenter{\hbox{\begin{tikzpicture}[scale=.5]
	\begin{pgfonlayer}{nodelayer}
		\node [style=none] (0) at (-0.25, 0.5) {};
		\node [style=none] (1) at (0.25, 0.5) {};
		\node [style=none] (2) at (-0.25, -0.25) {};
		\node [style=none] (3) at (0.25, -0.25) {};
		\node [style=none] (4) at (0.5, -0.25) {};
		\node [style=none] (5) at (0, 0.5) {};
	\end{pgfonlayer}
	\begin{pgfonlayer}{edgelayer}
		\draw [style=su2 puncture strand] (0.center) to (2.center);
		\draw [style=su2 puncture strand] (3.center) to (1.center);
		\draw [style=brane, in=90, out=-90, looseness=1.25] (5.center) to (4.center);
	\end{pgfonlayer}
\end{tikzpicture}}}$ from $\upsilon_-\{-1\}$ to $\theta_+\{-1\}$.
\end{proof}

\section{An easy object to braid} \label{Lambda object}

We will often denote objects with more than one black strand as ``products'' of objects with fewer black strands. What we mean is to place the black strands of both objects side by side among the same set of red strands. 
We do the same with the morphisms. It is always assumed that we place the black strands so as not to introduce any additional crossings. We will not use this notation if this is not possible.

For example, we write $\theta^n = \prod_{i=1}^n \theta$ for the module generated by
\begin{equation}
    \vcenter{\hbox{
    \begin{tikzpicture}[scale=0.5]
        \draw[style=su2 puncture strand] (0,0) -- (0,2);
        \draw[style=su2 puncture strand] (2,0) -- (2,2);
        \draw (.25,0) -- (.25,2);
        \draw (.5,0) -- (.5,2);
        \node at (1,1) {$\scriptstyle\cdots$};
        \draw (1.5,0) -- (1.5,2);
        \draw (1.75,0) -- (1.75,2);
        \node at (1,-.6) {$\underbrace{\hspace{.75cm}}_n$};
    \end{tikzpicture}}}
\end{equation}
and we write
$\upsilon_+ \times \theta_+^{n-1}$ for the object of $\mathcal{C}_{\bullet, n, F}$ coming from placing an additional $n-1$ black strands to the right of the objects whose cone defines $\upsilon_+$, and extending the map by the identity diagram on the new factors:
\begin{equation}
    \upsilon_+ \times \theta_+^{n-1} :=\big\{ \theta_+^{n} \xrightarrow{\begin{tikzpicture}[scale=.5]
	\begin{pgfonlayer}{nodelayer}
		\node [style=none] (0) at (-0.25, 0.5) {};
		\node [style=none] (1) at (0.25, 0.5) {};
		\node [style=none] (2) at (-0.25, -0.25) {};
		\node [style=none] (3) at (0.25, -0.25) {};
		\node [style=none] (4) at (0, -0.25) {};
		\node [style=none] (5) at (0.5, 0.5) {};
		\node [style=none] (6) at (0.75, 0.5) {};
		\node [style=none] (7) at (0.75, -0.25) {};
		\node [style=none] (8) at (1.25, 0.5) {};
		\node [style=none] (9) at (1.25, -0.25) {};
		\node [style=none] (10) at (1, 0.125) {$\scriptstyle\cdots$};
	\end{pgfonlayer}
	\begin{pgfonlayer}{edgelayer}
		\draw [style=su2 puncture strand] (0.center) to (2.center);
		\draw [style=su2 puncture strand] (3.center) to (1.center);
		\draw [style=brane, in=90, out=-90, looseness=1.25] (5.center) to (4.center);
		\draw (6.center) to (7.center);
		\draw (8.center) to (9.center);
	\end{pgfonlayer}
\end{tikzpicture}} \theta \times \theta_+^{n-1} \big\}.
\end{equation}

Define
\begin{equation}\label{eqn:T2_res}
    \Lambda_n := \left( \upsilon_+ \times \theta_+^{n-1} \right)^{\oplus n} \xrightarrow{d} \theta_+^n
\end{equation}
with
\begin{equation} \label{eqn:T2_res_map}
    \vcenter{\hbox{
    \begin{tikzpicture}[scale=0.5]
        \node at (-1.25,1) {$d_i=$};
        \draw[style=su2 puncture strand] (0,0) -- (0,2);
        \draw (2,0) to node[pos=0, below]{$i$} (1,2);
        \draw[style=su2 puncture strand] (0.5,0) -- (0.5,2);
        \draw (1.5,0) -- (1.5,2);
        \node at (2,1) {$\scriptstyle\cdots$};
        \draw (2.5,0) -- (2.5,2);
    \end{tikzpicture}}}
\end{equation}
and
\begin{equation}\label{eqn:BT2_res}
    \Lambda'_n := \left( \upsilon_- \times \theta_+^{n-1} \right)^{\oplus n} \xrightarrow{d'} \theta_+^n
\end{equation}
with
\begin{equation} \label{eqn:BT2_res_map}
    \vcenter{\hbox{
    \begin{tikzpicture}[scale=0.5]
        \node at (-1.25,1) {$d'_i=$};
        \draw[style=su2 puncture strand] (0,0) -- (0,2);
        \draw[style=su2 puncture strand] (1,0) -- (1,2);
        \draw (2,0) to node[pos=0, below]{$i$} (0.5,2);
        \draw (1.5,0) -- (1.5,2);
        \node at (2,1) {$\scriptstyle\cdots$};
        \draw (2.5,0) -- (2.5,2);
    \end{tikzpicture}}}
\end{equation}

\begin{proposition} \label{L_n braiding}
    $\mathbb{B}_{\tau(\pi)}(\Lambda_n)=\Lambda_n'$
\end{proposition}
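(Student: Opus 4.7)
The plan is to apply $\mathbb{B}_{\tau(\pi)}$ directly to the defining mapping cone of $\Lambda_n$ and recognize the result as $\Lambda_n'$, leveraging the single-black-strand calculations of Propositions \ref{braiding U} and \ref{braiding U map}. The key structural observation is that the braid $\tau(\pi)$ is localized near the pair $\pi$: diagrammatically it only touches strands that interact with the two red strands being swapped, so any black strand that stays on one side of $\pi$ is transported identically through the braid. Consequently $\mathbb{B}_{\tau(\pi)}$ commutes with direct sums, with the formation of cones (it is an autoequivalence), and with the operation $(-) \times \theta_+^{n-1}$ of appending $n-1$ extra $\theta_+$ strands to the right of $\pi$. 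In particular $\mathbb{B}_{\tau(\pi)}(\theta_+^n) = \theta_+^n$ and $\mathbb{B}_{\tau(\pi)}(\upsilon_+ \times \theta_+^{n-1}) = \upsilon_-\{-1\} \times \theta_+^{n-1}$ by Proposition \ref{braiding U}.

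With this in hand, the proof reduces to identifying the braided connecting map. By construction each component $d_i$ is the single-strand right-red crossing $\upsilon_+ \to \theta_+$ placed in the $i$-th slot and extended by the identity on the other $n-1$ strands. Applying the braiding slot-wise and invoking Proposition \ref{braiding U map}, each braided component is the corresponding single-strand crossing $\upsilon_-\{-1\} \to \theta_+$ in the $i$-th slot, extended by the identity elsewhere --- which is precisely the diagram $d_i'$ of \eqref{eqn:BT2_res_map}. Assembling the $n$ components yields
\begin{equation*}
\mathbb{B}_{\tau(\pi)}(\Lambda_n) \;=\; \Bigl\{(\upsilon_-\{-1\} \times \theta_+^{n-1})^{\oplus n} \xrightarrow{(d_1', \ldots, d_n')} \theta_+^n\Bigr\} \;=\; \Lambda_n',
\end{equation*}
once the global $\{-1\}$ shift coming from $\upsilon_-\{-1\}$ is absorbed into the cone bookkeeping, consistent with the conventions in \eqref{eqn:BT2_res}.

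The main obstacle is precisely this final bookkeeping step: checking that the shifts and signs produced by braiding match $d_i'$ on the nose, rather than only up to a nontrivial scalar or a chain homotopy. For this, Lemma \ref{morphism uniqueness} forces the braided map to be a scalar multiple of the expected $d_i'$ in the relevant bigraded (J, $u$, $\hbar$, $C_1$) degree, since there is essentially only one red-black crossing of the correct multidegree between the relevant source and target. Over $\mathbb{Z}$ the remaining sign ambiguity is controlled by Corollary \ref{cor:automorphisms}, which can be applied by comparing against an explicit normalization such as the cupcap diagram appearing in its condition (2). I do not expect any conceptual difficulty beyond this careful grading and sign analysis; all the geometric content has already been done in the one-strand case, and the passage to $n$ strands is genuinely a formal consequence of the locality of $\tau(\pi)$.
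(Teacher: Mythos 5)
Your proposal follows essentially the same route as the paper: braid the defining cone of $\Lambda_n$ termwise, using Proposition \ref{braiding U} (with the extra $\theta_+$ strands riding along by locality) for the source terms, the identity on $\theta_+^n$, and Proposition \ref{braiding U map} for the differential. The only real difference is the last step: where you appeal to Lemma \ref{morphism uniqueness} and Corollary \ref{cor:automorphisms} to pin down the scalar, the paper simply observes that $d_i$ is the one-strand map of Proposition \ref{braiding U map} (unaffected by idle strands) composed with black-black crossings, and functoriality then gives $d_i \mapsto d_i'$ on the nose; your bookkeeping of the $\{-1\}$ shift is consistent with the paper's (implicit) convention.
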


\begin{proof}
$\mathbb{B}_{\tau(\pi)}$ sends $\upsilon_+ \times \theta_+^{n-1}$ to $\upsilon_- \times \theta_+^{n-1}$ by a similar calculation as the proof of Proposition \ref{braiding U} and acts as the identity on $\theta_+^n$. To see where it sends $d$, we use that $\vcenter{\hbox{\begin{tikzpicture}[scale=.5]
	\begin{pgfonlayer}{nodelayer}
		\node [style=none] (0) at (-0.25, 0.5) {};
		\node [style=none] (1) at (0.25, 0.5) {};
		\node [style=none] (2) at (-0.25, -0.25) {};
		\node [style=none] (3) at (0.25, -0.25) {};
		\node [style=none] (4) at (0, -0.25) {};
		\node [style=none] (5) at (0, 0.5) {};
	\end{pgfonlayer}
	\begin{pgfonlayer}{edgelayer}
		\draw [style=su2 puncture strand] (0.center) to (2.center);
		\draw [style=brane] (3.center) to (1.center);
		\draw [style=su2 puncture strand, in=90, out=-90, looseness=1.25] (5.center) to (4.center);
	\end{pgfonlayer}
\end{tikzpicture}}}:\upsilon_+ \to \theta_+\{-1\}$ is sent to $\vcenter{\hbox{\begin{tikzpicture}[scale=.5]
	\begin{pgfonlayer}{nodelayer}
		\node [style=none] (0) at (-0.25, 0.5) {};
		\node [style=none] (1) at (0.25, 0.5) {};
		\node [style=none] (2) at (-0.25, -0.25) {};
		\node [style=none] (3) at (0.25, -0.25) {};
		\node [style=none] (4) at (0.5, -0.25) {};
		\node [style=none] (5) at (0, 0.5) {};
	\end{pgfonlayer}
	\begin{pgfonlayer}{edgelayer}
		\draw [style=su2 puncture strand] (0.center) to (2.center);
		\draw [style=su2 puncture strand] (3.center) to (1.center);
		\draw [style=brane, in=90, out=-90, looseness=1.25] (5.center) to (4.center);
	\end{pgfonlayer}
\end{tikzpicture}}}:\upsilon_- \to \theta_+\{-1\}$ from Proposition \ref{braiding U map}. This result is unaffected by additional black strands to the right of both diagrams. We compose this result with any number of crossings of the black strands to find that $d_i$ is sent to $d_i'$.
\end{proof}

\section{Fukaya-Seidel categories of multiplicative Coulomb branches} \label{sec: ADLSZ}

Here we recall from \cite{ADLSZ} various results on how to construct objects and compute morphisms in $Fuk(\mathcal{M}^\times(\Gamma, \vec{d}), \mathcal{W}_{\mathbf{a}})$.  

\subsection{Drawing objects}

Recall from \cite{BFN} that if $\mathbf T$ is a maximal torus of the quiver gauge group, and $W$ is the corresponding Weyl group, then there is a 
map $\MCB(\Gamma, \vec d) \to \mathbf{T}/W$.  We denote its $W$-cover as $y: \widetilde \MCB(\Gamma, \vec d) \to \mathbf{T}$.

\begin{theorem} \label{trivialization} \cite[Thm. 1.2]{ADLSZ} 
Fix a maximal torus of the quiver gauge group, 
$\mathbf T \cong \prod_i \prod_{\alpha = 1}^{d_i} \C^*$.  For $\mathbf{t} \in \mathbf T$, we 
write its coordinates as
$t_{i, \alpha} \in \C^*$.  
Fix 
$\mathbf a \in \mathbf{T}_F = \prod_{i} \prod_{\alpha = 1}^{m_i} \C^*$
so that the coordinate entries 
$a_{i, \alpha} \in \C^*$ have distinct arguments. 
We write:  $\mathbf{T}_{O} \subset \mathbf{T}$ for
the complement of the following hyperplanes:
\begin{enumerate}
\itemsep0em
\item \label{intro root hyperplane}
The locus where some $t_{i, \alpha} = t_{i, \alpha'}$ 
\item The locus where some $t_{i, \alpha}$ coincides
with some $t_{j, \beta}$ for adjacent nodes $i, j$. 
\item The locus where some $t_{i, \alpha}$ coincides with some $a_{i, \beta}$.  
\end{enumerate}

We write $\widetilde \MCB(\Gamma, \vec d)_O := y^{-1}(T_O)$.  Then there is a $W$-equivariant isomorphism 
    $$(u, y):  \widetilde \MCB(\Gamma, \vec d)_O \to \mathbf T^\vee \times \mathbf T_{O}$$ 
    such that 
    $$\mathcal{W}_{\mathbf{a}} := \sum_{i, \alpha} u_{i,\alpha}$$
    extends to a $W$-invariant regular function on $\widetilde \MCB(\Gamma, \vec d)$. 
\end{theorem}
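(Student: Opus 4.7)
My plan is to use the BFN presentation of $\MCB(\Gamma, \vec d)$, explicitly abelianize it on the generic locus $\widetilde\MCB_O$, and then verify that the superpotential extends regularly across the complementary hyperplanes.

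First, I would invoke the standard abelianized presentation. On the $W$-cover $\widetilde\MCB$, the algebra of functions is generated by Cartan coordinates $t_{i,\alpha}^{\pm 1}$ together with ``abelianized'' monopole operators $u_{i,\alpha}^\pm$ indexed by $\pm$ the $\alpha$-th fundamental coweight of the $i$-th $GL(d_i)$ factor. These satisfy product relations of the form $u_{i,\alpha}^+ u_{i,\alpha}^- = P_{i,\alpha}(\mathbf t, \mathbf a)$, where $P_{i,\alpha}$ is a product of K-theoretic matter factors whose zero loci are precisely the hyperplanes of types (2) and (3) excluded in the definition of $\mathbf T_O$. On $\widetilde\MCB_O = y^{-1}(\mathbf T_O)$ the polynomial $P_{i,\alpha}$ is therefore a unit, so $u_{i,\alpha}^+$ is a unit. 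Setting $u_{i,\alpha} := u_{i,\alpha}^+$ (up to a unit renormalization absorbing the $\mathbf a$-dependence) gives free $\mathbf T^\vee$-coordinates, and the trivialization $(u,y): \widetilde\MCB_O \xrightarrow{\sim} \mathbf T^\vee \times \mathbf T_O$ follows. The $W$-equivariance is built in: $W$ permutes the indices $(i,\alpha)$ on both sides of the isomorphism in the same way.

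The substantive content is then to check that $\mathcal W_{\mathbf a} = \sum_{i,\alpha} u_{i,\alpha}$ extends to a regular $W$-invariant function on all of $\widetilde\MCB$. Across the hyperplanes of types (2) and (3) each individual $u_{i,\alpha}$ already extends, provided the normalization is chosen so that the matter factors in $P_{i,\alpha}$ absorb the apparent singularities. Across the root hyperplanes $t_{i,\alpha} = t_{i,\alpha'}$ of type (1), however, the individual $u_{i,\alpha}$ does \emph{not} extend: the abelianization procedure produces poles proportional to the multiplicative analogue of $(t_{i,\alpha}-t_{i,\alpha'})^{-1}$. What rescues the sum is that $u_{i,\alpha} + u_{i,\alpha'}$ is symmetric in the pair $(\alpha,\alpha')$, so the polar parts appear with opposite residues and cancel, leaving a regular function. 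Summing over all $(i,\alpha)$ yields a regular function on $\widetilde\MCB$, which is manifestly $W$-invariant.

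The main obstacle I expect is this last residue cancellation: one must carefully track signs through the multiplicative (K-theoretic) rather than additive version of BFN abelianization, verify that the normalization of $u_{i,\alpha}$ is simultaneously compatible with all three hyperplane types, and confirm regularity not just in codimension one but in higher codimension as well. This is the K-theoretic analogue of the classical Weyl-symmetric extension argument in the additive Coulomb branch, and is the technical heart of the proof; once it is in hand, the isomorphism and the extension statement both follow directly.
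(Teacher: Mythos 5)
First, a point about scope: this statement is not proved in the paper at all --- it is recalled verbatim from \cite[Thm.\ 1.2]{ADLSZ} (the map $u$ being defined there via BFN monopole operators), so there is no internal proof to compare your argument against; any assessment has to be of your sketch on its own terms, measured against the strategy of the cited work.

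With that caveat, your outline follows the expected route (abelianize the BFN presentation over the locus where the roots and matter factors are invertible, read off the trivialization, then extend the superpotential), but the two places you label as ``the technical heart'' are exactly the places where the sketch is an assertion rather than an argument. (i) The choice of normalization of $u_{i,\alpha}$ is not an afterthought: the statement requires a \emph{single} unit renormalization that simultaneously makes $u^+_{i,\alpha}$ a global coordinate over all of $\mathbf T_O$ (types (2) and (3) at once), makes the assignment $W$-equivariant (the abelianized monopole operators have non-symmetric root denominators, so equivariance is not automatic ``because $W$ permutes indices''), and makes the sum $\sum_{i,\alpha} u_{i,\alpha}$ agree with an honest element of the Coulomb branch algebra. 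In \cite{ADLSZ} the regular extension is most naturally seen precisely this way --- $\sum_\alpha u_{i,\alpha}$, in the chosen normalization, is the abelianized expression of a (dressed) fundamental monopole operator, hence is regular on $\widetilde\MCB$ by construction --- rather than by an ad hoc residue cancellation along each root hyperplane, which is what you propose and do not carry out. (ii) Your worry about checking regularity in codimension $\geq 2$ is not a real obstacle: the Coulomb branch (and its $W$-cover over the relevant locus) is a normal variety, so regularity away from codimension two plus regularity generically along each excluded hyperplane suffices. So the plan is pointed in the right direction, but as written it defers the identification of the correct $u$-coordinates and the verification that the symmetrized sum lies in the BFN algebra --- which is the actual content of the cited theorem --- to an unproven cancellation claim.
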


The map $u$  is given in \cite{ADLSZ} in terms of the monopole operators of \cite{BFN}.  The function $\mathcal{W}_{\mathbf{a}}$ was also shown to agree in local coordinates with the explicit proposal of \cite{aganagic-knot-1}. 
(In the present article, we will focus on the case $\Gamma = \bullet$, for which the hyperplane of type (2) in the theorem does not appear.)

Theorem \ref{trivialization} allows one to describe certain Lagrangians by diagrams on the page.  

\begin{definition}
    A $\vec d$ multi-curve in $\C^*$ is a collection of curves labeled by the nodes of the quiver, with $d_i$ curves labeled $i$. 

    We say the collection is admissible if curves are embedded, conic at infinity under the identification $\C^* = T^* S^1$, and no $i$ curve intersects a $j$ curve unless $i, j$ are distinct and not adjacent in the quiver.  If in addition
    every $i$ curve avoids all points $a_{i, \beta}$, we say the collection of curves is $\mathbf{a}$-admissible. 
\end{definition}

\begin{definition}
Given an admissible $\vec{d}$-multicurve $\gamma$, we will form a Lagrangian $L_\gamma \subset \MCB(\Gamma, \vec{d})$ as follows.  Temporarily number the $i$-curves from $1$ to $d_i$; we denote a given one as $\gamma_{i, \alpha}$ with $\alpha \in \{1, \ldots, d_i\}$.  We write $\widetilde \gamma$ for this ordered multicurve.  Then we may form a Lagrangian $\widetilde l_{\widetilde \gamma} = \prod \gamma_{i, \alpha} \subset \mathbf T_{O}$, and 
$$\widetilde L_{\widetilde \gamma} = (i \R_{>0})^{|\vec d|} \times l_{\widetilde \gamma} \subset \mathbf{T}^\vee \times \mathbf{T}_O.$$
Each $W$ orbit meets $\widetilde{L}_{\widetilde \gamma}$ and $\widetilde l_{\widetilde \gamma}$ in at most one point.  We write $L_\gamma$ and $l_\gamma$ for their images in the $W$-quotient; note these depend only on the original multicurve and not on the temporary numbering. 
Theorem \ref{five authors theorem} implies there is an embedding 
$(\mathbf{T}^\vee \times \mathbf{T}_O)/W \hookrightarrow \MCB(\Gamma, \vec{d})$, and we preserve the notation $L_\gamma$ for the image under the embedding.  
\end{definition}
Note that Theorem \ref{five authors theorem} also implies that $L_\gamma$ is is conic at infinity, and that if $\gamma$ is in fact $\mathbf{a}$-admissible, then $L_\gamma$ stays away from the stop associated to the superpotential $\mathcal{W}_{\mathbf{a}}$. 

The space $\MCB(\Gamma, \vec{d})$ is always affine.  If it is in addition smooth (this is known to hold when $\Gamma$ is of ADE type), we may consider the Fukaya-Seidel category  $Fuk(\MCB(\Gamma, \vec{d}), \mathcal{W}_{\mathbf{a}})$. 
Suppose in addition $\gamma$ contains no closed curves.  Then above discussion implies that $L_\gamma$ defines an object in this category, canonical up to shift.     
Often in the text, we will simply draw a multicurve $\gamma$ to name the associated Lagrangian $L_\gamma$. 

\begin{definition}
    We write $Fuk_{| | |}(\MCB(\Gamma, \vec{d}), \mathcal{W}_{\mathbf{a}}) \subset Fuk(\MCB(\Gamma, \vec{d}), \mathcal{W}_{\mathbf{a}})$ for the full subcategory generated by objects associated to multicurves without closed components. 
\end{definition}

\vspace{2mm}

Suppose now that the entries of $\mathbf{a}$ have distinct arguments; we record these by placing a `red' point 
labelled $i$ on the unit circle at each $\mathrm{arg}(a_{i, \alpha})$.  Now fix an additional collection of $\mathbf{\theta}$ of disjoint `black' points on the circle, disjoint from the red points, and with $d_i$ points labelled $i$.  To such a collection, we associate the multicurve $\gamma(\mathbf{\theta})$ given by taking the preimage of the black points under the map $\mathrm{arg}: \C^* \to S^1$.  We will write $$T_{\mathbf{\theta}} := L_{\gamma(\theta)}.$$

In our diagrams, we will often draw the `base' cylinder $\mathbb{C}^*_y = T^*S^1$ as a rectangle in the page, understanding the horizontal boundaries to be identified, and with the circle near $\infty$ given by the top of the rectangle, and the circle near zero at the bottom.  
In our conventions, wrapping is to the right at the bottom of the rectangle, and to the left at the top.  See Figure  \ref{fig:example objects} for examples of objects specified as multicurves. 

By contrast, we draw the `fiber' $\mathbb{C}^*_u$ cylinder as a cylinder. 

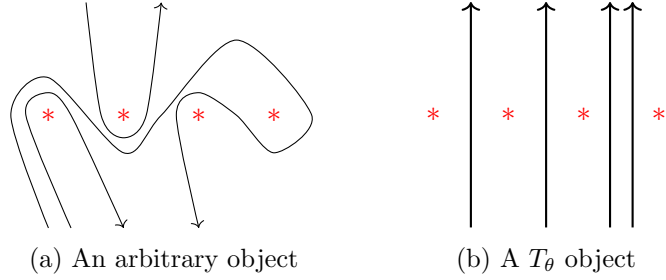
\begin{figure}
    \centering
    \begin{subfigure}[b]{0.3\textwidth}
    \centering
    \begin{tikzpicture}
        \node[red] at (0,0) {$*$};
        \node[red] at (1,0) {$*$};
        \node[red] at (2,0) {$*$};
        \node[red] at (3,0) {$*$};
        \draw[->] plot [smooth, tension=0.5] coordinates {(0,-1.5) (-.5,0) (0,.5) (1,-.5) (1.5,0) (2.5,1) (3.5,0) (3,-.5) (2.5,0) (2,.3) (1.7,0) (2,-1.5)};
        \draw[->] plot [smooth, tension=0.5] coordinates {(.3,-1.5) (-.3,0) (0,.3) (.3,0) (1,-1.5)};
        \draw[->] plot [smooth, tension=0.5] coordinates {(.5,1.5) (.7,0) (1,-.3) (1.3,0) (1.5,1.5)};
    \end{tikzpicture}
    \caption{An arbitrary object}
    \label{fig:example L}
    \end{subfigure}
    \begin{subfigure}[b]{0.3\textwidth}
    \centering
    \begin{tikzpicture}
        \node[red] at (0,0) {$*$};
        \node[red] at (1,0) {$*$};
        \node[red] at (2,0) {$*$};
        \node[red] at (3,0) {$*$};
        \draw[thick,->] (0.5,-1.5) -- (0.5,1.5);
        \draw[thick,->] (1.5,-1.5) -- (1.5,1.5);
        \draw[thick,->] (2.35,-1.5) -- (2.35,1.5);
        \draw[thick,->] (2.65,-1.5) -- (2.65,1.5);
    \end{tikzpicture} 
    \caption{A $T_\theta$ object}
    \label{fig:example Ttheta}
    \end{subfigure}
    \caption{Example of objects in $Fuk_{| | |}(\MCB(\Gamma, \vec{d}), \mathcal{W}(a))$}
    \label{fig:example objects}
\end{figure}

\subsection{Gradings} \label{anchor grading}

First let us recall how, in general, gradings on Lagrangian Floer homology can arise from lifts to a cover; see e.g. \cite[Sec. 3.2]{Sheridan} for a detailed account.  
For a symplectic manifold $X$, a subgroup $G \subset H^1(X, \Z)$,  Lagrangians $L, M \subset X$, and their fixed choice of lifts $\widetilde{L}, \widetilde{M}$ to the $G$-cover of $X$ (called `anchorings' in the literature), we have $$L \cap M = \bigsqcup_{g \in G } \widetilde{L} \cap g \cdot \widetilde{M}$$
Because disks are simply connected hence lift to the cover, the induced grading on Floer homologies is compatible with all structures. 
So long as $L, M$ admit lifts, the grading on $\Hom(L, L)$, $\Hom(M, M)$ and the relative grading on $\Hom(L, M)$ do not depend on the choice of lifts.  For most of this article, we will only use simply connected Lagrangians, which thus always admit lifts. 

When considering Lagrangians $L, M \subset X \setminus D$ for some divisor $D$, one can still grade the Lagrangian Floer homology {\em in $X$} by $G \subset H^1(X \setminus D, \Z)$ in the same manner, so long as one works over the group ring of $G' = \mathrm{ker} (H_1(X \setminus D, \Z) \to H_1(X, \Z))$, and counts disks in $X$ by the class of their boundary in this ring.  Said differently, since this kernel is the image of $H_2(X, X \setminus D) \cong H^{n-2}(D)$ in the long exact sequence, one should view the Novikov variable counting intersections with $D$ as graded.

Returning to our setting, in \cite{ADLSZ} the space $\MCB(\Gamma, \vec{d}), \mathcal{W}_{\mathbf{a}}$
had an integral $H^1$ class was there used to define a grading, there called ``q-grading'' -- here we will call it the J-grading.  

In \cite{ADLSZ}, the image in  $\MCB(\Gamma, \vec{d})$ of the locus from (1) of Theorem \ref{trivialization} is termed the ``root divisor'' and the image of the loci (2), (3) is termed ``matter divisor''.  These divisors are avoided by all multicurve Lagrangians, and wrappings may be performed in their complement.  The category $Fuk_{| | |}(\MCB(\Gamma, \vec{d}), \mathcal{W}_{\mathbf{a}})$ was in \cite{ADLSZ} defined over the ring $\Z[\hbar,u]$ (there $u$ was called $\eta$), where $\hbar$ and $u$  count intersections with the root and matter divisors respectively.  

We will correspondingly refer to the gradings induced on morphism spaces 
by the loops around these divisors as the $\hbar$ and $u$ gradings, respectively. 
We will use the existence of these gradings, but will only ever need to explicitly compute the $J$ grading. We will give an algorithm for computing the $J$-grading at the end of this section. 

\subsection{Drawing morphisms}

The main result of \cite{ADLSZ} is: 

\begin{theorem} \cite[Thm. 1.7]{ADLSZ} \label{five authors theorem}
    For $\Gamma$ of ADE type, and $\mathbf{a}$ consisting of points with distinct arguments, there is an embedding
    \begin{eqnarray*}
        \mathcal{C}_{\Gamma, \vec{d}, \arg(\mathbf{a})} & \hookrightarrow & Fuk_{| | |}(\MCB(\Gamma, \vec{d}), \mathcal{W}_{\mathbf{a}}) \\ 
        \theta & \mapsto & T_\theta
    \end{eqnarray*} 
    This embedding is linear over $\Z[u, \hbar]$ and there are choices of anchorings of the $T_\theta$ such that the map respects $u, \hbar, J$ gradings. 
\end{theorem}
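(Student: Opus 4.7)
The plan is to reduce everything to the explicit coordinates provided by Theorem \ref{trivialization}. Under the trivialization $(u,y) : \widetilde{\MCB}(\Gamma,\vec d)_O \xrightarrow{\sim} \mathbf T^\vee \times \mathbf T_O$ the superpotential becomes $\mathcal W_{\mathbf a} = \sum_{i,\alpha} u_{i,\alpha}$, which is linear in the $u$-direction, and the Lagrangians $\widetilde L_{\widetilde\gamma}$ split as products $(i\R_{>0})^{|\vec d|}\times \widetilde l_{\widetilde\gamma}$. Hence, for objects $T_\theta$ coming from vertical multicurves in $\mathbf T_O$, any wrapping Hamiltonian can be chosen to respect the product structure: wrap only in the $y$-direction to avoid the stop of $\mathcal W_{\mathbf a}$, and treat the $u$-factor as a trivial cotangent fiber. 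The $W$-quotient permutes labels of coincident color but does nothing to the underlying multicurves, so the computation of Hom spaces between $T_\theta$'s reduces to a wrapped Floer computation in $\mathbf T_O$, enriched by Morse-theoretic data on the $u$-factor and tracked by the intersection-with-divisor Novikov variables $u,\hbar$ from Section \ref{anchor grading}.

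Next I would construct the functor on morphisms by matching KLRW generators to explicit Reeb chords on the boundary-at-infinity of $\mathbf T_O$. A black strand going straight from $\theta$ to itself corresponds to the identity intersection. A crossing of two equally-colored black strands at adjacent positions corresponds to the unique short Reeb chord sliding one curve past the other in $\mathbf T_O$; this chord sits in the locus corresponding to hyperplane (1) of Theorem \ref{trivialization}, hence picks up $\hbar$. A red–black crossing corresponds to a Reeb chord sliding a black curve past a red stop, which intersects the matter divisor (hyperplane (3)), hence picks up $u$. A dot on a black strand is the degree-one class $du_{i,\alpha}/u_{i,\alpha}$ on the $\C^*$ factor of $\mathbf T^\vee$; equivalently, it is the Reeb chord shifting the anchoring of a single black strand by the generator of $\pi_1$ of the corresponding $\C^*$. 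The effect on gradings is dictated by Section \ref{anchor grading}: each generator acquires its degree from the ordinary cohomology class $J$ of its trajectory, matched to the rescaling grading of Section \ref{KLRW grading}.

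The main body of the proof consists of verifying the KLRW relations in Figures \ref{fig:KLRW} and \ref{fig:KLRW-braiding} as Floer-theoretic identities. The key observation that makes this tractable is the product structure: holomorphic disks in $\mathbf T^\vee \times \mathbf T_O$ with boundary on product Lagrangians split as pairs of disks in each factor. In the $u$-factor, the Lagrangians are positive real rays and the potential is linear, so disk moduli are governed by toric/Morse data and produce simple combinatorial contributions (powers of $u$ and $\hbar$). In the $y$-factor, the multicurves are generically in position and all non-trivial relations reduce to local computations near a single crossing of two colored curves or near a red stop; each local model is a standard Lefschetz-type picture whose Floer algebra has been computed (e.g.\ bigon and triangle contributions reproduce Figures \ref{fig:KLRW-bigon}–\ref{fig:KLRW-skein-2}). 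The braid relations for the bimodules $\mathscr B_\beta$ in Figure \ref{fig:KLRW-braiding} come for free from isotopy of the red stops on the boundary circle, since moving stops around is continuous monodromy in the space of $\mathbf a$.

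Full faithfulness and the grading claim can be packaged together. On the KLRW side, Webster gives an explicit basis for $\Hom_{\mathcal C_{\Gamma,\vec d, F}}(\theta,\theta')$ as dotted strand diagrams in a standard form; on the Fukaya side, a suitable choice of wrapping Hamiltonian produces an explicit chain-level generating set of Reeb chords in bijection with these diagrams. The linearity over $\Z[u,\hbar]$ and tri-grading compatibility are then a bookkeeping check, once anchorings for each $T_\theta$ are fixed (e.g.\ by choosing a reference ordering of black strands and the zero section on each $i\R_{>0}$). The main obstacle in this program is the disk-counting step in the middle paragraph: although the product structure localizes the computation, one still needs to rule out \emph{exotic} disks that use the $u$-factor non-trivially, and to produce enough explicit disks to realize every KLRW relation on the nose rather than up to homotopy. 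This is where the integrality of the identification, and the precise scaling of $u$ and $\hbar$, is won or lost.
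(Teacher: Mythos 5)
This statement is not proved in the present paper at all: it is imported verbatim from \cite[Thm.\ 1.7]{ADLSZ}, with only a remark added that the $u,\hbar$-grading claim is implicit in the proof given there. So there is no internal argument to compare your sketch against; I can only assess the sketch on its own terms, and as a reconstruction of the external proof it has a genuine gap at its center.

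The gap is the claim that holomorphic disks ``split as pairs of disks in each factor'' of $\mathbf T^\vee\times\mathbf T_O$, with the $u$-factor contributing only Morse/toric data. The Floer theory is computed in $\MCB(\Gamma,\vec d)$, not in the product: the trivialization of Theorem \ref{trivialization} holds only away from the root and matter divisors, and the disks that realize exactly the relations carrying $u$ and $\hbar$ (e.g.\ Figures \ref{fig:KLRW-bigon-red}, \ref{fig:KLRW-jij}, \ref{fig:KLRW-braid-red}) are precisely those meeting these divisors, where the product description breaks down. This is why \cite{ADLSZ} introduce the cylindrical model recalled in Theorem \ref{sl2 cylindrical model}: the fiber component $\Phi_u$ is coupled to the base component, its zeros forced to lie over $\Phi_y^{-1}(\mathbf a)$ and its poles over the branch points of $\Phi_C$, so disk counts do not factor and the powers of $u$ and $\hbar$ are read off from these coupled zeros and poles rather than from a separate computation in the $u$-factor. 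Ruling out the ``exotic'' disks and producing, over $\Z$, exactly the disks realizing each KLRW relation is the substance of the theorem, which your plan defers to a final paragraph as an acknowledged obstacle rather than resolving. Two smaller points: verifying the braiding relations of Figure \ref{fig:KLRW-braiding} is not part of this statement (the geometric braiding is a separate matter, treated in Section \ref{sec: braiding} of this paper), and wrapping ``only in the $y$-direction'' is not an option consistent with the definition of the category with superpotential stop --- the fiberwise wrapping near $u=\infty$ is what generates the dot morphisms (compare the $x^i$ generators in Figure \ref{fig:map:stabxT2->stabxT1}), so treating the $u$-factor as inert discards exactly the structure the $u$- and $J$-gradings are meant to record.
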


\begin{remark}
    The discussion of $u$ and $\hbar$ gradings was not explicit in the currently available version of \cite{ADLSZ}, but the proof given there establishes the result as stated above.
\end{remark}

Consider some $\theta$ and suppose $d \in S^1$ is the location of a black dot.  Additionally suppose given $t \in \R$ such that $d+t$ is disjoint from all dots in $\theta \setminus d$.  Then we write $\theta(d \rightsquigarrow d+t)$ for the object of the KLRW category in which the black dot at $d$ is replaced by a black dot at $d+t$, and  
$$ 
[d \to d+t]: \theta \mapsto \theta(d \rightsquigarrow d+t) 
$$ 
for the morphism in the KLRW category which winds the black dot at $d$ forward by $t$.  (If $t$ is negative, then backward by $-t$.) 

In \cite{ADLSZ}, we described only the images of enough morphisms to generate the morphism spaces under composition.  Let us note a few more: 

\begin{lemma}
    The functor of Theorem \ref{five authors theorem} carries $[d \to d+t]$ to the morphism given as the identity on all components of the multicurve other than $d$ and $d+t$, and the length $|t|$ positive Reeb chord from $d$ to $d+t$ near $\infty$ for $t > 0$ and near $0$ for $t < 0$.  
\end{lemma}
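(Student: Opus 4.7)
The strategy is to factor the morphism $[d \to d+t]$ in the KLRW category as a composition of elementary moves, each of which has already been handled in \cite{ADLSZ}, and then observe that the corresponding Reeb chords concatenate on the Fukaya side. Concretely, pick a finite partition $d = s_0, s_1, \ldots, s_N = d+t$ along the directed path from $d$ to $d+t$, refined so that each subinterval $(s_k, s_{k+1})$ (resp.\ $(s_{k+1}, s_k)$ for $t<0$) meets at most one other dot of $\theta \setminus \{d\}$, red or black. Then in the KLRW category we have
\begin{equation*}
    [d \to d+t] \;=\; [s_{N-1} \to s_N] \circ \cdots \circ [s_1 \to s_2] \circ [s_0 \to s_1],
\end{equation*}
as one sees directly by stacking strand diagrams: consecutive ``small'' windings concatenate into the long one, with no KLRW relations needing to be applied (since there are no spurious interactions).

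Each factor $[s_k \to s_{k+1}]$ is one of three elementary types: (i)~the dot crosses no other strand, in which case $[s_k \to s_{k+1}]$ is the canonical isotopy isomorphism from the remark following the definition of $\mathcal{C}_{\Gamma, \vec d, F}$; (ii)~the dot crosses a single red strand, giving a single red-black crossing diagram; or (iii)~the dot crosses a single black strand, giving a single black-black crossing. All three cases are among the generating morphisms whose Fukaya-side image is explicitly determined in \cite{ADLSZ}; in each case the image is supported on the two relevant components of the multicurve and consists of a short Reeb chord near $\infty$ (for forward motion) or near $0$ (for backward motion), of length equal to the small increment $|s_{k+1} - s_k|$.

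It remains to argue that the composition on the Fukaya side of these short Reeb chord morphisms is the single length-$|t|$ Reeb chord asserted in the statement. This is where one uses the cylindrical model of \cite{ADLSZ}: near each end of the base cylinder the contact boundary is a circle cross a fiber, all the Lagrangians $T_\theta$ are products with the real positive ray in the fiber, and the Reeb flow at infinity is simply rotation in the $S^1$ direction with the right orientation convention (wrapping left at the top, right at the bottom). Consequently the product of two consecutive positive Reeb chord generators at a given end equals the concatenated Reeb chord with no additional holomorphic disk contributions, up to an overall sign determined by degree. Iterating gives the desired length-$|t|$ chord.

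\textbf{Main obstacle.} The only genuine subtlety is sign bookkeeping in the composition: a priori each concatenation could produce a $\pm 1$, and these could accumulate. Two strategies make this painless. Either one fixes anchorings once and for all, as in Theorem~\ref{five authors theorem}, and computes that the Maslov/grading shift from composition is the one recorded above; or one uses the uniqueness afforded by the auxiliary gradings (Lemma~\ref{morphism uniqueness} and Corollary~\ref{cor:automorphisms}) to fix the image of $[d \to d+t]$ up to sign, and then pins down the sign by checking a single elementary case (a length-$|t|$ move across zero or one strand), which is already contained in the generator-level description of \cite{ADLSZ}.
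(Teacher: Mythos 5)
Your proposal is correct and takes essentially the same route as the paper: the paper likewise reduces to windings that cross zero or one strand, which are exactly the generators whose Fukaya-side images are fixed by definition in \cite[Sec. 8.4]{ADLSZ}, and then checks that the compositions agree. The only difference is that where you appeal to a Reeb-flow-at-infinity heuristic (plus a sign discussion), the paper carries out the concatenation check concretely, by exhibiting the single holomorphic triangle in the cylindrical model that computes each composition of one-crossing morphisms (Figure \ref{fig:example disk}) -- which is the precise form of the verification your last step needs.
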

\begin{proof}
    When the path $d \rightsquigarrow d+t$ crosses zero or one red or black strands, this is true by definition (see \cite[Sec. 8.4]{ADLSZ}).  More generally, observe that the composition of images of the one-crossing morphisms on the Fukaya side involves the count of a single disk (see Figure \ref{fig:example disk}), matching the KLRW composition.  
\end{proof}

\begin{figure}
    \centering
    \begin{tikzpicture}
        \node[red] at (0,1) {$*$};
        \draw[thick,->,teal] (0.5,-2) -- (0.5,3);
        \draw[thick,->,teal] (-1,-2) -- (-1,3);
        \draw[thick,->] (1.25,-2) -- (-2.25,3);
        \draw[thick,->] (.75,-2) -- (-2.75,3);
        \draw[thick,->,blue] (2.75,-2) -- (-4,3);
        \draw[thick,->,blue] (3.5,-2) -- (-3,3);
        \node[label={[right]$p_1$}]  (p1b) at (.5,-.95) [circle,fill,inner sep=1.5pt]{};
        \node[label={[left]$p_1$}]  (p1t) at (-1,.5) [circle,fill,inner sep=1.5pt]{};
        \node[label={[left]$p_2$}] (p2t) at (-2.45,2.55)  [circle,fill,inner sep=1.5pt]{};
        \node[label={[right]$p_2$}]  (p2b) at (-.35,.3) [circle,fill,inner sep=1.5pt]{};
        \node[label={[right]$p_3$}]  (p3t) at (-1,1.45) [circle,fill,inner sep=1.5pt]{};
        \node[label={[right]$p_3$}]  (p3b) at (.5,-.35) [circle,fill,inner sep=1.5pt]{};
        \begin{scope}[on background layer] 
        \fill [red!20] (p1t.center) -- (p2t.center) -- (p3t.center) -- cycle;
        \fill [red!20] (p1b.center) -- (p2b.center) -- (p3b.center) -- cycle;
        \end{scope}
    \end{tikzpicture} 
    \caption{The disk count yielding $p_1 \cdot p_2 = p_3$ where $p_1 = \vcenter{\hbox{\includegraphics{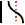}}}$, $p_2 = \vcenter{\hbox{\includegraphics{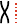}}}$, and $p_3 = \vcenter{\hbox{\includegraphics{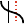}}}$}
    \label{fig:example disk}
\end{figure}

We illustrate the assertion of the Lemma in Figure \ref{fig:klrw_diagram_from_geometry}.

\begin{figure}
    \centering
    \begin{tikzpicture}[yscale=0.75]
        \node[red] at (0,0) {$*$};
        \node[red] at (3,0) {$*$};
        \draw[thick,->] (0.5,-2) -- (0.5,2);
        \draw[thick,->,teal] (0.75,-2) -- (0.75,2);
        \draw[thick,->] (1.5,-2) -- (1.5,2);
        \draw[thick,->] (2.25,-2) -- (2.25,2);
        \draw[thick,->,teal] (2.5,-2) -- (2.5,2);
        \draw[thick,->,teal] (3.5,-2) -- (3.5,2);
        \draw[dashed, ->] (0.5,0) -- (0.75,0);
        \draw[dashed, ->] (1.5,-1.75) -- (3.5,-1.75);
        \draw[dashed, ->] (2.25,0) -- (2.5,0);
    \end{tikzpicture}
    \caption{The image of the morphism $\vcenter{\hbox{\includegraphics{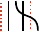}}}$ under the embedding of Theorem \ref{five authors theorem}}
    \label{fig:klrw_diagram_from_geometry}
\end{figure}

\subsection{Counting disks; cylindrical model}

To count holomorphic disks in the $\MCB(\Gamma, \vec{d})$, \cite{ADLSZ} developed a `cylindrical model' building on Lipschitz's approach to Heegard-Floer theory \cite{lipshitz2006cylindrical}.  Indeed, recall first that for any target curve $X$ and domain curve $C$, there is a bijection (a) between maps $C \to Sym^d X$ transverse to the diagonal and (b) maps $S \to C \times X$, where $S \to C$ is a $d:1$ cover with simple ramification.

Consider now a map $\Phi: C \to \MCB(\Gamma, \vec{d})$.  Per the trivialization recalled in Theorem \ref{trivialization} (note the trivialization depends on the choice of $\mathbf{a}$), we obtain, in at least in the complement of the relevant divisors, maps $C \supset C_O \to \mathbf{T}^\vee / W$ and $C \supset C_O \to \mathbf{T}_{O} / W$, i.e., to certain products of symmetric products. So long as the original map did not land entirely in the removed divisors, these two maps determine the original map $C \to \MCB(\Gamma, \vec{d})$.  
Since there is a global map $\MCB(\Gamma, \vec{d}) \to \mathbf{T}/ W$, the map $C_O \to \mathbf{T}_{O} / W$ extends uniquely to $C \to \mathbf{T} / W$, and we may characterize  it (assuming transversality to the diagonal) via some map 
$$\Phi_C \times \Phi_y: S \to C \times \mathbb{C}^*_y$$ 
as above.  
The other map $C_O \to \mathbf{T}^\vee / W$ is then characterized by some map $\Phi_u : S_O \to \mathbb{C}_u^*$ (same $S$ because $W$ was acting simultaneously on both factors of $\mathbf{T}^\vee \times \mathbf{T}_O$), which we may uniquely extend to a map $\Phi_u: S \to \mathbb{P}^1$.    

In summary: maps $C \to \MCB(\Gamma, \vec{d})$ transverse to certain divisors can be identified with a certain subset of maps 
$$\Phi_C \times \Phi_y \times \Phi_u: S \to C \times \mathbb{C}^*_y \times \mathbb{C}^*_u$$
A fundamental result of \cite{ADLSZ}
was a characterization of the image.  Here we recall the result in the special case $\Gamma = \bullet$  (which is somewhat simpler to state, and anyway the case of relevance here). 

\begin{theorem} \label{sl2 cylindrical model} \cite[Thm. 1.5]{ADLSZ}
    Suppose $\Phi_C: S \to C$ is a $d:1$ branched cover with simple branching.  Then a map 
    $\Phi_C \times \Phi_y \times \Phi_u: S \to C \times \mathbb{C}^*_y \times \mathbb{P}^1_u$ arises as desrcibed above from a (unique) map $C \to \MCB(\bullet, d)$ iff all zeros and poles of $\Phi_u$ are simple, the zeros occur exactly over $\Phi_y^{-1}(\mathbf{a})$, and the poles appear exactly over the branch points of $\Phi_C$. 
\end{theorem}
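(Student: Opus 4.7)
\medskip

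\noindent\textbf{Proof plan.} The strategy is to leverage the trivialization of Theorem \ref{trivialization}, which gives a $W$-equivariant isomorphism $\widetilde{\MCB}(\bullet, d)_O \cong \mathbf{T}^\vee \times \mathbf{T}_O$ away from the three hyperplane loci, and then analyze the boundary behavior required to extend across each of these loci. Since $\MCB(\bullet, d)$ is affine and the open locus $\widetilde{\MCB}(\bullet, d)_O / W$ is dense, any map $C \to \MCB(\bullet, d)$ is determined by its restriction to $\Phi^{-1}$ of this open locus, giving uniqueness immediately. Existence on the open locus is also tautological: a $W$-equivariant map $\tilde{C} \to \mathbf{T}^\vee \times \mathbf{T}_O$ from the $W$-pullback cover, together with the classical bijection of the excerpt between maps to $\Sym^d X$ and maps from a $d:1$ branched cover to $X$, repackages exactly as a map $\Phi_u \times \Phi_y: S_O \to \C^*_u \times \C^*_y$ (where $S_O$ is the preimage of the open part of $C$). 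So the real content is extension across the two remaining divisor types for $\Gamma = \bullet$, namely the \emph{matter divisor} (where some $t_{\alpha}$ coincides with some $a_\beta$) and the \emph{root divisor} (where some $t_{\alpha} = t_{\alpha'}$).

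\medskip

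\noindent For the matter divisor, the key input is that $u$ extends as a regular function on all of $\widetilde{\MCB}(\bullet, d)$ (Theorem \ref{trivialization}), and that on the matter locus the relevant monopole operator factors through $y - a$. Thus at any $s \in S$ with $\Phi_y(s) \in \mathbf{a}$, we need $\Phi_u(s) = 0$; conversely, given a map with all $\Phi_u$-zeros lying over $\Phi_y^{-1}(\mathbf{a})$ and simple, one checks (locally in a disk around $s$) that the pair $(\Phi_u, \Phi_y)$ lifts to the non-trivialized coordinates on $\widetilde{\MCB}(\bullet, d)$ using the defining relations of the Coulomb branch. The simplicity of the zero matches the fact that the matter divisor is cut out by a single equation, so higher-order vanishing is neither forced nor allowed.

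\medskip

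\noindent For the root divisor the analysis is more delicate. On the $W$-cover, the root hyperplane $t_\alpha = t_{\alpha'}$ is where the transposition $(\alpha\,\alpha')$ has fixed points; descending to $\MCB(\bullet,d)$ requires $W$-invariance, and the swap acts on the $\mathbf{T}^\vee$ factor by exchanging $u_\alpha \leftrightarrow u_{\alpha'}$. The image in the quotient sits over a simple branch point of $S \to C$ (this is why the simple branching hypothesis appears), and one computes that in the local analytic coordinates, the invariants $(t_\alpha - t_{\alpha'})(u_\alpha - u_{\alpha'})$ and $u_\alpha + u_{\alpha'}$ extend regularly provided $\Phi_u$ acquires a simple pole at the branch point, with residues matched by the branching direction. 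Conversely, given this pole data, the reconstructed map descends through $W$ and extends across the branch locus.

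\medskip

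\noindent I expect the hardest step to be the \emph{sufficiency} direction at the root divisor: one must show that the prescribed simple-pole behavior of $\Phi_u$, combined with the simple branching of $\Phi_C$, produces a genuinely regular map into $\MCB(\bullet,d)$ (not merely into the affinization). Concretely this amounts to verifying that all BFN monopole operators extend regularly in terms of the coordinates $(\Phi_C, \Phi_y, \Phi_u)$, which in turn requires a local model computation at the root locus (comparable to the standard local model for the Springer/Grothendieck resolution of $\mathfrak{sl}_2$). Once this local compatibility is established, gluing gives a global map $C \to \MCB(\bullet, d)$ whose restriction to $C_O$ recovers the given data, completing both directions.
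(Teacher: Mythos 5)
This statement is not proven in the paper at all: Theorem \ref{sl2 cylindrical model} is recalled verbatim from \cite[Thm.\ 1.5]{ADLSZ} and used as a black box for the disk counts later on, so there is no internal proof to compare your argument against. Any assessment therefore has to be of your sketch on its own terms, measured against the content of the cited result.

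On those terms, your outline has the right architecture (uniqueness and existence over the open locus via the trivialization of Theorem \ref{trivialization}, then extension analysis at the matter and root divisors), but it is a plan rather than a proof, and the plan places all of the actual content in steps you assert rather than carry out. The phrases ``one checks (locally in a disk around $s$) that the pair $(\Phi_u,\Phi_y)$ lifts'' and ``one computes that \ldots the invariants \ldots extend regularly'' are exactly the local model computations with the BFN monopole operators that constitute the theorem; without them nothing is established, including the necessity directions (why a zero of $\Phi_u$ \emph{cannot} occur away from $\Phi_y^{-1}(\mathbf{a})$, why it cannot be of higher order, and why $\Phi_u$ \emph{must} blow up at a branch point rather than merely being allowed to). Two further points need care. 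First, you introduce a condition of ``residues matched by the branching direction'' at the root divisor; no such condition appears in the statement, so you would have to show it is automatic for the data described (otherwise your reconstruction would prove a weaker ``iff'' than claimed). Second, your uniqueness argument via density of the open locus silently assumes the map $C\to\MCB(\bullet,d)$ does not land entirely in the removed divisors; the paper's own recollection of the correspondence makes this hypothesis explicit, and it has to be imposed or argued, not elided. As it stands, your proposal is a reasonable reading guide to where the difficulties lie, but it does not substitute for the proof in \cite{ADLSZ}.
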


\subsection{Computing the J-grading}

We will need to compute explicitly the relative J-grading between pairs of morphisms below; here we describe an explicit algorithm.  We restrict attention to the case $\Gamma = \bullet$. Given a pair of intersection points $p,q \in Hom(L_0,L_1)$, their relative J-grading $J(q)-J(p)$ is by definition given by the change of phase of
\begin{equation} 
    f_0 (u,y) = \prod_i u^{-1}_i \prod_{i,j} (1-a_i/y_j) \prod_{i \neq j} (1-y_i/y_j)^{-1}
\end{equation}
around a clockwise loop connecting the two points which goes from $p$ to $q$ along $L_0$ and from $q$ to $p$ along $L_1$.

The change of phase can be calculated in terms of the cylindrical model as follows: 

\begin{equation} \label{eqn:J deg combinatorial}
    J(q)-J(p) =
    \#\mathrm{branch\,points}(\Phi_C) - \# \Phi_y^{-1}(\mathbf{a}) + \Phi_u^{-1}(0) - \Phi_u^{-1}(\infty)
\end{equation}

Note that the grading in \cite{webster} is twice this grading.

\section{Cone as surgery} \label{sec: conesurgery}

It is a basic idea in Lagrangian Floer theory that if two Lagrangians $L, M$ intersect transversely at a single point, then the cone on the corresponding morphism $L \to M$ should be isomorphic to the Polterovich surgery $L \# M$ of the two Lagrangians in question.  Indeed, it is obvious that 
for any given test Lagrangian $N$, there is (for sufficiently small surgery) a natural bijection between the generators $N \cap (L \# M)$ of $\Hom(N, L \# M)$ and the generators  
$(N \cap L) \sqcup (N \cap M)$ of $\Hom(N, Cone(L \to M))$.  

When $\dim L = 1$, it is also easy to see that e.g. the differential disks for $\Hom(N, L \# M)$ can be put in bijection with the collection of differential disks for $\Hom(N, L)$, differential disks for $\Hom(N, M)$, and composition disks $N \to L \to M$.  The corresponding result in higher dimensions is less easy, but has been established \cite{FOOO-surgery}.  
There is also a version for `surgery at infinity' established in \cite{GPS2}.

Let us give the analogous statement in the multicurve formalism. We will need only the 
`surgery at infinity' version. 

\begin{proposition} \label{multicurve cone as surgery}
    Fix $\Gamma, \vec{d}, \mathbf{a}$.  
    Let $\gamma$ be an $\mathbf{a}$-admissible $\vec{d}$-multicurve.  
    Let $c$ and $d$ be curves labelled $i$, each disjoint from $\gamma$, such that $\gamma \sqcup c$ and $\gamma \sqcup d$ are $\mathbf{a}$-admissible $\vec{d}+1_i$-multicurves.  Let $c \to d$ be a Reeb chord, and assume the corresponding surgery-at-infinity $c \# d$ is also disjoint from $\gamma$.  Then there is an exact triangle 
    \begin{equation} \label{multicurve surgery} L_{c \sqcup \gamma} \to L_{d \sqcup \gamma} \to L_{(c \# d) \sqcup \gamma} \xrightarrow{[1]}\end{equation}
    where the first morphism is given by the chosen Reeb chord $c \to d$ and `the identity' (i.e. the intersection point corresponding to the identity after some perturbation) on $\gamma$.  
\end{proposition}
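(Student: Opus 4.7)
The strategy is to reduce to the standard surgery-at-infinity exact triangle of \cite{GPS2} by exploiting the product trivialization provided by Theorem \ref{trivialization}. First I would pass to the $W$-cover $\widetilde{\MCB}(\Gamma, \vec d + 1_i)_O \cong \mathbf{T}^\vee \times \mathbf{T}_O$, with an ordering of the multicurves that places $c$, $d$, and $c \# d$ in a single distinguished coordinate — call it the $(i, d_i{+}1)$ slot — and leaves the remaining slots occupied by the curves of $\gamma$. With this choice, the anchored lifts $\widetilde L_{c \sqcup \gamma}$, $\widetilde L_{d \sqcup \gamma}$, and $\widetilde L_{(c \# d) \sqcup \gamma}$ are genuine product Lagrangians sharing a common factor $L'_\gamma := \prod_{(j,\beta) \ne (i, d_i{+}1)} \bigl(i\R_{>0} \times \gamma_{j,\beta}\bigr)$, and differing only by an $i\R_{>0}$-times-$c$, $i\R_{>0}$-times-$d$, or $i\R_{>0}$-times-$(c\#d)$ factor in the distinguished slot.

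Next I would locate the Reeb chord. By hypothesis the chord $c \to d$ lives in the base $\C^*_{y_{i,d_i+1}}$ near its infinity; together with the identity factor on $i\R_{>0}$, it gives a Reeb chord at infinity between $\widetilde L_{c \sqcup \gamma}$ and $\widetilde L_{d \sqcup \gamma}$ that lies entirely in the distinguished $\C^*_{y} \times \C^*_u$ factor. The corresponding surgery at infinity is then, by construction of Polterovich/GPS2 surgery, performed only in that factor, yielding $i\R_{>0} \times (c \# d)$ there while the common factor $L'_\gamma$ is unchanged. Applying the surgery-at-infinity exact triangle of \cite[\S 6]{GPS2} inside the one-variable Fukaya--Seidel category of $(\C^*_y \times \C^*_u, u)$, and then taking the Künneth-type product with the fixed common factor $L'_\gamma$, produces the desired exact triangle at the level of the cover. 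Since the global superpotential $\mathcal{W}_{\mathbf{a}} = \sum_\alpha u_{i,\alpha}$ is a sum of the coordinates on each factor, the stops decouple slot-by-slot, so taking product with $L'_\gamma$ preserves exactness.

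To finish, I would descend the triangle from the cover to $\MCB(\Gamma, \vec d + 1_i)$ via the $W$-quotient and the embedding of Theorem \ref{five authors theorem}. The hypotheses that $\gamma \sqcup c$, $\gamma \sqcup d$ and the surgery are all $\mathbf{a}$-admissible and pairwise disjoint from $\gamma$ ensure that the Reeb chord, its $W$-translates, and the resulting surgery all avoid the root and matter divisors, so nothing is destroyed by passing to the quotient. Finally, identification of the first morphism with the Reeb chord $c \to d$ extended by `the identity' on $\gamma$ is immediate from the product description: the generator in $\Hom(\widetilde L_{c\sqcup\gamma}, \widetilde L_{d\sqcup\gamma})$ corresponding to this chord is precisely the tensor product of the Reeb chord generator in the distinguished slot and the identity self-intersection generators (after small perturbation) in each of the remaining slots.

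The main obstacle is the Künneth/locality step: one must check that the exact triangle of \cite{GPS2}, proved in general for a single Liouville pair, genuinely splits off a factor in our product setting with a summed superpotential, and that this survives $W$-equivariant descent. Since the surgery is a local modification in an arbitrarily small Weinstein neighborhood of the Reeb chord, and that neighborhood can be chosen inside the trivialized region $\mathbf{T}^\vee \times \mathbf{T}_O$ and in the distinguished factor, the reduction is ultimately to a local statement in one $\C^* \times \C^*$ factor where \cite{GPS2} applies verbatim, with everything else carried along as a Lagrangian product — this is the conceptual content that needs to be spelled out carefully.
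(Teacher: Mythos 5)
Your overall instinct---reduce to the surgery-at-infinity exact triangle of \cite{GPS2}---is the same as the paper's, but the mechanism you propose for the reduction has a genuine gap. The trivialization of Theorem \ref{trivialization} is only an isomorphism over the open locus $\mathbf{T}_O$, i.e.\ away from the root and matter divisors, and the Fukaya category of $\MCB(\Gamma,\vec d)$ is emphatically \emph{not} a ($W$-equivariant) Künneth product of one-variable categories over that locus: holomorphic disks in the Coulomb branch essentially interact with the removed divisors. This is exactly what the cylindrical model (Theorem \ref{sl2 cylindrical model}) encodes---disks correspond to branched covers $S \to C$ with poles of $\Phi_u$ at branch points and zeros over $\Phi_y^{-1}(\mathbf a)$---and it is why the category is defined over $\Z[u,\hbar]$, with $u,\hbar$ counting intersections with these divisors. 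Consequently, your claims that ``the stops decouple slot-by-slot,'' that exactness can be established factor-by-factor in a distinguished $\C^*_y\times\C^*_u$ slot and then tensored with $L'_\gamma$, and that the triangle can then be ``descended'' via the $W$-quotient, do not follow: morphism complexes in the ambient category are not slotwise tensor products (their differentials count disks with branching), and there is no descent mechanism from the open trivialized region to $\MCB$ that sees these contributions. The same problem undermines your final step: identifying the connecting morphism as ``Reeb chord tensor identity'' is not immediate, precisely because Hom spaces do not split as products. A related subtlety you assume away is that the Reeb chord ``in the base factor near its infinity'' is a Reeb chord at infinity of the Coulomb branch with respect to its actual conical structure---relating infinity in $\C^*$ to infinity in $\MCB$ is a nontrivial point.

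For comparison, the paper avoids any product decomposition: it either invokes \cite[Prop.~1.12]{GPS2} directly in the Coulomb branch, or (to sidestep the infinity-versus-infinity issue just mentioned) isotopes the surgery locus of $c\# d$ toward infinity and runs the action-filtration argument from the proof of \cite[Prop.~1.37]{GPS2} to get existence of \emph{some} exact triangle of the stated shape. The first morphism is then identified not by a tensor formula but by Hom-pairing against the multicurve objects of $Fuk_{|||}$ and using the cylindrical model to reduce the relevant disk counts to the elementary one-dimensional cone-equals-surgery argument. If you want to salvage your write-up, drop the Künneth/descent scaffolding, work directly in $\MCB$, and replace the ``immediate from the product description'' step by an actual comparison of the filtration-induced map with the Reeb-chord map after pairing with test multicurves, using the cylindrical model to control the disks.
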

\begin{proof}
    In fact this is just an example of the surgery (at infinity) exact triangle of \cite[Prop. 1.12]{GPS2}.  For the reader for whom the symplectic geometry justifying the previous sentence is not immediately obvious, let us sketch another argument.  The existence of {\em some} exact sequence of the form \eqref{multicurve surgery} follows by taking the curve $c \# d$, isotoping the surgery locus towards infinity, and applying the action filtration argument in the proof of \cite[Prop. 1.37]{GPS2}.\footnote{One could also apply the statement of said proposition, but that would require some consideration of the relationship between being near infinity in $\C^*$ and being near infinity in the Coulomb branch.  The reader who is comfortable with said consideration would presumably have been already comfortable with directly invoking \cite[Prop. 1.12]{GPS2}.}
    It remains to identify the first morphism in the sequence.  It suffices to do this after Hom pairing with any subcategory containing the objects in question; we use  $Fuk_{| | |}(\MCB(\Gamma, \vec{d}), \mathcal{W}_{\mathbf{a}})$.  We should show that, for multicurves $\delta$, the map 
    induced given by the action filtration argument
    $\Hom(L_{\delta}, L_{c \sqcup \gamma}) \to \Hom(L_{\delta},  L_{d \sqcup \gamma})$ is the same as the map given by the Reeb chord $c \to d$.  But the `cylindrical model' established in \cite{ADLSZ} reduces this to a (multicurve version of) the easy argument for cone=surgery for 1-dimensional Lagrangians.  
\end{proof}

\begin{corollary} \label{weak generation}
    The $T_\theta$
    generate $Fuk_{| | |}(\MCB(\Gamma, \vec{d}), \mathcal{W}_{\mathbf{a}}) \subset Fuk(\MCB(\Gamma, \vec{d}), \mathcal{W}_{\mathbf{a}})$.  
\end{corollary}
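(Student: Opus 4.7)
The plan is to induct on a complexity of the multicurve and apply Proposition \ref{multicurve cone as surgery} at each step. The base case is when every component of $\gamma$ is a straight vertical line in $\mathbb{C}^* = T^*S^1$: then $\gamma = \gamma(\theta)$ for some configuration $\theta$, so $L_\gamma = T_\theta$ is already among the generators. As a complexity measure I would take (for instance) the geometric intersection number of $\gamma$ with a fixed generic system of straight vertical lines, plus the total winding number of $\gamma$ around the cylinder -- these numbers vanish exactly when $\gamma$ is already a straight-line multicurve.

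For the inductive step, given $\gamma$ of positive complexity, I would pick a component $c$ of $\gamma$ that is not straight, and realize $c$ as a surgery-at-infinity $c_1 \# c_2$ of two simpler curves $c_1, c_2$ labelled by the same node. Geometrically: pick a point where $c$ bends sharply or crosses a reference vertical line, and split $c$ there into two arcs each of which is closer to being straight; the actual surgery is then performed near infinity using a short Reeb chord so that $c_1 \# c_2$ is isotopic (rel infinity) to $c$. Proposition \ref{multicurve cone as surgery} then provides the exact triangle
$$L_{c_1 \sqcup \gamma'} \to L_{c_2 \sqcup \gamma'} \to L_{c \sqcup \gamma'} \xrightarrow{[1]},$$
where $\gamma' = \gamma \setminus c$, with the first two objects having strictly smaller complexity. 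By the induction hypothesis both are generated by $T_\theta$'s, and hence so is $L_\gamma$.

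The main obstacle will be verifying that the inductive step can be carried out while staying within the $\mathbf{a}$-admissible non-closed multicurve category. One must choose the decomposition $c = c_1 \# c_2$ so that both $c_1 \sqcup \gamma'$ and $c_2 \sqcup \gamma'$ are $\mathbf{a}$-admissible (labels must remain consistent with the quiver, and neither $c_1$ nor $c_2$ should pass through a red point of its own label), and so that the chosen complexity strictly decreases. For $\Gamma = \bullet$ admissibility reduces to mutual disjointness of components and avoidance of same-colored red points, so the geometric picture is transparent, and most natural decompositions of a non-straight curve into two straighter arcs will work; for general $\Gamma$ one must also respect the adjacency-based disjointness constraints. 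A related subtlety is that the triangle produced by Proposition \ref{multicurve cone as surgery} involves surgery of a single new curve with $c$, not two; in practice one applies the proposition twice (once to cut off a small straight segment $\ell$ from $c$, leaving a simpler arc $c_0$; then $L_{\ell \sqcup \gamma'} \to L_{c_0 \sqcup \gamma'} \to L_{c \sqcup \gamma'}$ already realises the inductive reduction), and one checks that this cutting operation can always be arranged to strictly reduce the chosen complexity.
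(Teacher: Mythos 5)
Your overall strategy (induct on a complexity, break one component at a time via Proposition \ref{multicurve cone as surgery}) is the same as the paper's, but there is a genuine gap in how the induction terminates. When you split a curve by a surgery at infinity, one of the two resulting pieces generically has \emph{both} ends at the same end of $\C^*$ (e.g.\ splitting a $0$-to-$\infty$ curve by pushing toward $\infty$ produces one $0$-to-$\infty$ arc and one $\infty$-to-$\infty$ arc). Such ``cap-shaped'' components are never isotopic to vertical lines, so your stated base case characterization --- that your complexity (intersections with a generic system of vertical test lines plus winding) vanishes exactly when $\gamma$ is a straight-line multicurve --- is false: a cap contained in one sector has zero intersections with the test lines and zero winding but is not any $T_\theta$. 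Consequently your induction hypothesis does not cover the pieces your own inductive step produces, and the argument does not close. The paper's proof is arranged precisely to deal with this: the complexity is the number of intersections with the \emph{red} lines through the points of $\mathbf{a}$, and the breaking point is chosen maximally far from $a_i$ along its red line so that the two pieces are honestly disjoint from the rest of the multicurve (a hypothesis of Proposition \ref{multicurve cone as surgery} you only gesture at). At the end of that induction the leftover components disjoint from the red lines are either constant-argument lines, i.e.\ $T_\theta$ components, or caps enclosing none of the $a_i$; the essential extra ingredient, absent from your proposal, is that the latter are \emph{zero objects} because they can be displaced by compactly supported Hamiltonian isotopy, so they may be discarded from the iterated cone.

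Two smaller points. First, your final paragraph misreads Proposition \ref{multicurve cone as surgery}: it already gives the triangle $L_{c\sqcup\gamma'} \to L_{d\sqcup\gamma'} \to L_{(c\# d)\sqcup\gamma'}$ in which the curve of interest is the surgery of the two simpler ones, so no ``second application'' is needed; your substitute triangle is just the proposition itself. Second, even setting aside the caps, you would need to verify that both pieces strictly decrease your complexity and remain $\mathbf{a}$-admissible relative to $\gamma'$; with the paper's choice of breaking point (maximally distant from $a_i$ along the red line) both are automatic, whereas ``split where it bends sharply'' requires an argument.
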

\begin{proof}
    The basic idea is to iteratively stretch and break multicurves using Proposition \ref{multicurve cone as surgery}.

    In more detail, assume without loss of generality that the $\mathbf{a}$ have different arguments.  Draw on the annulus the lines of fixed argument $\arg(a_i)$; we term them `red lines'. 
    Fix a multicurve $\eta$ of interest; assume without loss of generality that it is not tangent to red lines.  

    Choose an intersection of a red line and the multicurve, maximally distant along the red line from the corresponding $a_i$.  Let us write $k$ for the component of $\eta$ containing this point, and $\gamma := \eta \setminus k$.  We may push $k$ towards $0$ or $\infty$ along the red line, without meeting any other components of the multicurve.  We write correspondingly $k$ as the surgery at infinity of some $c$ and $d$, which are asymptotic to one side and the other of the red line.  Then the exact triangle \eqref{multicurve surgery} expresses $L_{\gamma'}$ as the cone $L_{c \cup \gamma} \to L_{d \cup \gamma}$ on two objects, each of which have one less total number of intersections with red lines.  

    We see by induction (on the number of intersections with red lines) that we may write  $L_{\eta}$ as an iterated cone of $L_\mu$, where each multicurve $L_\mu$ is disjoint from the red lines.  (If $\eta = \coprod \eta_i$, then there are $\prod_i \# \pi_0 (\eta_i \setminus \mathrm{red\, lines})$ such $L_\mu$ in the iterated cone.)  Such a multicurve is a union of (curves isotopic to) (1) lines of constant argument, and (2) curves which have both endpoints at zero or infinity and enclose none of the $a_i$. 
    But if $\mu$ has any type (2) curves, it is a zero object, as it can be displaced from any other Lagrangian by compactly supported Hamiltonian isotopy.  After suppressing  zero objects, we have expressed our original $L_\eta$ as a twisted complex in objects $T_\theta$. 
\end{proof}

\begin{remark}
    Rather than work inductively, one could argue for Corollary \ref{weak generation} by stretching all
    the curves simultaneously and applying 
    \cite[Prop. 1.37]{GPS2}. 
\end{remark}

\section{Zero objects} \label{zeroes}

\begin{figure}
    \centering
    \begin{subfigure}[b]{0.3\textwidth}
    \centering
     \begin{tikzpicture}
        \node[red] at (0,0) {$*$};
        \draw[thick] (-0.5,-1.5) -- (-0.5,1);
        \draw[thick] (0.5,-1.5) -- (0.5,1);
        \draw[thick] (-0.5, 1) arc (180:0:0.5);
        \draw[thick] (-1,-1.5) -- (-1,1.5);
    \end{tikzpicture} 
    \caption{The Lagrangian $U \times T_i$}
    \label{fig:stabxT1}
    \end{subfigure}
    \begin{subfigure}[b]{0.3\textwidth}
    \centering
     \begin{tikzpicture}
        \node[red] at (0,0) {$*$};
        \draw[thick] (-0.5,-1.5) -- (-0.5,1);
        \draw[thick] (0.5,-1.5) -- (0.5,1);
        \draw[thick] (-0.5, 1) arc (180:0:0.5);
        \draw[thick] (1,-1.5) -- (1,1.5);
    \end{tikzpicture} 
    \caption{The Lagrangian $U \times T_{i+1}$}
    \label{fig:stabxT2}
    \end{subfigure}
    \caption{Two equivalent Lagrangians}
\end{figure}
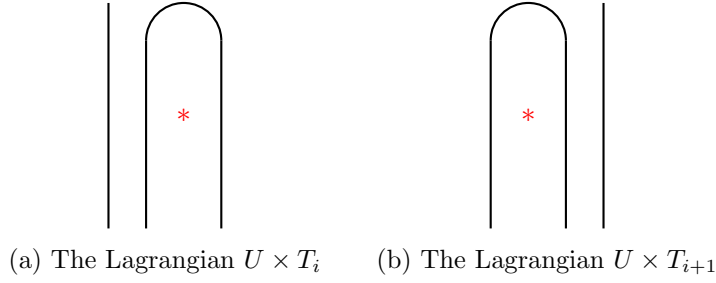

Consider the two Lagrangians $U \times T_i$ and $U \times T_{i+1}$ shown in Figures \ref{fig:stabxT1} and \ref{fig:stabxT2}.

\begin{proposition}\label{slide move map}
    The lowest J-degree in $H^* \mathrm{Hom}(U \times T_{i+1}, U \times T_i)$ has a unique map up to scalar multiple, represented by the intersection point $p_1 q_1 x_p^0 x_q^0$ in Figure \ref{fig:map:stabxT2->stabxT1}.
\end{proposition}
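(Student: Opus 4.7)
The strategy is to enumerate all intersection points contributing to $\mathrm{Hom}(U\times T_{i+1}, U\times T_i)$ using the cylindrical model of Section \ref{sec: ADLSZ}, compute their relative J-degrees via equation \eqref{eqn:J deg combinatorial}, and verify that $p_1 q_1 x_p^0 x_q^0$ is alone in the minimal J-degree. Uniqueness in cohomology then follows formally: the Floer differential preserves J-degree, so a unique minimal-J generator is automatically a cocycle (it has nothing to map to at the same J-degree and higher cohomological degree) and cannot be a coboundary (nothing at the same J-degree and lower cohomological degree), so it spans the lowest-J part of $H^*\mathrm{Hom}$.

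To enumerate generators I would choose a small Hamiltonian perturbation of the target so that the base projections to $\C^*_y$ meet transversely, using positive wrapping near $\infty$ and near $0$ to produce Reeb chord intersections between strands which are parallel in the page. The base intersections decompose by which component of the source meets which component of the target: crossings between the two perturbed copies of the $U$-shape, crossings between the two straight strands after wrapping (since they lie on opposite sides of the $U$), and possibly $U$-to-straight-strand crossings. Each base intersection carries fiber data in $\C^*_u$ constrained by Theorem \ref{sl2 cylindrical model}, so that zeros of $\Phi_u$ lie over $\Phi_y^{-1}(\mathbf{a})$ and poles over branch points of $\Phi_C$. The labels $p_1, q_1$ pick out a specific pair of base intersections, while the superscripts $0$ on $x_p, x_q$ select the minimal-exponent fiber data above them.

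Next I would apply \eqref{eqn:J deg combinatorial} to compare J-degrees. The $\Phi_u^{-1}(0) - \Phi_u^{-1}(\infty)$ term shows that above any fixed pair of base intersections, increasing the fiber exponents strictly increases the J-degree, so $x_p^0, x_q^0$ minimize the fiber contribution there. Varying the base intersection changes the first two terms of the formula, and a direct combinatorial count should show that the specific pair $(p_1, q_1)$--corresponding to the shortest Reeb chords realizing the isotopy from $U\times T_{i+1}$ to $U\times T_i$--minimizes the base contribution as well. The main obstacle is systematic enumeration: wrapping around the cylinder can in principle introduce additional intersections far from the picture, and one must check that none of those, combined with their allowed fiber data, undercuts $p_1 q_1 x_p^0 x_q^0$ in J-degree. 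Once a concrete perturbation is fixed, this reduces to a finite check using Theorem \ref{sl2 cylindrical model} and the J-degree algorithm.
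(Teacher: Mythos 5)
Your route is genuinely different from the paper's, and it is sound in outline. The paper's own proof first computes the Floer differential: using the cylindrical model of Theorem \ref{sl2 cylindrical model} it exhibits the unique nontrivial disk, obtaining $\partial(p_1 q_2 x_p^i x_q^j) = \pm\, p_1 q_1 x_p^i x_q^{j+1}$ and $\partial(p_1 q_1 x_p^i x_q^j) = 0$; it then passes to cohomology (so only the classes $p_1 q_1 x_p^i x_q^0$ survive) and needs only the same-base-point comparison $J(p_1 q_1 x_p^{i+1} x_q^j) - J(p_1 q_1 x_p^i x_q^j) = 1$. You instead avoid disk counting entirely and want to grade \emph{all} chain generators, concluding by the formal argument that a generator alone in the minimal J-degree is closed, non-exact, and spans the lowest-J piece of $H^*\Hom$. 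That reduction is fine, granted two inputs: (a) the differential preserves the J-degree --- true, and cheap to justify, since by Theorem \ref{sl2 cylindrical model} every relevant disk has exactly one zero of $\Phi_u$ over each point of $\Phi_y^{-1}(\mathbf{a})$ and one pole over each branch point, so the four terms of \eqref{eqn:J deg combinatorial} cancel; and (b) $p_1 q_1 x_p^0 x_q^0$ really is the unique chain-level generator in the minimal J-degree.

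The gap is in (b), precisely at the step you delegate to ``a direct combinatorial count should show'': the comparison of $J(p_1 q_2 x_p^i x_q^j)$ with $J(p_1 q_1 x_p^i x_q^j)$. The J-degree of a pair of generators is not a sum of separately minimizable ``base'' and ``fiber'' contributions: \eqref{eqn:J deg combinatorial} is evaluated on a connecting strip (equivalently, on the winding of the phase of $f_0$ along a loop in the total space), and for generators lying over different base intersection points the fiber part of any such strip is forced by how the wrapped curves in $\C^*_u$ rotate as the base point traverses the bigon from $q_2$ to $q_1$; the resulting $\Phi_u^{-1}(0)$ and $\Phi_u^{-1}(\infty)$ counts are nonzero and must be tracked together with the bigon's single passage over $\mathbf{a}$. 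The correct outcome --- that the $q_2$-type generators sit one J-degree \emph{above} the corresponding $q_1$-type ones, which is what your minimality claim needs --- is most easily extracted exactly the way the paper does it, from the disk computation plus J-invariance of the differential; obtaining it ``combinatorially'' still requires exhibiting an explicit strip in the cylindrical model, i.e.\ essentially the same geometric object the paper counts. So your strategy can be completed, but as written the decisive computation is missing, and the decoupling heuristic you offer in its place is not valid reasoning for it.
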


\begin{proof}
    In order to compute the morphisms from $U \times T_{i+1}$ to $U \times T_i$, we will wrap $U \times T_{i+1}$, as shown in Figure \ref{fig:map:stabxT2->stabxT1}. From the figure, the intersection points in the base are $p_1 q_1$ and $p_1 q_2$. Label the intersection points in the fiber as $x_p^i$ and $x_q^i$ for $i \in \mathbb{Z}_{\geq 0}$.
    To count disks, will use the `cylindrical model' of  \cite[Thm 1.5]{ADLSZ}, recalled above in Theorem \ref{sl2 cylindrical model}.  

    In the base $\C^*_y$, there is a unique nontrivial disk, depicted in Figure \ref{fig:map:stabxT2->stabxT1 base}.  A disk mapping to $Sym^2 (\C^*_y)$ must therefore, in the cylindrical model, be described as a map from a disjoint union of two disks, one of which maps to the visible disk in Figure \ref{fig:map:stabxT2->stabxT1 base}, and the other which maps to a point, necessarily $p_1$. 
    Let us now consider the possible corresponding maps $\Phi_u: S \to \mathbb{P}^1_u$.  The component of $S$ on which $\Phi_y = p_1$ must, since it does not pass through either $\mathbf{a}$ or have branch points, be mapped by $\Phi_u$ to the complement of $0, \infty$, and moreover can have its boundary along only two of the depicted curves in $\mathbb{C}^*_u$. The only possibility is for it to map to a point.  Consider the nontrivial component.  As the corresponding $\Phi_C$ again has no branch points, $\Phi_u$ must avoid $\infty$; moreover, $\Phi_y^{-1}(\mathbf{a})$ is a single point.  From this one can see that $\Phi_u$ must have image of the form given in Figure 
\ref{fig:map:stabxT2->stabxT1 fiber}.

    We find $\partial(p_1 q_2 x_p^i x_q^j) = \pm p_1 q_1 x_p^i x_q^{j+1}$ and $\partial(p_1 q_1 x_p^i x_q^j) = 0$. Therefore, $\Hom(U \times T_{i+1}, U \times T_i)$ is generated by $p_1 q_2 x_p^i x_q^0$ for $i \in \mathbb{Z}_{\geq 0}$.
    Using equation \eqref{eqn:J deg combinatorial}, $J(p_1 q_i x_p^{i+1} x_q^j) - J(p_1 q_i x_p^i x_q^j) = 1$.  Thus $p_1 q_1 x_p^0 x_q^0$ is a unique closed and not exact intersection point in the lowest J degree.
\end{proof}

\begin{proposition}\label{slide move}
    The map in Proposition \ref{slide move map} is an isomorphism.
\end{proposition}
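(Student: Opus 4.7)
The strategy is to exhibit an inverse to the morphism $\phi := p_1 q_1 x_p^0 x_q^0$ and verify that both compositions reduce to the identity, using grading constraints to sidestep most disk counting.

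First, I would repeat the wrapping argument of Proposition \ref{slide move map} with the roles of $U \times T_i$ and $U \times T_{i+1}$ interchanged: wrap $U \times T_i$ so that its vertical strand sweeps past the cap of the $U$ from the opposite side. An identical cylindrical-model calculation (Theorem \ref{sl2 cylindrical model}) together with the $J$-grading formula \eqref{eqn:J deg combinatorial} isolates a unique lowest-$J$-degree morphism $\psi: U \times T_i \to U \times T_{i+1}$, represented by an intersection point of the same combinatorial type as $\phi$.

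Second, I would analyze $\End(U \times T_j)$ for $j \in \{i,i+1\}$ by the same wrap-and-count procedure. Wrapping $U \times T_j$ against a small perturbation of itself, Theorem \ref{sl2 cylindrical model} produces a finite list of generators whose $J$-gradings are read off from \eqref{eqn:J deg combinatorial}; one checks that the lowest $J$-degree is attained uniquely (up to sign) by the tautological identity intersection and is not a boundary, so the lowest-$J$-degree cohomology is a rank-one free $\Z[u,\hbar]$-module generated by $\mathrm{id}_{U \times T_j}$.

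Third, I would compute the compositions $\psi \circ \phi \in \End(U \times T_{i+1})$ and $\phi \circ \psi \in \End(U \times T_i)$ via holomorphic triangles in the cylindrical model. By Theorem \ref{sl2 cylindrical model}, a triangle contributing to such a composition consists of a base component in $\C^*_y$ that is a small bigon disjoint from the red points together with a fiber map to $\mathbb{P}^1_u$ whose zeros and poles are prescribed by the theorem; in the relevant degree there is a unique such triangle, contributing $\pm 1$. Hence each composition lands nontrivially in the lowest $J$-degree of the endomorphism space, and by the rank-one result it must equal $\pm \mathrm{id}$, so $\phi$ is an isomorphism.

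\textbf{The main obstacle} is matching the specific intersection point produced by the composition $\psi \circ \phi$ (and $\phi \circ \psi$) with the identity generator of the lowest $J$-degree of $\End(U \times T_j)$; this demands careful bookkeeping of wrapping profiles and anchorings across the three Hom computations so that the absolute $J$-gradings of the composed output and the putative identity coincide. Once that identification is made, the grading constraint forces equality up to a unit and bypasses what would otherwise be a delicate disk enumeration.
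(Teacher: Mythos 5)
Your route is genuinely different from the paper's: the paper first trades both Lagrangians for KLRW complexes via the cone-as-surgery resolutions (so the problem becomes finite diagrammatic algebra), then writes down an explicit inverse $g$ — note its components carry dots — together with explicit homotopies $h,s$ proving $g\circ f\sim \mathrm{id}$ and $f\circ g\sim \mathrm{id}$. You instead stay on the geometric side: produce a candidate inverse $\psi$ by a mirror wrapping computation, compute $\End(U\times T_j)$, and pin down the compositions by a triangle count plus grading constraints. In principle such a geometric argument could work, but as written it has a genuine gap exactly at the step you try to treat as routine.

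The gap is the composition step. First, the assertion that the relevant triangle has ``a base component which is a small bigon disjoint from the red points,'' is unique, and contributes $\pm 1$ is unsupported, and it is precisely the delicate enumeration you claim to bypass: in this geometry the disks computing such compositions involve the fiber $\C^*_u$ direction and the region near the red point nontrivially (compare Figure \ref{fig:selfhomUxU}, and Lemma \ref{U and I}, where verifying the analogous identity $p\cdot q=\mathrm{id}$ required the moduli-space/intersection-number argument with an interior marked point). Second, the grading constraint is too weak to finish even granted nonvanishing: $u$ and $\hbar$ have J-degree zero, so the lowest-J part of $\End(U\times T_j)$ contains $u^a\hbar^b\cdot\mathrm{id}$ as well as integer multiples of $\mathrm{id}$, and a nonzero composite of the correct J-degree could still be, say, $u\hbar\cdot\mathrm{id}$ or $2\,\mathrm{id}$, which is not invertible over $\Z[u,\hbar]$; one needs the $u$-, $\hbar$-gradings and an honest $\pm1$ count. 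Third, the ``main obstacle'' you flag — matching absolute gradings so that $\psi\circ\phi$ lands in the identity's degree — is not a bookkeeping formality: the two Hom spaces are not mirror images of each other (the actual inverse in the paper's KLRW model requires dots, i.e.\ carries $u$-weight), so your claim that the reverse computation yields a $\psi$ ``of the same combinatorial type as $\phi$'' is not correct, and without resolving this the argument that the composite sits in the identity's degree, let alone equals $\pm\mathrm{id}$, does not go through. The paper's detour through the KLRW resolutions is what makes the inverse and homotopies explicitly checkable; if you want to stay geometric, you would need to carry out the triangle/moduli analysis at the level of Lemma \ref{U and I} rather than assume it.
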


\begin{proof} Using Proposition \ref{multicurve cone as surgery}, 
    the two Lagrangians can be resolved as
    \begin{equation}
        U \times T_i \cong \left\{ T_i \times T_{i+1} \xrightarrow{\begin{tikzpicture}[scale=.5]
	\begin{pgfonlayer}{nodelayer}
		\node [style=none] (0) at (0, 0.5) {};
		\node [style=none] (2) at (0, -0.25) {};
		\node [style=none] (4) at (-0.25, -0.25) {};
		\node [style=none] (5) at (0.25, 0.5) {};
		\node [style=none] (6) at (-0.5, -0.25) {};
		\node [style=none] (7) at (-0.5, 0.5) {};
	\end{pgfonlayer}
	\begin{pgfonlayer}{edgelayer}
		\draw [style=su2 puncture strand] (0.center) to (2.center);
		\draw [style=brane, in=90, out=-90, looseness=1.25] (5.center) to (4.center);
		\draw [style=brane, in=90, out=-90, looseness=1.25] (7.center) to (6.center);
	\end{pgfonlayer}
\end{tikzpicture}} T_i \times T_i \right\}
    \end{equation}
    and
     \begin{equation}
        U \times T_{i+1} \cong \left\{ T_{i+1} \times T_{i+1} \xrightarrow{\begin{tikzpicture}[scale=.5]
	\begin{pgfonlayer}{nodelayer}
		\node [style=none] (0) at (-0.25, 0.5) {};
		\node [style=none] (2) at (-0.25, -0.25) {};
		\node [style=none] (4) at (-0.5, -0.25) {};
		\node [style=none] (5) at (0, 0.5) {};
		\node [style=none] (6) at (0.25, -0.25) {};
		\node [style=none] (7) at (0.25, 0.5) {};
	\end{pgfonlayer}
	\begin{pgfonlayer}{edgelayer}
		\draw [style=su2 puncture strand] (0.center) to (2.center);
		\draw [style=brane, in=90, out=-90, looseness=1.25] (5.center) to (4.center);
		\draw [style=brane, in=90, out=-90, looseness=1.25] (7.center) to (6.center);
	\end{pgfonlayer}
\end{tikzpicture}} T_i \times T_{i+1} \right\}
    \end{equation}
    where the lowest term in is homological degree zero. 
    
    By Proposition \ref{slide move map}, the lowest J-degree in $H^* \mathrm{Hom}(U \times T_{i+1}, U \times T_i)$ has a unique map up to scalar multiple.  It is straightforward to check that the following is a chain map of minimal J-degree, hence represents this class: 
    \begin{equation}
        f_1 = -\vcenter{\hbox{\begin{tikzpicture}[scale=.5]
	\begin{pgfonlayer}{nodelayer}
		\node [style=none] (0) at (-0.25, 0.5) {};
		\node [style=none] (2) at (-0.25, -0.25) {};
		\node [style=none] (4) at (-0.5, -0.25) {};
		\node [style=none] (5) at (0.25, 0.5) {};
		\node [style=none] (6) at (0, -0.25) {};
		\node [style=none] (7) at (0, 0.5) {};
	\end{pgfonlayer}
	\begin{pgfonlayer}{edgelayer}
		\draw [style=su2 puncture strand] (0.center) to (2.center);
		\draw [style=brane, in=90, out=-90, looseness=1.25] (5.center) to (4.center);
		\draw [style=brane, in=90, out=-90, looseness=1.25] (7.center) to (6.center);
	\end{pgfonlayer}
\end{tikzpicture}}}, \qquad f_0 = -\vcenter{\hbox{\begin{tikzpicture}[scale=.5]
	\begin{pgfonlayer}{nodelayer}
		\node [style=none] (0) at (0, -0.25) {};
		\node [style=none] (2) at (0, 0.5) {};
		\node [style=none] (4) at (0.25, 0.5) {};
		\node [style=none] (5) at (-0.5, -0.25) {};
		\node [style=none] (6) at (-0.25, 0.5) {};
		\node [style=none] (7) at (-0.25, -0.25) {};
	\end{pgfonlayer}
	\begin{pgfonlayer}{edgelayer}
		\draw [style=su2 puncture strand] (0.center) to (2.center);
		\draw [style=brane, in=-90, out=90, looseness=1.25] (5.center) to (4.center);
		\draw [style=brane, in=-90, out=90, looseness=1.25] (7.center) to (6.center);
	\end{pgfonlayer}
\end{tikzpicture}}}.
    \end{equation}

    To see that $f$ is an isomorphism, we need a chain map $g: U \times T_i \to U \times T_{i+1}$ such that $f \circ g$ and $g \circ f$ are both homotopic to the identity map. Let $g$ be
    \begin{align}
        g_1 = \vcenter{\hbox{\begin{tikzpicture}[scale=.5]
	\begin{pgfonlayer}{nodelayer}
		\node [style=none] (0) at (-0.25, -0.25) {};
		\node [style=none] (2) at (-0.25, 0.5) {};
		\node [style=none] (4) at (-0.5, 0.5) {};
		\node [style=none] (5) at (0.25, -0.25) {};
		\node [style=none] (6) at (0, 0.5) {};
		\node [style=none] (7) at (0, -0.25) {};
		\node [style=dot] (8) at (0.175, 0) {};
	\end{pgfonlayer}
	\begin{pgfonlayer}{edgelayer}
		\draw [style=su2 puncture strand] (0.center) to (2.center);
		\draw [style=brane, in=-90, out=90, looseness=1.25] (5.center) to (4.center);
		\draw [style=brane, in=-90, out=90, looseness=1.25] (7.center) to (6.center);
	\end{pgfonlayer}
\end{tikzpicture}}}, \qquad g_0 = \vcenter{\hbox{\begin{tikzpicture}[scale=.5]
	\begin{pgfonlayer}{nodelayer}
		\node [style=none] (0) at (0, 0.5) {};
		\node [style=none] (2) at (0, -0.25) {};
		\node [style=none] (4) at (0.25, -0.25) {};
		\node [style=none] (5) at (-0.5, 0.5) {};
		\node [style=none] (6) at (-0.25, -0.25) {};
		\node [style=none] (7) at (-0.25, 0.5) {};
		\node [style=dot] (8) at (-0.425, 0.25) {};
	\end{pgfonlayer}
	\begin{pgfonlayer}{edgelayer}
		\draw [style=su2 puncture strand] (0.center) to (2.center);
		\draw [style=brane, in=90, out=-90, looseness=1.25] (5.center) to (4.center);
		\draw [style=brane, in=90, out=-90, looseness=1.25] (7.center) to (6.center);
	\end{pgfonlayer}
\end{tikzpicture}}}.
    \end{align}
    Using the chain homotopy $h: U \times T_{i+1} \to U \times T_{i+1}[1]$ given by $h_1 = \vcenter{\hbox{\begin{tikzpicture}[scale=.5]
	\begin{pgfonlayer}{nodelayer}
		\node [style=none] (0) at (-0.25, -0.25) {};
		\node [style=none] (2) at (-0.25, 0.5) {};
		\node [style=none] (4) at (-0.5, 0.5) {};
		\node [style=none] (5) at (0.25, -0.25) {};
		\node [style=none] (6) at (0, 0.5) {};
		\node [style=none] (7) at (0, -0.25) {};
	\end{pgfonlayer}
	\begin{pgfonlayer}{edgelayer}
		\draw [style=su2 puncture strand] (0.center) to (2.center);
		\draw [style=brane, in=-90, out=90, looseness=1.25] (5.center) to (4.center);
		\draw [style=brane, in=-90, out=90, looseness=1.25] (7.center) to (6.center);
	\end{pgfonlayer}
\end{tikzpicture}}}$, it follows that $g \circ f \sim \mathrm{id}_{U \times T_{i+1}}$. Similarly, using $s: U \times T_i \to U \times T_i$ given by $s_1 = \vcenter{\hbox{\begin{tikzpicture}[scale=.5]
	\begin{pgfonlayer}{nodelayer}
		\node [style=none] (0) at (0, 0.5) {};
		\node [style=none] (2) at (0, -0.25) {};
		\node [style=none] (4) at (0.25, -0.25) {};
		\node [style=none] (5) at (-0.5, 0.5) {};
		\node [style=none] (6) at (-0.25, -0.25) {};
		\node [style=none] (7) at (-0.25, 0.5) {};
	\end{pgfonlayer}
	\begin{pgfonlayer}{edgelayer}
		\draw [style=su2 puncture strand] (0.center) to (2.center);
		\draw [style=brane, in=90, out=-90, looseness=1.25] (5.center) to (4.center);
		\draw [style=brane, in=90, out=-90, looseness=1.25] (7.center) to (6.center);
	\end{pgfonlayer}
\end{tikzpicture}}}$, it follows that $f \circ g \sim \mathrm{id}_{U \times T_i}$.
\end{proof}

\begin{corollary}\label{double stab is zero}
    The Lagrangian shown in Figure \ref{fig:double_stab} is a zero object.
\end{corollary}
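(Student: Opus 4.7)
The plan is to exhibit the double stabilization Lagrangian as the cone of an isomorphism, hence as a zero object. Call the double stab $D$; it consists of two nested $U$-caps, both enclosing the same red point $a_i$, together with whatever other straight strands may appear.

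The first step is to apply Proposition \ref{multicurve cone as surgery} to one of the two $U$-caps in $D$. Isotope the semicircular cap of this $U$ out to infinity along the cylinder and view it as a surgery at infinity of two straight strands $c$ and $d$ sitting on either side of the red point $a_i$. This presents $D$ as the cone
\begin{equation}
    D \;\cong\; \mathrm{Cone}\bigl(\, U \times T_c \;\longrightarrow\; U \times T_d \,\bigr),
\end{equation}
where the remaining $U$ is the other cap (still enclosing $a_i$), and the connecting morphism is the Reeb chord from $c$ to $d$ tensored with the identity on the remaining factors.

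The second step is to identify this cone morphism. Both $U\times T_c$ and $U\times T_d$ are isomorphic by (the proof of) Proposition \ref{slide move}, since $T_c$ and $T_d$ differ only by sliding a straight strand past the red point. Now by Proposition \ref{slide move map}, the space $\mathrm{Hom}(U\times T_c,\; U\times T_d)$ has a one-dimensional piece in its lowest $J$-degree, spanned by the slide-move isomorphism $f$. The cone morphism above is represented by a single Reeb chord of the minimal length crossing one red strand, so it has exactly the $J$-degree of $f$ and lies in that lowest-degree component (the auxiliary $u$- and $\hbar$-gradings also match, since the Reeb chord picks up no intersections with the root or matter divisors beyond those of $f$). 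Thus the cone differential equals a scalar multiple of $f$, and the scalar is nonzero because the Reeb chord counts the obvious transverse intersection with multiplicity one (the same disk configuration computed in Proposition \ref{slide move map}).

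The final step is the conclusion: the cone morphism is a nonzero scalar multiple of the isomorphism $f$ of Proposition \ref{slide move}, and hence is itself an isomorphism, so its cone $D$ is zero.

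The main obstacle is the middle step: confirming that the cone morphism is in fact the isomorphism and not merely some scalar multiple that could vanish. This is handled by the $J$-grading constraint together with the explicit geometric identification of the generating intersection point from Proposition \ref{slide move map}; all other potential contributions would sit in strictly higher $J$-degree or would require dots or extra crossings, which are forbidden by the $u$- and $\hbar$-gradings.
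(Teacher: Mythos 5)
Your proposal is correct and takes essentially the same route as the paper's own (first) argument: the paper likewise identifies the double-stab Lagrangian as the cone of the slide-move isomorphism $f$ of Proposition \ref{slide move}, which is exactly what you obtain by breaking the cap via Proposition \ref{multicurve cone as surgery} and matching the connecting Reeb-chord morphism with the lowest-J-degree generator of Proposition \ref{slide move map}. Only note that it must be the \emph{outer} cap you break (breaking the inner one would give strands crossing the outer arc, hence an inadmissible multicurve), and the paper additionally records an alternative argument exhibiting explicit disks that make the identity morphism exact.
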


\begin{proof}
    This Lagrangian is isomorphic to the cone of the isomorphism $f$ from Proposition \ref{slide move}.
    
    For an alternate argument, can also explicitly exhibit  disks showing the identity is exact.  One can show $\partial(u_1 v_2 1_u 1_v) = \pm 1_u x_v u_1 v_1 \pm 1_p 1_q p_1 q_1$.  The first of these terms is similar to the disk in Figure \ref{fig:map:stabxT2->stabxT1}; the second is depicted in Figure \ref{fig:selfhomUxU}.  (To actually prove that there is a unique such disk can be done by an argument similar to that we give later in Proposition \ref{U and I} below.)  There is a similar disk showing $\partial(x_p 1_q p_1 q_2) = \pm 1_u x_v u_1 v_1$.  Adding these shows the identity is exact. 
\end{proof}

\begin{corollary}\label{general zero objects}
    Any Lagrangian that formed from the product of the multicurve shown in Figure \ref{fig:double_stab} with another multicurve is zero.
\end{corollary}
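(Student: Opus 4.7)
The plan is to reduce to Corollary \ref{double stab is zero} by showing that the argument there extends verbatim once additional non-interacting strands are placed alongside the double-stab multicurve. Write $L$ for the double-stab Lagrangian of Figure \ref{fig:double_stab} and $M = L_\mu$ for the Lagrangian associated to the companion multicurve $\mu$; the hypothesis is that $L$ and $M$ can be placed side by side without introducing any intersections, so the product $L \times M$ makes sense in the notation of Section \ref{Lambda object}.

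First I would resolve $L \times M$ by a cone, mirroring the proof of Corollary \ref{double stab is zero}. Applying Proposition \ref{multicurve cone as surgery} to the upper cap of the double stab (in the presence of $\mu$) writes $L \times M$ as the cone of a morphism
\[
(U \times T_{i+1}) \times M \;\longrightarrow\; (U \times T_i) \times M,
\]
and this morphism is exactly the map $f$ of Proposition \ref{slide move} extended by the identity on the strands of $\mu$. Hence it suffices to show that $f \times \mathrm{id}_M$ is still an isomorphism.

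Next I would verify this by extending each of the chain maps $f, g$ and the chain homotopies $h, s$ from the proof of Proposition \ref{slide move} by the identity on the extra strands of $\mu$. The key observation is that the KLRW relations are local: the maps, compositions, and homotopies established in Proposition \ref{slide move} are all computed in a region involving only the strands of $U$, $T_i$, $T_{i+1}$; since by definition of the product notation no new crossings with $\mu$ are introduced, one can read off $f \circ g \sim \mathrm{id}$ and $g \circ f \sim \mathrm{id}$ in the product setting by the identical diagrammatic manipulations, simply with vertical strands of $\mu$ tacked on. Via Theorem \ref{five authors theorem}, which respects composition, this gives the desired isomorphism in the Fukaya category, and its cone $L \times M$ is then a zero object.

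The main thing to be careful about is the locality claim: one must confirm that the relative position of $\mu$ does not introduce a crossing forcing some diagrammatic relation to fire in a way that disturbs the computation. This is essentially automatic from the assumption that the multicurves are placed ``side by side,'' so no $\mu$-strand lies between the strands involved in the double stab, and therefore the extended maps truly factor through the sub-diagrams to which Proposition \ref{slide move} applies. (Alternatively, one could reprove Corollary \ref{double stab is zero} directly in the enlarged setting by exhibiting the same disk counts; the disks of Figures \ref{fig:map:stabxT2->stabxT1 base} and \ref{fig:selfhomUxU} avoid $\mu$, and the cylindrical-model argument of Proposition \ref{slide move map} restricts any disk that contributes to the relevant Floer differential to the double-stab region.)
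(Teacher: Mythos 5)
Your argument works, essentially verbatim with the paper's own first step, in the special case where the companion multicurve consists of vertical lines $T_\theta$ placed disjointly from the double stab: then the chain maps and homotopies of Proposition \ref{slide move} extend by identity strands and the cone presentation goes through. But the corollary is about an \emph{arbitrary} companion multicurve $\mu$, and there your proof has a genuine gap in two places. First, to invoke Proposition \ref{multicurve cone as surgery} with $\gamma = U \sqcup \mu$ you need the un-surgered curves $T_i$ and $T_{i+1}$ (full vertical lines running to $\infty$), and the surgered cap, to be disjoint from $\mu$; disjointness of $\mu$ from the double stab does not give this, since a component of $\mu$ may pass over the top of the outer cap (both ends at $y\to 0$, arcing above the cap) and then crosses $T_i$ and $T_{i+1}$ even though it misses $L$. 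Second, and more fundamentally, ``extend $f$, $g$, $h$, $s$ by the identity on the strands of $\mu$'' only makes sense when $M=L_\mu$ is itself a product of $T_\theta$ objects, so that the KLRW locality argument applies; for a general $\mu$, $L_\mu$ is only an iterated cone on such objects, there is no monoidal structure making ``$f\times \mathrm{id}_M$'' automatically an isomorphism, and your parenthetical disk-count alternative likewise needs to rule out disks interacting with the new intersection points created by $\mu$, which you do not do.

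The missing ingredient is precisely the reduction step in the paper's proof: first observe (as you do) that adding any number of $T_\theta$'s disjoint from $U\times U$ does not disturb the proofs of Proposition \ref{slide move} and Corollary \ref{double stab is zero}, so ``double stab plus vertical lines'' is a zero object; then, for an arbitrary companion $\mu$, run the stretching-and-breaking resolution of Corollary \ref{weak generation} on the $\mu$-components only (this never touches the double stab), expressing $L\times M$ as a twisted complex all of whose terms are of that special form. Since every term is zero, the total object is zero. Without some such reduction of the general companion to the vertical-line case, your proof does not establish the corollary in the generality in which it is stated (and used, e.g.\ in Proposition \ref{T2 res prop}).
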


\begin{proof}
    Denote the Lagrangian in Figure \ref{fig:double_stab} by $U \times U$. Consider a multicurve Lagrangian $\widetilde{\mathbb{U}}$ given by adding to $U \times U$ any number of $T_\theta$ which do not intersect $U \times U$. The presence of these additional $T$ do not affect the proofs of Proposition \ref{slide move} and Corollary \ref{double stab is zero}, so $\widetilde{\mathbb{U}}$ is still a zero object.
    Moreover, any Lagrangian formed as the product of $U \times U$ with another multicurve can resolved as a complex consisting entirely of subcomplexes given by $\widetilde{\mathbb{U}}$ objects, hence is also a zero object. 

    Alternatively, the same disks as in Corollary \ref{double stab is zero} exhibit a primitive of the identity morphism. 
\end{proof}

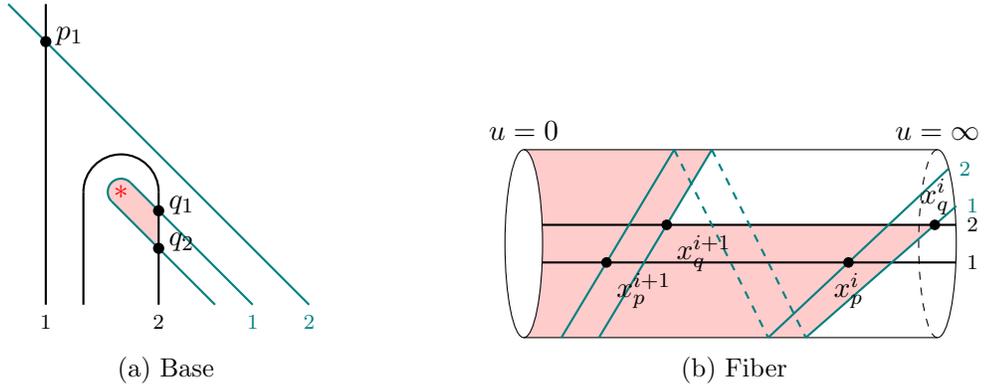
\begin{figure}
    \centering
    \begin{subfigure}[b]{0.45\textwidth}
        \centering
    \begin{tikzpicture}
        \node[red] at (0,0) {$*$};
        \draw[thick] (-0.5,-1.5) -- (-0.5,0);
        \draw[thick] (0.5,-1.5) node[below] {$\scriptstyle 2$} -- (0.5,0);
        \draw[thick] (-0.5,0) arc (180:0:0.5);
        \draw[thick, teal] (1.25,-1.5) -- (-.125,-.125);
        \draw[thick, teal] (1.75,-1.5) node[below] {$\scriptstyle 1$} -- (.125,.125);
        \draw[thick, teal] (-.125, -.125) arc (225:45:{sqrt(2)/8});
        \draw[thick] (-1,-1.5) node[below] {$\scriptstyle 1$} -- (-1,2.5);
        \draw[thick, teal] (2.5,-1.5) node[below] {$\scriptstyle 2$} -- (-1.5,2.5);
        \node[label={[right]$p_1$}] at (-1,2) [circle,fill,inner sep=1.5pt]{};
        \node[label={[right]$q_2$}] (q2) at (.5,-.75) [circle,fill,inner sep=1.5pt]{};
        \node[label={[right]$q_1$}] (q1) at (.5,-.25) [circle,fill,inner sep=1.5pt]{};
        \begin{scope}[on background layer] 
            \fill [red!20] (q1.center) -- (q2.center) -- (-.125, -.125) arc (225:45:{sqrt(2)/8}) -- cycle;
        \end{scope}
    \end{tikzpicture} 
        \caption{Base}
        \label{fig:map:stabxT2->stabxT1 base}
    \end{subfigure}
    \begin{subfigure}[b]{0.45\textwidth}
        \centering
    \begin{tikzpicture}
		\draw (-5.5,2.5) -- (0,2.5);
		\draw (-5.5,0) -- (0,0);
		\draw (-5.5,1.25) ellipse (0.25 cm and 1.25 cm);
		\draw (0,0) arc[
				start angle=-90,
				end angle=90,
				x radius=0.25cm,
				y radius=1.25cm
				];
	    \draw[dashed] (0,2.5) arc[
				start angle=90,
				end angle=270,
				x radius=0.25cm,
				y radius=1.25cm
				];
		\draw[thick, name path=t1] (.25,1) node[right] {$\scriptstyle 1$} -- (-5.25,1);
		\draw[thick, name path=t2] (.25,1.5) node[right] {$\scriptstyle 2$} -- (-5.25,1.5);
        \draw[thick, teal, name path=w1] (0.25,1.75) node[right] {$\scriptstyle 1$} -- (-1.75,0);
        \draw[thick, teal, name path=w2] (0.15,2.25) node[right] {$\scriptstyle 2$} -- (-2.25,0);
        \draw[thick, teal, dashed] (-3,2.5) -- (-1.75,0);
        \draw[thick, teal, dashed] (-3.5,2.5) -- (-2.25,0);
        \draw[thick, teal, name path=w3] (-3,2.5) -- (-4.5,0);
        \draw[thick, teal, name path=w4] (-3.5,2.5) -- (-5,0);
        \fill[name intersections={of=t2 and w1, by=i1}] (intersection-1) circle (2pt) node[above] {$x_q^i$};
        \fill[name intersections={of=t1 and w2}] (intersection-1) circle (2pt) node[below] {$x_p^i$};
        \fill[name intersections={of=t2 and w3, by=i2}] (intersection-1) circle (2pt) node[below right] {$x_q^{i+1}$};
        \fill[name intersections={of=t1 and w4}] (intersection-1) circle (2pt) node[below right] {$x_p^{i+1}$};
        \node[above] at (0,2.5) {$u=\infty$};
        \node[above] at (-5.5,2.5) {$u=0$};
        \begin{scope}[on background layer] 
            \fill [red!20] (i2) -- (i1) -- (-1.75,0) -- (-5.5,0) arc (-90:90:.25cm and 1.25cm) -- (-3,2.5) -- cycle;
        \end{scope}
    \end{tikzpicture}
        \caption{Fiber}
        \label{fig:map:stabxT2->stabxT1 fiber}
    \end{subfigure}
    \caption{Cylindrical model presentation of the unique disk in this geometry.  Note that at the $u = \infty$ end, the Lagrangians stop wrapping after $i=0$.}
    \label{fig:map:stabxT2->stabxT1}
\end{figure}

\begin{figure}
    \centering
    \begin{subfigure}[b]{0.3\textwidth}
    \centering
     \begin{tikzpicture}
        \node[red] at (0,0) {$*$};
        \draw[thick] (-0.5,-1.5) -- (-0.5,0.5);
        \draw[thick] (0.5,-1.5) -- (0.5,0.5);
        \draw[thick] (-0.5, 0.5) arc (180:0:0.5);
        \draw[thick] (-1,-1.5) -- (-1,1);
        \draw[thick] (-0.75,-1.5) -- (-0.75,1);
        \draw[thick] (-1, 1) arc (180:0:0.125);
    \end{tikzpicture}
    \caption{Obviously a zero object}
    \label{fig:double_stab_slide}   
    \end{subfigure}
    \begin{subfigure}[b]{0.3\textwidth}
    \centering
     \begin{tikzpicture}
        \node[red] at (0,0) {$*$};
        \draw[thick] (-0.5,-1.5) -- (-0.5,0.5);
        \draw[thick] (0.5,-1.5) -- (0.5,0.5);
        \draw[thick] (-0.5, 0.5) arc (180:0:0.5);
        \draw[thick] (-1,-1.5) -- (-1,0.5);
        \draw[thick] (1,-1.5) -- (1,0.5);
        \draw[thick] (-1, 0.5) arc (180:0:1);
    \end{tikzpicture}
    \caption{Not obviously a zero object}
    \label{fig:double_stab}   
    \end{subfigure}
    \caption{Two zero objects}
\end{figure}
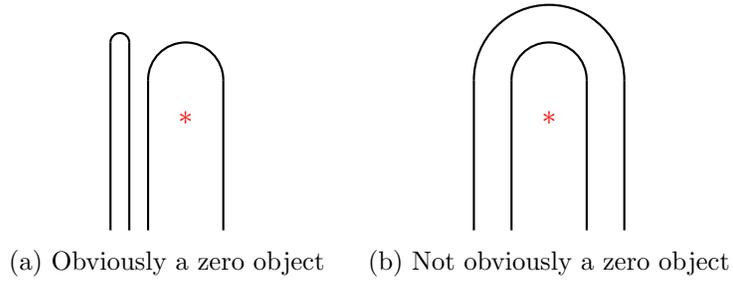

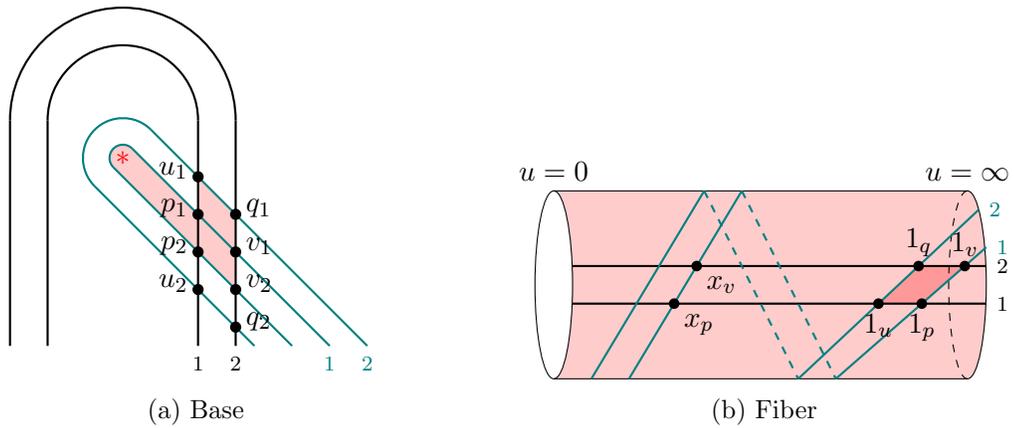
\begin{figure}
    \centering
    \begin{subfigure}[b]{0.45\textwidth}
        \centering
    \begin{tikzpicture}
        \node[red] at (0,0) {$*$};
        \draw[thick] (-1.5,-2.5) -- (-1.5,0.5);
        \draw[thick] (1.5,-2.5) node[below] {$\scriptstyle 2$} -- (1.5,0.5);
        \draw[thick] (-1.5, 0.5) arc (180:0:1.5);
        \draw[thick] (-1,-2.5) -- (-1,0.5);
        \draw[thick] (1,-2.5) node[below] {$\scriptstyle 1$} -- (1,0.5);
        \draw[thick] (-1, 0.5) arc (180:0:1);
        \draw[thick, teal] (2.25,-2.5) -- (-.125,-.125);
        \draw[thick, teal] (2.75,-2.5) node[below] {$\scriptstyle 1$} -- (.125,.125);
        \draw[thick, teal] (-.125, -.125) arc (225:45:{sqrt(2)/8});
        \draw[thick, teal] (1.75,-2.5) -- (-.375,-.375);
        \draw[thick, teal] (3.25,-2.5) node[below] {$\scriptstyle 2$} -- (.375,.375);
        \draw[thick, teal] (-.375, -.375) arc (225:45:{3*sqrt(2)/8});
        \node[label={[left]$u_1$}] (u1) at (1,-.25) [circle,fill,inner sep=1.5pt]{};
        \node[label={[left]$p_1$}] (p1) at (1,-.75) [circle,fill,inner sep=1.5pt]{};
        \node[label={[left]$p_2$}] at (1,-1.25) [circle,fill,inner sep=1.5pt]{};
        \node[label={[left]$u_2$}] at (1,-1.75) [circle,fill,inner sep=1.5pt]{};
        \node[label={[right]$q_1$}] (q1) at (1.5,-.75) [circle,fill,inner sep=1.5pt]{};
        \node[label={[right]$v_1$}] at (1.5,-1.25) [circle,fill,inner sep=1.5pt]{};
        \node[label={[right]$v_2$}] (v2) at (1.5,-1.75) [circle,fill,inner sep=1.5pt]{};
        \node[label={[right]$q_2$}] at (1.5,-2.25) [circle,fill,inner sep=1.5pt]{};
        \begin{scope}[on background layer] 
            \fill [red!20] (u1.center) -- (q1.center) -- (v2.center) -- (-.125, -.125) arc (225:45:{sqrt(2)/8}) -- (.125,.125) -- (p1.center) -- cycle;
        \end{scope}
    \end{tikzpicture}
    \caption{Base}
    \end{subfigure}
    \begin{subfigure}[b]{0.45\textwidth}
        \centering
    \begin{tikzpicture}
		\draw (-5.5,2.5) -- (0,2.5);
		\draw (-5.5,0) -- (0,0);
		\draw (-5.5,1.25) ellipse (0.25 cm and 1.25 cm);
		\draw (0,0) arc[
				start angle=-90,
				end angle=90,
				x radius=0.25cm,
				y radius=1.25cm
				];
	    \draw[dashed] (0,2.5) arc[
				start angle=90,
				end angle=270,
				x radius=0.25cm,
				y radius=1.25cm
				];
		\draw[thick, name path=t1] (.25,1) node[right] {$\scriptstyle 1$} -- (-5.25,1);
		\draw[thick, name path=t2] (.25,1.5) node[right] {$\scriptstyle 2$} -- (-5.25,1.5);
        \draw[thick, teal, name path=w1] (0.25,1.75) node[right] {$\scriptstyle 1$} -- (-1.75,0);
        \draw[thick, teal, name path=w2] (0.15,2.25) node[right] {$\scriptstyle 2$} -- (-2.25,0);
        \draw[thick, teal, dashed] (-3,2.5) -- (-1.75,0);
        \draw[thick, teal, dashed] (-3.5,2.5) -- (-2.25,0);
        \draw[thick, teal, name path=w3] (-3,2.5) -- (-4.5,0);
        \draw[thick, teal] (-3.5,2.5) -- (-5,0);
        \fill[name intersections={of=t1 and w1, by=p}] (intersection-1) circle (2pt) node[below] {$1_p$};
        \fill[name intersections={of=t2 and w1, by=v}] (intersection-1) circle (2pt) node[above] {$1_v$};
        \fill[name intersections={of=t1 and w2, by=u}] (intersection-1) circle (2pt) node[below] {$1_u$};
        \fill[name intersections={of=t2 and w2, by=q}] (intersection-1) circle (2pt) node[above] {$1_q$};
        \fill[name intersections={of=t1 and w3}] (intersection-1) circle (2pt) node[below right] {$x_p$};
        \fill[name intersections={of=t2 and w3}] (intersection-1) circle (2pt) node[below right] {$x_v$};
        \node[above] at (0,2.5) {$u=\infty$};
        \node[above] at (-5.5,2.5) {$u=0$};
        \begin{scope}[on background layer] 
            \fill [red!20] (-5.5,0) arc (-90:90:.25cm and 1.25cm) -- (0,2.5) arc (90:-90:.25cm and 1.25cm) -- cycle;
            \fill [red!40] (u.center) -- (q.center) -- (v.center) -- (p.center) -- cycle;
        \end{scope}
    \end{tikzpicture}
    \caption{Fiber}
    \label{fig:selfhomUxUfiber}
    \end{subfigure}
    \caption{The disk from $u_1 v_2 1_u 1_v$ to $p_1 q_1 1_p 1_q$}
    \label{fig:selfhomUxU}
\end{figure}

\section{Easy braiding object, revisited} \label{more lambda object}

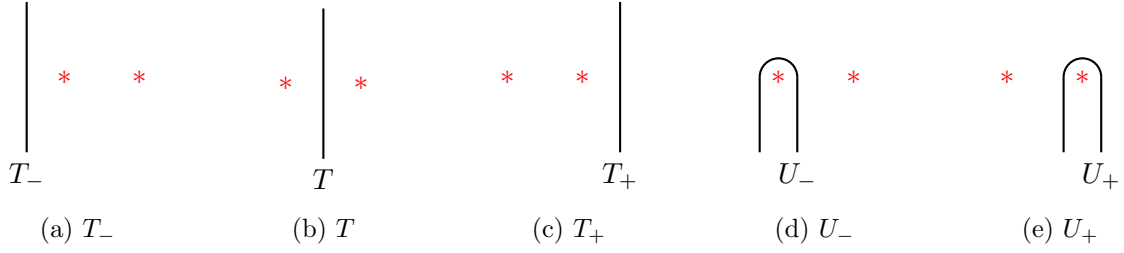
\begin{figure}
    \centering
    \begin{subfigure}[b]{0.19\textwidth}
        \centering
        \begin{tikzpicture}[scale=.5]
            \node[red] at (0,0) {$*$};
            \node[red] at (2,0) {$*$};
            \draw[thick] (-1,-2) node[below] {$T_-$} -- (-1,2);
        \end{tikzpicture}
        \caption{$T_-$}
    \end{subfigure}
    \begin{subfigure}[b]{0.19\textwidth}
        \centering
        \begin{tikzpicture}[scale=.5]
            \node[red] at (0,0) {$*$};
            \node[red] at (2,0) {$*$};
            \draw[thick] (1,-2) node[below] {$T$} -- (1,2);
        \end{tikzpicture}
        \caption{$T$}
    \end{subfigure}
    \begin{subfigure}[b]{0.19\textwidth}
        \centering
        \begin{tikzpicture}[scale=.5]
            \node[red] at (0,0) {$*$};
            \node[red] at (2,0) {$*$};
            \draw[thick] (3,-2) node[below] {$T_+$} -- (3,2);
        \end{tikzpicture}
        \caption{$T_+$}
    \end{subfigure}
    \begin{subfigure}[b]{0.19\textwidth}
        \centering
        \begin{tikzpicture}[scale=.5]
            \node[red] at (0,0) {$*$};
            \node[red] at (2,0) {$*$};
            \draw[thick] (-.5,-2) -- (-.5,0);
            \draw[thick] (.5,-2) node[below] {$U_-$} -- (.5,0);
            \draw[thick] (-.5,0) arc (180:0:.5);
        \end{tikzpicture}
        \caption{$U_-$}
    \end{subfigure}
    \begin{subfigure}[b]{0.19\textwidth}
        \centering
        \begin{tikzpicture}[scale=.5]
            \node[red] at (0,0) {$*$};
            \node[red] at (2,0) {$*$};
            \draw[thick] (1.5,-2) -- (1.5,0);
            \draw[thick] (2.5,-2) node[below] {$U_+$} -- (2.5,0);
            \draw[thick] (1.5,0) arc (180:0:.5);
        \end{tikzpicture}
        \caption{$U_+$}
    \end{subfigure}
    \caption{The curves $T_-$, $T$, $T_+$, $U_-$ and $U_+$}
    \label{fig:Ts and Us}
\end{figure}

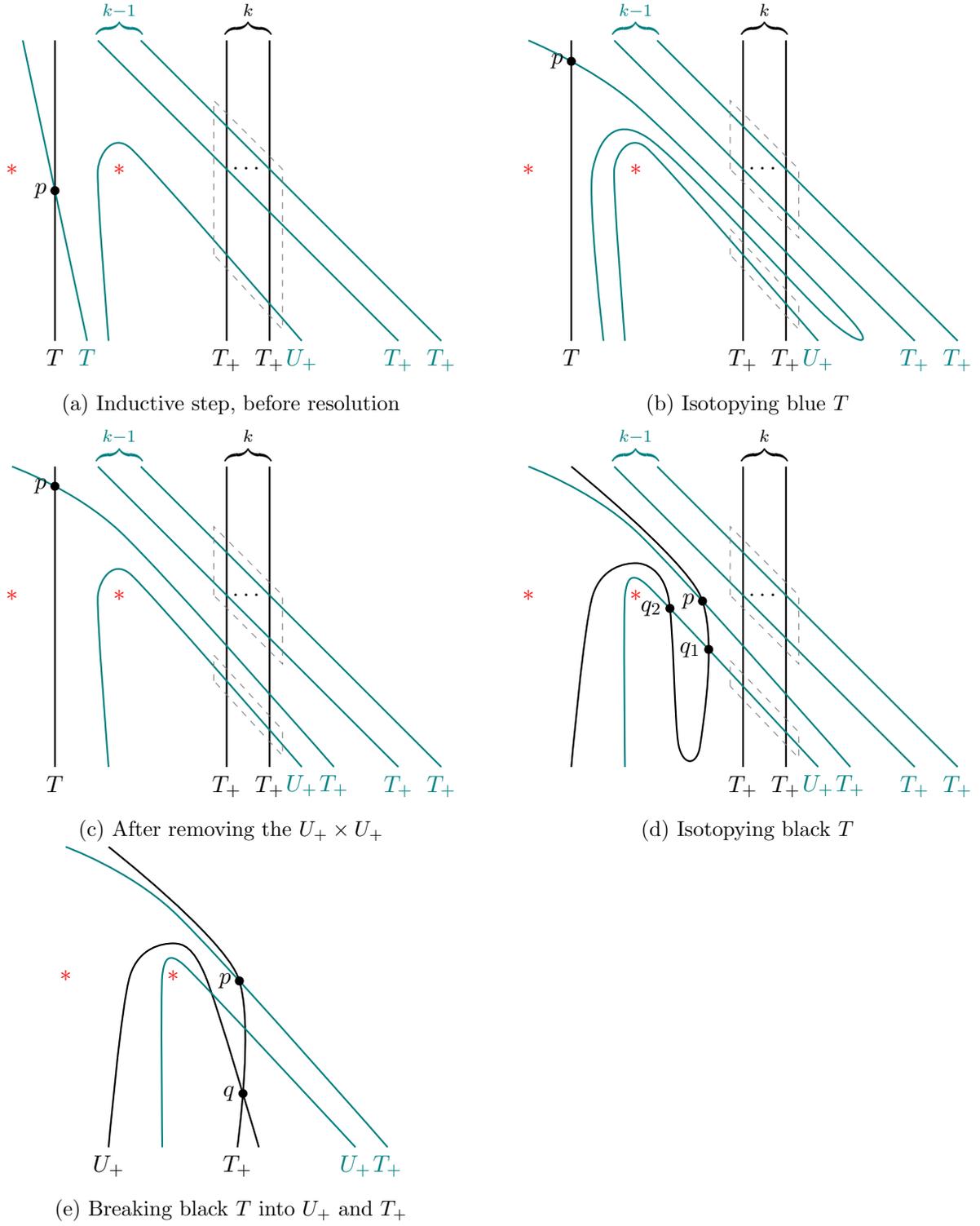
\begin{figure}
\begin{subfigure}[b]{0.49\textwidth}
    \centering
    \begin{tikzpicture}[scale=.7]
        \node[red] at (0,0) {$*$};
        \node[red] at (-2.5,0) {$*$};
        \draw[thick, name path=t31] (2.5,-4) node[below] {$T_+$} -- (2.5,3);
        \node at (3,0) {$\cdots$};
        \draw[thick, name path=t32] (3.5,-4) node[below] {$T_+$} -- (3.5,3);
        \draw[thick, name path=t2] (-1.5,-4) node[below] {$T$} -- (-1.5,3);
        \node[teal, below] at (4.25,-4) {$U_+$};
        \draw[thick, teal, name path = uw] plot [smooth, tension=0.3] coordinates {(-.25,-4) (-.5,0) (.25,.5) (4.25,-4)};
        \draw[thick, teal, name path=t31w] (6.5,-4) node[below] {$T_+$} -- (-.5,3);
        \draw[thick, teal, name path=t32w] (7.5,-4) node[below] {$T_+$} -- (.5,3);
        \draw[thick, teal, name path=t2w] (-.75,-4) node[below] {$T$} -- (-2.25,3);
        \node at (3,3.5) {$\overbrace{\hspace{.75cm}}^k$};
        \node[teal] at (0,3.5) {$\overbrace{\hspace{.75cm}}^{k-1}$};
        \fill[name intersections={of=t2 and t2w}] (intersection-1) circle (3pt) node[left] {$p$};
        \draw[dashed, gray,
            name intersections={of=t31 and uw, by=i1},
            name intersections={of=t31 and t32w, by=i2},
            name intersections={of=t32 and t32w, by=i3},
            name intersections={of=t32 and uw, by=i4}] ([shift={(-.3cm,0cm)}]i1) -- ([shift={(-.3cm,.6cm)}]i2) -- ([shift={(.3cm,0cm)}]i3) -- ([shift={(.3cm,-.6cm)}]i4) -- cycle;
    \end{tikzpicture}  
    \caption{Inductive step, before resolution}
    \label{fig:addingT2}
\end{subfigure}
\begin{subfigure}[b]{0.49\textwidth}
    \centering
    \begin{tikzpicture}[scale=.7]
        \node[red] at (0,0) {$*$};
        \node[red] at (-2.5,0) {$*$};
        \draw[thick, name path=t31] (2.5,-4) node[below] {$T_+$} -- (2.5,3);
        \node at (3,0) {$\cdots$};
        \draw[thick, name path=t32] (3.5,-4) node[below] {$T_+$} -- (3.5,3);
        \draw[thick, name path=t2] (-1.5,-4) node[below] {$T$} -- (-1.5,3);
        \node[teal, below] at (4.25,-4) {$U_+$};
        \draw[thick, teal, name path = uw] plot [smooth, tension=0.3] coordinates {(-.25,-4) (-.5,0) (.25,.5) (4.25,-4)};
        \draw[thick, teal, name path=t31w] (6.5,-4) node[below] {$T_+$} -- (-.5,3);
        \draw[thick, teal, name path=t32w] (7.5,-4) node[below] {$T_+$} -- (.5,3);
        \draw[thick, teal, name path=t2w] plot [smooth, tension=0.5] coordinates {(-.75,-4) (-1,0) (.25,.75) (4.5,-3.5) (5,-3.5) (0,1.5) (-2.5,3)};
        \fill[name intersections={of=t2 and t2w}] (intersection-1) circle (3pt) node[left] {$p$};
        \node at (3,3.5) {$\overbrace{\hspace{.75cm}}^k$};
        \node[teal] at (0,3.5) {$\overbrace{\hspace{.75cm}}^{k-1}$};
        \draw[dashed, gray,
            name intersections={of=t31 and t31w, by=i1},
            name intersections={of=t31 and t32w, by=i2},
            name intersections={of=t32 and t32w, by=i3},
            name intersections={of=t32 and t31w, by=i4}] ([shift={(-.3cm,0cm)}]i1) -- ([shift={(-.3cm,.6cm)}]i2) -- ([shift={(.3cm,0cm)}]i3) -- ([shift={(.3cm,-.6cm)}]i4) -- cycle;
            \draw[dashed, gray,
            name intersections={of=t31 and uw, by=i5},
            name intersections={of=t32 and uw, by=i6}] ([shift={(-.3cm,0cm)}]i5) -- ([shift={(-.3cm,.6cm)}]i5) -- ([shift={(.3cm,0cm)}]i6) -- ([shift={(.3cm,-.6cm)}]i6) -- cycle;
    \end{tikzpicture}  
    \caption{Isotopying blue $T$}
    \label{fig:isotopyT2blue}
\end{subfigure}
\begin{subfigure}[b]{0.49\textwidth}
    \centering
    \begin{tikzpicture}[scale=.7]
        \node[red] at (0,0) {$*$};
        \node[red] at (-2.5,0) {$*$};
        \draw[thick, name path=t31] (2.5,-4) node[below] {$T_+$} -- (2.5,3);
        \node at (3,0) {$\cdots$};
        \draw[thick, name path=t32] (3.5,-4) node[below] {$T_+$} -- (3.5,3);
        \draw[thick, name path=t2] (-1.5,-4) node[below] {$T$} -- (-1.5,3);
        \node[teal, below] at (4.25,-4) {$U_+$};
        \draw[thick, teal, name path = uw] plot [smooth, tension=0.3] coordinates {(-.25,-4) (-.5,0) (.25,.5) (4.25,-4)};
        \draw[thick, teal, name path=t31w] (6.5,-4) node[below] {$T_+$} -- (-.5,3);
        \draw[thick, teal, name path=t32w] (7.5,-4) node[below] {$T_+$} -- (.5,3);
        \node[teal, below] at (5,-4) {$T_+$};
        \draw[thick, teal, name path=t2w] plot [smooth, tension=0.5] coordinates {(5,-4) (0,1.5) (-2.5,3)};
        \fill[name intersections={of=t2 and t2w}] (intersection-1) circle (3pt) node[left] {$p$};
        \node at (3,3.5) {$\overbrace{\hspace{.75cm}}^k$};
        \node[teal] at (0,3.5) {$\overbrace{\hspace{.75cm}}^{k-1}$};
        \draw[dashed, gray,
            name intersections={of=t31 and t31w, by=i1},
            name intersections={of=t31 and t32w, by=i2},
            name intersections={of=t32 and t32w, by=i3},
            name intersections={of=t32 and t31w, by=i4}] ([shift={(-.3cm,0cm)}]i1) -- ([shift={(-.3cm,.6cm)}]i2) -- ([shift={(.3cm,0cm)}]i3) -- ([shift={(.3cm,-.6cm)}]i4) -- cycle;
            \draw[dashed, gray,
            name intersections={of=t31 and uw, by=i5},
            name intersections={of=t32 and uw, by=i6}] ([shift={(-.3cm,0cm)}]i5) -- ([shift={(-.3cm,.6cm)}]i5) -- ([shift={(.3cm,0cm)}]i6) -- ([shift={(.3cm,-.6cm)}]i6) -- cycle;
    \end{tikzpicture}  
    \caption{After removing the $U_+ \times U_+$}
    \label{fig:removingblueUxU}
\end{subfigure}
\begin{subfigure}[b]{0.49\textwidth}
    \centering
    \begin{tikzpicture}[scale=.7]
        \node[red] at (0,0) {$*$};
        \node[red] at (-2.5,0) {$*$};
        \draw[thick, name path=t31] (2.5,-4) node[below] {$T_+$} -- (2.5,3);
        \node at (3,0) {$\cdots$};
        \draw[thick, name path=t32] (3.5,-4) node[below] {$T_+$} -- (3.5,3);
        \draw[thick, name path=t2] plot [smooth, tension=.5] coordinates {(-1.5,-4) (-1,0) (0,.75) (.75,0) (1,-3.5) (1.5, -3.5) (1.5,0) (-1.5,3)};
        \node[teal, below] at (4.25,-4) {$U_+$};
        \draw[thick, teal, name path = uw] plot [smooth, tension=0.3] coordinates {(-.25,-4) (-.25,0) (.25,.25) (4.25,-4)};
        \draw[thick, teal, name path=t31w] (6.5,-4) node[below] {$T_+$} -- (-.5,3);
        \draw[thick, teal, name path=t32w] (7.5,-4) node[below] {$T_+$} -- (.5,3);
        \node[teal, below] at (5,-4) {$T_+$};
        \draw[thick, teal, name path=t2w] plot [smooth, tension=0.5] coordinates {(5,-4) (0,1.5) (-2.5,3)};
        \fill[name intersections={of=t2 and t2w}] (intersection-1) circle (3pt) node[left] {$p$};
        \fill[name intersections={of=t2 and uw}] (intersection-1) circle (3pt) node[left] {$q_2$};
        \fill[name intersections={of=t2 and uw}] (intersection-2) circle (3pt) node[left] {$q_1$};
        \node at (3,3.5) {$\overbrace{\hspace{.75cm}}^k$};
        \node[teal] at (0,3.5) {$\overbrace{\hspace{.75cm}}^{k-1}$};
        \draw[dashed, gray,
            name intersections={of=t31 and t31w, by=i1},
            name intersections={of=t31 and t32w, by=i2},
            name intersections={of=t32 and t32w, by=i3},
            name intersections={of=t32 and t31w, by=i4}] ([shift={(-.3cm,0cm)}]i1) -- ([shift={(-.3cm,.6cm)}]i2) -- ([shift={(.3cm,0cm)}]i3) -- ([shift={(.3cm,-.6cm)}]i4) -- cycle;
            \draw[dashed, gray,
            name intersections={of=t31 and uw, by=i5},
            name intersections={of=t32 and uw, by=i6}] ([shift={(-.3cm,0cm)}]i5) -- ([shift={(-.3cm,.6cm)}]i5) -- ([shift={(.3cm,0cm)}]i6) -- ([shift={(.3cm,-.6cm)}]i6) -- cycle;
    \end{tikzpicture}  
    \caption{Isotopying black $T$}
    \label{fig:isotopyT2black}
\end{subfigure}
\begin{subfigure}[b]{0.49\textwidth}
    \centering
    \begin{tikzpicture}[scale=.7]
        \node[red] at (0,0) {$*$};
        \node[red] at (-2.5,0) {$*$};
        \draw[thick, name path=u2] plot [smooth, tension=.5] coordinates {(-1.5,-4) (-1,0) (0,.75) (.75,0) (2, -4)};
        \node[below] at (-1.5,-4) {$U_+$};
        \draw[thick, name path=t2] plot [smooth, tension=.5] coordinates {(1.5, -4) (1.5,0) (-1.5,3)};
        \node[below] at (1.5,-4) {$T_+$};
        \node[teal, below] at (4.25,-4) {$U_+$};
        \draw[thick, teal] plot [smooth, tension=0.3] coordinates {(-.25,-4) (-.25,0) (.25,.25) (4.25,-4)};
        \node[teal, below] at (5,-4) {$T_+$};
        \draw[thick, teal, name path=t2w] plot [smooth, tension=0.5] coordinates {(5,-4) (0,1.5) (-2.5,3)};
        \fill[name intersections={of=t2 and t2w}] (intersection-1) circle (3pt) node[left] {$p$};
        \fill[name intersections={of=t2 and u2}] (intersection-1) circle (3pt) node[left] {$q$};
    \end{tikzpicture}  
    \caption{Breaking black $T$ into $U_+$ and $T_+$}
    \label{fig:breakT2black}
\end{subfigure}
\caption{Resolving the $T$ branes in equation \eqref{eqn:extra_T2}}
\label{fig:T2res}
\end{figure}

\begin{proposition} \label{T2 res prop}
    The object $\Lambda_n$ defined in equation \eqref{eqn:T2_res} is in fact isomorphic to $\theta^n$. 
\end{proposition}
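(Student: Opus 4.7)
The plan is to prove $\Lambda_n\cong\theta^n$ geometrically, using the surgery-at-infinity exact triangle (Proposition \ref{multicurve cone as surgery}) together with the vanishing of nested cups (Corollary \ref{general zero objects}).

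First I would identify the geometric avatar of $\upsilon_+$. Apply Proposition \ref{multicurve cone as surgery} to the configurations given by $c=T_+$ and $d=T$, with the Reeb chord taken near infinity and passing over the right red puncture. This yields an exact triangle
$$T_+\to T\to U_+\xrightarrow{[1]},$$
so under the embedding of Theorem \ref{five authors theorem} the two-term complex $\upsilon_+=\{\theta_+\{-1\}\to\theta\}$ is represented, up to an overall shift, by the Lagrangian $U_+$, with internal differential realized by this Reeb chord.

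Next I would resolve $\theta^n$ in one shot. Applying the same surgery triangle simultaneously to each of the $n$ parallel $T$-strands produces an $n$-fold iterated cone with $2^n$ terms, each of the form $X_1\times\cdots\times X_n$ with $X_i\in\{T_+,U_+\}$ and every $U_+$ wrapping the same (right) red puncture. By Corollary \ref{general zero objects}, any term containing two or more $U_+$ factors is a zero object. This leaves exactly $n+1$ surviving summands: one copy of $T_+^n$ and $n$ copies of $U_+\times T_+^{n-1}$ (one per position). The induced differentials between the surviving summands are the Reeb chord morphisms coming from the original surgery triangles, and under the identification $U_+\sim\upsilon_+$ the resulting complex matches $\Lambda_n$.

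The same argument can be carried out inductively, as illustrated in Figure \ref{fig:T2res}: at each stage one adds a single $T$-strand, isotopes it past the existing configuration, uses Corollary \ref{general zero objects} to cancel the $U_+\times U_+$ subconfigurations produced at that stage, and reads off the newly appearing summand of $\Lambda_n$. This inductive form is useful for keeping the bookkeeping manageable.

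The main obstacle is verifying that the differentials induced by the iterated surgery coincide on the nose with the specific $d_i$ of \eqref{eqn:T2_res_map}, rather than some sign or grading variant. This reduces to a morphism-uniqueness statement: by Lemma \ref{morphism uniqueness}, a morphism $\theta\times\theta_+^{n-1}\to\theta_+^n$ in the $(J,u,\hbar,C)$-multidegree of a single red-black crossing (with no dots or further crossings) is unique up to a scalar, so after fixing scalars the induced map must be $d_i$. Residual overall signs can then be pinned down either by a direct computation in the base case $n=1$, which is already the surgery triangle itself, or by the sign-rigidity afforded by Corollary \ref{cor:automorphisms}.
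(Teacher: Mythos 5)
Your geometric skeleton coincides with the paper's own proof: realize $\mathbb{A}(\upsilon_+)$ as the cup $U_+$ via the surgery triangle, resolve each $T$ into $\{U_+\to T_+\}$, and use Corollary \ref{general zero objects} to discard every term containing two nested cups, leaving the $n+1$ summands of $\Lambda_n$. The genuine gap is at the step you yourself flag as the main obstacle -- identifying the induced differentials with the $d_i$ of \eqref{eqn:T2_res_map} -- and the fix you propose does not close it. Lemma \ref{morphism uniqueness} is about the wrong morphisms: $d_i$ is \emph{not} a single red-black crossing from $\theta\times\theta_+^{n-1}$ to $\theta_+^n$; it is the chain map supported on the $\theta_+\{-1\}$-term of $\upsilon_+\times\theta_+^{n-1}$, given by a diagram with $i-1$ black-black crossings and no red-black crossing at all (the single red-black crossing occurs only as the internal differential of the $\upsilon_+$ factor). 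So the uniqueness statement you invoke does not constrain the maps you need to identify, and in any case a ``unique in this multidegree up to scalar'' statement can never rule out that an induced differential is zero, which is precisely what must be excluded.

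Moreover, the ``one-shot'' simultaneous resolution is not licensed by Proposition \ref{multicurve cone as surgery}, which replaces one component at a time and requires the intermediate multicurves to be admissible (disjoint); achieving this forces isotopies, and both these continuation maps and the Gaussian elimination of the zero-object terms can a priori correct the arrows between the surviving summands by compositions through contracting homotopies. The paper's proof consists almost entirely of controlling exactly this: it isotopes one curve at a time, uses the cylindrical model to check that the new intersection points created by each isotopy form an acyclic piece while the class of the relevant point $(p\cdots)$ is preserved (the disk count giving $\partial(q_2\cdots)=q_1\cdots$), and then reads off that $p$ acquires exactly one additional black-black crossing, turning $d_i$ into $d'_{i+1}$. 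Some argument at this level is required; without it you know the complex has the right terms but not the right differentials. (By contrast, the scalar and sign ambiguities you worry about at the end are harmless: every arrow of $\Lambda_n$ targets the single vertex $\theta_+^n$, so nonzero scalars can be absorbed by rescaling the source objects.)
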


\begin{proof}
    In principle, one could try and prove the statement directly with KLRW diagramatics.  Instead, we 
    will the embedding of Theorem \ref{five authors theorem} and argue geometrically using 
    the methods of Corollary \ref{weak generation} and the vanishing result of Corollary \ref{general zero objects}.  

    We will write $T_-, T, T_+$ for the curves corresponding to the $\theta_-, \theta, \theta_+$. 
    Trading the cones in the definition of $\upsilon_+$ and $\upsilon_-$    
    for geometric gluings, we see that their images are the $U_+$ and $U_-$ in Figure \ref{fig:Ts and Us}.
    
    The image of $\Lambda_n$ is then a complex of products of $U_+$ and $T_+$ Lagrangians. 
    We will show how to resolve the components of $T^n$ one at a time to get this complex.

    We have $T \cong \{U^+ \to T^+ \}$, and correspondingly 
    \begin{align}
        T^n &\cong
        \bigg\{ T^{n-1} \times T_+
        \xrightarrow{\begin{pmatrix} id_{T^{n-1}} \times \vcenter{\hbox{\begin{tikzpicture}[scale=.5]
	\begin{pgfonlayer}{nodelayer}
		\node [style=none] (0) at (-0.25, 0.5) {};
		\node [style=none] (1) at (0.25, 0.5) {};
		\node [style=none] (2) at (-0.25, -0.25) {};
		\node [style=none] (3) at (0.25, -0.25) {};
		\node [style=none] (4) at (0, -0.25) {};
		\node [style=none] (5) at (0, 0.5) {};
	\end{pgfonlayer}
	\begin{pgfonlayer}{edgelayer}
		\draw [style=su2 puncture strand] (0.center) to (2.center);
		\draw [style=brane] (3.center) to (1.center);
		\draw [style=su2 puncture strand, in=90, out=-90, looseness=1.25] (5.center) to (4.center);
	\end{pgfonlayer}
\end{tikzpicture}}} & id_{T^{n-1}} \times \vcenter{\hbox{\begin{tikzpicture}[scale=.5]
	\begin{pgfonlayer}{nodelayer}
		\node [style=none] (0) at (-0.25, 0.5) {};
		\node [style=none] (1) at (0.25, 0.5) {};
		\node [style=none] (2) at (-0.25, -0.25) {};
		\node [style=none] (3) at (0.25, -0.25) {};
		\node [style=none] (4) at (0, -0.25) {};
		\node [style=none] (5) at (0.5, 0.5) {};
	\end{pgfonlayer}
	\begin{pgfonlayer}{edgelayer}
		\draw [style=su2 puncture strand] (0.center) to (2.center);
		\draw [style=su2 puncture strand] (3.center) to (1.center);
		\draw [style=brane, in=90, out=-90, looseness=1.25] (5.center) to (4.center);
	\end{pgfonlayer}
\end{tikzpicture}}} \end{pmatrix}}
        T^{n-1} \times T_+ \oplus  T^{n-1} \times T \bigg\} \\
        &\cong
        \bigg\{ T^{n-1} \times U_+ \xrightarrow{id_{T^{n-1}} \times \vcenter{\hbox{\begin{tikzpicture}[scale=.5]
	\begin{pgfonlayer}{nodelayer}
		\node [style=none] (0) at (-0.25, 0.5) {};
		\node [style=none] (1) at (0.25, 0.5) {};
		\node [style=none] (2) at (-0.25, -0.25) {};
		\node [style=none] (3) at (0.25, -0.25) {};
		\node [style=none] (4) at (0, -0.25) {};
		\node [style=none] (5) at (0, 0.5) {};
	\end{pgfonlayer}
	\begin{pgfonlayer}{edgelayer}
		\draw [style=su2 puncture strand] (0.center) to (2.center);
		\draw [style=brane] (3.center) to (1.center);
		\draw [style=su2 puncture strand, in=90, out=-90, looseness=1.25] (5.center) to (4.center);
	\end{pgfonlayer}
\end{tikzpicture}}}} T^{n-1} \times T_+ \bigg\}
    \end{align}

    We will argue inductively in $k$ that $T^n$ is isomorphic to the complex
    \begin{equation} \label{eqn:extra_T2}
        \left( T^{n-k} \times U_+ \times T_+^{k-1} \right)^{\oplus k} \xrightarrow{d} T^{n-k} \times T_+^{k}
    \end{equation}
    where $d_i$ is
    \begin{equation}
    \vcenter{\hbox{
    \begin{tikzpicture}[scale=0.5]
        \draw[style=su2 puncture strand] (-1,0) -- (-1,2);
        \draw[style=su2 puncture strand] (1,0) -- (1,2);
        \draw (2.5,0) to node[pos=0, below]{$\scriptstyle{i}$} (1.5,2);
        \draw (0.5,0) -- (0.5,2);
        \draw (-.5,0) -- (-.5,2);
        \node at (0,1) {$\scriptstyle\cdots$};
        \draw (2,0) -- (2,2);
        \node at (2.5,1) {$\scriptstyle\cdots$};
        \draw (3,0) -- (3,2);
        \node at (0,2.6) {$\overbrace{\hspace{.3cm}}^{n-k}$};
    \end{tikzpicture}}}
    \end{equation}
    We have already checked this holds for $k = 1$; suppose now we have established \eqref{eqn:extra_T2} for $k$ and wish to show it for $k + 1$. 

    Fix attention on the rightmost component of $T^{n-k}$.  We will attempt to resolve it into $U_+ \to T_+$, and keep track of what happens to each term in \eqref{eqn:extra_T2} and what happens to the diagrams in $d$.   
    The diagram $d_i$ corresponds to an intersection point $(id_{T^{n-k-1}}, p, c_i)$ with $id_{T^{n-k-1}} \in T^{n-k-1} \cap T^{n-k-1}$, $p \in T \cap T$ and $c_i \in (U_+ \times T_+^{n-1}) \cap T_+^n$.  
    
    The presence of the leftmost $T^{n-k-1}$ does not affect the remainder of the argument, so for expository and diagrammatic simplicity, we draw pictures for $n = k+1$. 
    The intersection points $(p,c_i)$ before resolving are shown in Figure \ref{fig:addingT2}.

    First we resolve the $T$ in $T \times U_+ \times T_+^{k-1}$, by  isotoping it toward the bottom of the page as shown in Figure \ref{fig:isotopyT2blue}, and then breaking.  While this isotopy creates new intersection points between the multicurves, note that it does not create intersections of the corresponding Lagrangians: such an intersection is a tuple of intersections of the components of the multicurves, and the leftmost black $T$ intersects only one such curve, namely the moving blue $T_+$. Thus all intersection points between the Lagrangians use the original intersection point $p$, so the new intersections of the moving blue $T_+$ with the black curves do not actually give rise to intersections of Lagrangians. 
    The resulting Lagrangian is a cone over a map from $U_+^2 \times T_+^{k-1}$ to $U_+ \times T_+^k$, but $U_+^2 \times T_+^{k-1}$ is zero, so we can throw that term away, leaving only $U_+ \times T_+^k$, shown in Figure \ref{fig:removingblueUxU}. 
    
    Next, we isotope $T \times T_+^k$ as shown in Figure \ref{fig:isotopyT2black}. We would like to know that any intersection point of the Lagrangians of the form $(p \cdots)$ is sent ``to itself'' by the continuation map. However, as the isotopy creates new intersection points involving the black $T$ and the blue $U_+$,  this need not be the case a priori.  However, we will check that any intersection point involving $p$ remains closed, and, since these new intersection points form an acyclic complex, the class of $(p \cdots)$ in homology is the same as that of the image of $(p \cdots)$ under the continuation map. 
    
    We study disks using the cylindrical model recalled in Theorem \ref{sl2 cylindrical model}.  In the $\C^*_y$, there is a disk from $q_2$ to $q_1$, but no other disks.  Since this disk neither has branch points nor passes through $\mathbf{a}$, the corresponding fiber disk in $\C^*_u$ must be constant.  Thus in the cylindrical model, all relevant curves are simply disjoint copies of disks, where all maps, except the map to the aforementioned disk $q_2$ to $q_1$, are constant.  Thus $\partial (q_2 \cdots) = q_1 \cdots$.

    We write the resulting Lagrangian as a cone over a map $f: U_+ \times T_+^k \to T_+^{k+1}$ as in Figure \ref{fig:breakT2black}. This map is the identity on the $T_+^{k+1}$ component of $U_+ \times T_+^k$. 
    
    Since $p$ is on the $T_+^{k+1}$ piece of $\mathrm{cone}(f)$, the corresponding term $d_i$ in the differential from $U_+ \times T_+^k$ only maps to the $T_+^{k+1}$ term of $\mathrm{cone}(f)$.
    
    Before isotoping, the diagram corresponding to $c_i$ is
    \begin{equation}
    \vcenter{\hbox{
    \begin{tikzpicture}[scale=0.5]
        \draw[style=su2 puncture strand] (1,0) -- (1,2);
        \draw (2.5,0) to node[pos=0, below]{$\scriptstyle{i}$} (1.5,2);
        \draw[style=su2 puncture strand] (0.5,0) -- (0.5,2);
        \draw (2,0) -- (2,2);
        \node at (2.5,1) {$\scriptstyle\cdots$};
        \draw (3,0) -- (3,2);
    \end{tikzpicture}}}
    \end{equation}
    After isotoping, $p$ is an intersection point from the second leftmost $T_+$ (at the top of the diagram) to the leftmost $T_+$ (at the bottom of the diagram). We insert the corresponding strand into the diagram. The result is
    \begin{equation}
    \vcenter{\hbox{
    \begin{tikzpicture}[scale=0.5]
        \draw[style=su2 puncture strand] (0,0) -- (0,2);
        \draw (2.5,0) to node[pos=0, below]{$\scriptstyle{i+1}$} (1,2);
        \draw[style=su2 puncture strand] (0.5,0) -- (0.5,2);
        \draw (1.5,0) -- (1.5,2);
        \draw (2,0) -- (2,2);
        \node at (2.5,1) {$\scriptstyle\cdots$};
        \draw (3,0) -- (3,2);
    \end{tikzpicture}}}
    \end{equation}
    
    Finally restoring the 
     $id_{T^{n-k-1}}$, the diagram is
    \begin{equation}
    \vcenter{\hbox{
    \begin{tikzpicture}[scale=0.5]
        \draw[style=su2 puncture strand] (.5,0) -- (.5,2);
        \draw (2.5,0) to node[pos=0, below]{$\scriptstyle{i+1}$} (1,2);
        \draw[style=su2 puncture strand] (-1.5,0) -- (-1.5,2);
        \draw (0,0) -- (0,2);
        \draw (-1,0) -- (-1,2);
        \node at (-.5,1) {$\scriptstyle\cdots$};
        \draw (1.5,0) -- (1.5,2);
        \draw (2,0) -- (2,2);
        \node at (2.5,1) {$\scriptstyle\cdots$};
        \draw (3,0) -- (3,2);
        \node at (-.5,2.6) {$\overbrace{\hspace{.3cm}}^{n-k-1}$};
    \end{tikzpicture}}}
    \end{equation}
    Call this new map $d_{i+1}'$ and set $d_1' = f$. The resulting complex is
    \begin{equation}
        \left( T^{n-k-1} \times U_+ \times T_+^{k} \right)^{\oplus k+1} \xrightarrow{d'} T^{n-k-1} \times T_+^{k+1},
    \end{equation}
    completing the inductive step, and the proof.
\end{proof}

\section{Braiding} \label{sec: braiding}

The 
local constancy of Fukaya categories gives a monodromy action 
on $Fuk(\MCB(\Gamma, \vec{d}), \mathcal{W}_{\mathbf{a}})$ as 
$\mathbf{a}$ varies.  It is easy to see this action preserves  
$Fuk_{| | |}(\MCB(\Gamma, \vec{d}), \mathcal{W}_{\mathbf{a}})$.  Indeed, varying $\mathbf{a}$ means varying the `red points' in the annulus.  If we choose a diffeomorphism of the annulus implementing said variation, we may push the multicurves forward along the diffeomorphism.  This gives the action on $Fuk_{| | |}(\MCB(\Gamma, \vec{d}), \mathcal{W}_{\mathbf{a}})$.  {\em Unusually, we will view the clockwise twists of the red points as the positive elements in the braid group. }

In the present section, we will determine this action explicitly when $\Gamma = \bullet$.  More precisely, fix a choice of $\mathbf{a}$ with distinct arguments, e.g. $n$ points on the unit circle; we have then per Theorem \ref{five authors theorem} and Corollary \ref{weak generation} the equivalence: 
$$\mathbb{A}: \mathcal{C}_{\bullet, d, \arg(\mathbf{a})} \xrightarrow{\sim}  Fuk_{| | |}(\MCB(\bullet, d), \mathcal{W}_{\mathbf{a}})$$

Fix attention on some pair amongst the entries of $\mathbf{a}$ with adjacent arguments; we will study the counterclockwise half-twist.  We will write $\mathbb{B}^W$ for Webster's braiding endofunctor on $\mathcal{C}_{\bullet, d, \arg(\mathbf{a})}$ for this half-twist, and $\mathbb{B}^A$ for the geometrically defined braiding endofunctor on $Fuk_{| | |}(\MCB(\bullet, d), \mathcal{W}_{\mathbf{a}})$. 

\begin{theorem}
    There is a natural isomorphism $\mathbb{A} \circ \mathbb{B}^W \cong \mathbb{B}^A \circ \mathbb{A}$.
\end{theorem}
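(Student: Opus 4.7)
The plan is to apply Corollary \ref{cor:automorphisms} to the composite endofunctor
$\Phi := (\mathbb{B}^W)^{-1} \circ \mathbb{A}^{-1} \circ \mathbb{B}^A \circ \mathbb{A}$
of $\mathcal{C}_{\bullet, d, \arg(\mathbf{a})}$: producing a natural isomorphism $\Phi \cong \mathrm{Id}$ (allowing the signs $\epsilon(\theta)$ of the corollary) is equivalent to the sought intertwining $\mathbb{A} \circ \mathbb{B}^W \cong \mathbb{B}^A \circ \mathbb{A}$.  We must construct graded-degree-zero object-level isomorphisms $\eta_\theta : \Phi(\theta) \cong \theta$ and then verify the two diagrammatic conditions of the corollary.

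To construct $\eta_\theta$ for an object $\theta$ carrying $k$ black strands between the braided pair $\pi$, use Proposition \ref{T2 res prop} to resolve the subobject spanned by those $k$ strands as $\theta^k \cong \Lambda_k$, built from copies of $\upsilon_+ \times \theta_+^{k-1}$ and $\theta_+^k$; the strands outside $\pi$ are carried along as an identity factor.  Webster's half-twist sends this to $\Lambda_k' \cong \mathbb{B}^W(\theta^k)$ by Proposition \ref{L_n braiding}.  On the geometric side, the image $\mathbb{A}(\Lambda_k)$ is a complex of multicurve Lagrangians built out of the cup $U_+$ and vertical line $T_+$; a direct analysis of the ambient half-twist---together with the slide move of Proposition \ref{slide move} and the vanishing of $U \times U$-type configurations from Corollary \ref{general zero objects}---shows that $\mathbb{B}^A$ sends each summand, and the differential, to the corresponding terms in $\mathbb{A}(\Lambda_k')$.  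Hence $\mathbb{B}^A(\mathbb{A}(\Lambda_k)) \cong \mathbb{A}(\Lambda_k') \cong \mathbb{A}(\mathbb{B}^W(\theta^k))$, which yields $\eta_\theta$.  Grading preservation follows automatically from Theorem \ref{five authors theorem} together with the respective grading compatibility of both braidings.  In the special case $k = 0$, both braidings act on $\theta$ through their trivial action on multicurves avoiding a neighborhood of $\pi$, and $\eta_\theta$ is the tautological identification.

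For condition (1) of Corollary \ref{cor:automorphisms}: a KLRW diagram with no black strands between $\pi$ can be realized geometrically by multicurves that avoid a small neighborhood of the arc joining the two red points of $\pi$; the geometric half-twist can be implemented by an ambient annulus isotopy supported in this neighborhood, so it acts trivially on the Lagrangians and on the Reeb chords/intersection points representing the morphism.  For condition (2): this is a single explicit cap-type element, for which one directly computes both $\mathbb{B}^A \circ \mathbb{A}$ and $\mathbb{A} \circ \mathbb{B}^W$ on the corresponding Lagrangian and morphism using the cylindrical model of Section \ref{sec: ADLSZ} and the disk counts from Sections \ref{zeroes}--\ref{more lambda object}, and checks they agree.

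The chief obstacle is Step 1, in particular tracking the geometric half-twist across a resolution whose terms involve cups $U_+$ and whose differential is expressed in terms of specific Reeb chords: one must carry these terms through an intermediate complex featuring $U_+ \times U_+$-type Lagrangians that are killed only by invoking the vanishing results of Section \ref{zeroes}, while simultaneously using Corollary \ref{cor:automorphisms} to suppress the otherwise intricate analysis of how braiding acts on morphisms.  Fortunately, all the geometric ingredients needed for this computation---the slide move, the zero-object lemmas, and the compatibility of $\mathbb{A}$ with dots and Reeb chords---were developed in the preceding sections precisely for this purpose.
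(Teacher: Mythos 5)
Your proposal is correct and follows essentially the same route as the paper: resolve $\theta^n$ as $\Lambda_n$ via Proposition \ref{T2 res prop}, identify $\mathbb{B}^W(\Lambda_n) \cong \Lambda_n'$ by Proposition \ref{L_n braiding}, observe that the geometric half-twist carries $\mathbb{A}(\Lambda_n)$ to $\mathbb{A}(\Lambda_n')$ term by term, and then invoke Corollary \ref{cor:automorphisms} for $\Phi = (\mathbb{B}^W)^{-1}\circ\mathbb{A}^{-1}\circ\mathbb{B}^A\circ\mathbb{A}$, checking conditions (1) and (2) by locality of the braiding diffeomorphism. The only cosmetic differences are that the slide move and zero-object lemmas are really used inside Proposition \ref{T2 res prop} rather than in transporting the resolution across the half-twist (which in the paper is immediate from the ambient isotopy), and the paper verifies condition (2) by a separation-of-supports argument rather than a disk count.
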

\begin{proof}
    
It will suffice to construct the natural transformation on generators of the category; we will use the $\theta$ elements.  
So for any $\theta$, we should construct an isomorphism
\begin{equation} \label{desired intertwining iso}
    \nu_\theta: \mathbb{B}^A \circ \mathbb{A}(\theta) \cong \mathbb{A} \circ \mathbb{B}^W(\theta)
\end{equation}
and, for any $f: \theta \to \theta'$, \begin{equation} \label{desired intertwining check}
 (\mathbb{A} \circ \mathbb{B}^W)(f) \circ \nu_\theta= 
\nu_{\theta'} \circ (\mathbb{B}^A \circ \mathbb{A})(f)  
 \in \Hom(\mathbb{B}^A \circ \mathbb{A}(\theta), \mathbb{A} \circ \mathbb{B}^W(\theta'))
\end{equation}
Note that given \eqref{desired intertwining iso}, the Hom space above is isomorphic to $\Hom(\theta, \theta')$, which is (an explicit module of diagrams) concentrated in degree zero.  In particular, the aforementioned equality is a property and not a structure.  

Let us discuss objects. 
The essential case is that of  $n$ strands between the two chosen points, i.e. the object we have called $\theta^n$ previously.  By Proposition \ref{T2 res prop}, $\theta^n \cong \Lambda_n$.  Figure \ref{fig:geometric braiding} gives a canonical identification $\mathbb{B}^A(U_+ \times T_+^{n-1}) = U_- \times T_+^{n-1}$.  The maps originally in the resolution of $T^n$ are transformed as follows:
$$\bigg( \vcenter{\hbox{\begin{tikzpicture}[scale=.5]
	\begin{pgfonlayer}{nodelayer}
		\node [style=none] (0) at (-0.25, 0.5) {};
		\node [style=none] (1) at (0.25, 0.5) {};
		\node [style=none] (2) at (-0.25, -0.25) {};
		\node [style=none] (3) at (0.25, -0.25) {};
		\node [style=none] (4) at (0, -0.25) {};
		\node [style=none] (5) at (0, 0.5) {};
	\end{pgfonlayer}
	\begin{pgfonlayer}{edgelayer}
		\draw [style=su2 puncture strand] (0.center) to (2.center);
		\draw [style=brane] (3.center) to (1.center);
		\draw [style=su2 puncture strand, in=90, out=-90, looseness=1.25] (5.center) to (4.center);
	\end{pgfonlayer}
\end{tikzpicture}}} \times id_{T_+^{n-1}}:U_+ \times T_+^{n-1} \to T_+^n \bigg) \mapsto \bigg( \vcenter{\hbox{\begin{tikzpicture}[scale=.5]
	\begin{pgfonlayer}{nodelayer}
		\node [style=none] (0) at (-0.25, 0.5) {};
		\node [style=none] (1) at (0.25, 0.5) {};
		\node [style=none] (2) at (-0.25, -0.25) {};
		\node [style=none] (3) at (0.25, -0.25) {};
		\node [style=none] (4) at (0.5, -0.25) {};
		\node [style=none] (5) at (0, 0.5) {};
	\end{pgfonlayer}
	\begin{pgfonlayer}{edgelayer}
		\draw [style=su2 puncture strand] (0.center) to (2.center);
		\draw [style=su2 puncture strand] (3.center) to (1.center);
		\draw [style=brane, in=90, out=-90, looseness=1.25] (5.center) to (4.center);
	\end{pgfonlayer}
\end{tikzpicture}}}\times id_{T_+^{n-1}}:U_- \times T_+^{n-1} \to T_+^n \bigg).$$ 
The resulting resolution is obviously 
$\mathbb{A}(\Lambda_n')$, hence 
we have given an isomorphism $\mathbb{B}^A \circ \mathbb{A}(\Lambda_n) \cong \mathbb{A}(\Lambda_n')$, which we may compose with the identification by Proposition \ref{L_n braiding} of $\Lambda_n' \cong \mathbb{B}^W (\Lambda_n)$. 
This is our $\nu_{\theta^n}$. 

For any other object, the same argument works: all strands not between the two points being braided just come along for the ride.  

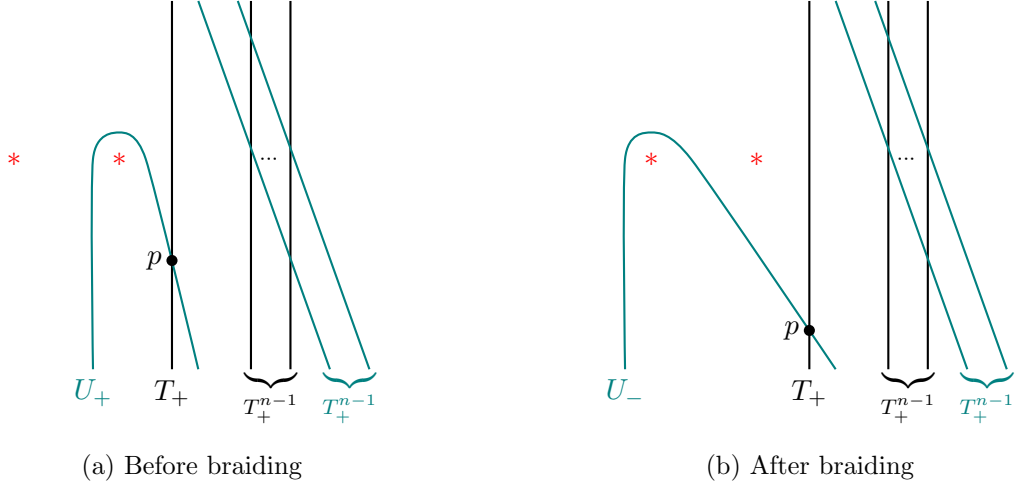
\begin{figure}
    \centering
    \begin{subfigure}[b]{0.49\textwidth}
    \centering
    \begin{tikzpicture}[scale=.7]
        \node[red] at (0,0) {$*$};
        \node[red] at (-2,0) {$*$};
        \draw[thick, name path=t3] (1,-4) node[below] {$T_+$} -- (1,3);
        \draw[thick] (2.5,-4) -- (2.5,3);
        \node at (2.875,0) {$\scriptstyle\cdots$};
        \draw[thick] (3.25,-4) -- (3.25,3);
        \node[teal, below] at (-.5,-4) {$U_+$};
        \draw[thick, teal, name path=u2] plot [smooth, tension=0.4] coordinates {(-.5,-4) (-.5,0) (0,.5) (.5,0) (1.5,-4)};
        \draw[thick, teal] (4,-4) -- (1.5,3);
        \draw[thick, teal] (4.75,-4) -- (2.25,3);
        \node at (2.875,-4.55) {$\underbrace{\hspace{.5cm}}_{T_+^{n-1}}$};
        \node[teal] at (4.375,-4.55) {$\underbrace{\hspace{.5cm}}_{T_+^{n-1}}$};
        \fill[name intersections={of=t3 and u2}] (intersection-1) circle (3pt) node[left] {$p$};
    \end{tikzpicture}
    \caption{Before braiding}
    \end{subfigure}
    \begin{subfigure}[b]{0.49\textwidth}
    \centering
    \begin{tikzpicture}[scale=.7]
        \node[red] at (0,0) {$*$};
        \node[red] at (-2,0) {$*$};
        \draw[thick, name path=t3] (1,-4) node[below] {$T_+$} -- (1,3);
        \draw[thick] (2.5,-4) -- (2.5,3);
        \node at (2.875,0) {$\scriptstyle\cdots$};
        \draw[thick] (3.25,-4) -- (3.25,3);
        \node[teal, below] at (-2.5,-4) {$U_-$};
        \draw[thick, teal, name path=u2] plot [smooth, tension=0.4] coordinates {(-2.5,-4) (-2.5,0) (-2,.5) (-1.25,0) (1.5,-4)};
        \draw[thick, teal] (4,-4) -- (1.5,3);
        \draw[thick, teal] (4.75,-4) -- (2.25,3);
        \node at (2.875,-4.55) {$\underbrace{\hspace{.5cm}}_{T_+^{n-1}}$};
        \node[teal] at (4.375,-4.55) {$\underbrace{\hspace{.5cm}}_{T_+^{n-1}}$};
        \fill[name intersections={of=t3 and u2}] (intersection-1) circle (3pt) node[left] {$p$};
    \end{tikzpicture}
    \caption{After braiding}
    \end{subfigure}
    \caption{Action of geometric braiding on a map from $U_+ \times T_+^{n-1}$ to $T_+^n$}
    \label{fig:geometric braiding}
\end{figure}

\vspace{2mm}
    We have now established that $\Phi:= (\mathbb{B}^W)^{-1} \circ  \mathbb{A}^{-1} \circ  \mathbb{B}^A \circ \mathbb{A}$ is an automorphism of the KLRW category preserving equivalence classes of objects; pulling back the above $\nu_\theta$ along $\mathbb{A} \circ \mathbb{B}^W$ gives isomorphisms $\Phi(\theta) \to \theta$.  Both $\mathbb{B}^W$ and $\mathbb{B}^A$ respect gradings, as does $\mathbb{A}$, hence so does $\Phi$.     
    It remains only to verify the hypotheses (1), (2) of Corollary \ref{cor:automorphisms} to check that, possibly after correcting by appropriate signs, $\eta_\theta$ determines a natural isomorphism. 
    But both $\mathbb{B}^W$ and $\mathbb{A}^{-1} \mathbb{B}^A \mathbb{A}$ independently (and obviously) restrict to functors naturally isomorphic to the identity on the subcategory on objects and morphisms not passing between the pair of red strands on which braiding is acting, hence verifying (1).  As for (2), we check explicitly that the given morphism is preserved independently by $\mathbb{B}^W$ and $\mathbb{B}^A$. 
    Indeed, for $\mathbb{B}^W$, the very definition of the braiding bimodule is that $\begin{tikzpicture}[scale=.5]
	\begin{pgfonlayer}{nodelayer}
		\node [style=none] (0) at (0.375, 0.5) {};
		\node [style=none] (1) at (0.125, 0.5) {};
		\node [style=none] (2) at (-0.375, -0.25) {};
		\node [style=none] (3) at (0.125, -0.25) {};
		\node [style=none] (4) at (-0.125, -0.25) {};
		\node [style=none] (5) at (-0.125, 0.5) {};
	\end{pgfonlayer}
	\begin{pgfonlayer}{edgelayer}
		\draw [style=su2 puncture strand] (3.center) to (1.center);
		\draw [style=su2 puncture strand] (5.center) to (4.center);
		\draw [style=brane, in=90, out=-90] (0.center) to (2.center);
	\end{pgfonlayer}
\end{tikzpicture}$  commutes with a crossing of the two red strands.  For $\mathbb{B}^A$, the parallel transport implementing braiding is can be taken to be the identity away from a region enclosing the two red points, and $\begin{tikzpicture}[scale=.5]
	\begin{pgfonlayer}{nodelayer}
		\node [style=none] (0) at (0.375, 0.5) {};
		\node [style=none] (1) at (0.125, 0.5) {};
		\node [style=none] (2) at (-0.375, -0.25) {};
		\node [style=none] (3) at (0.125, -0.25) {};
		\node [style=none] (4) at (-0.125, -0.25) {};
		\node [style=none] (5) at (-0.125, 0.5) {};
	\end{pgfonlayer}
	\begin{pgfonlayer}{edgelayer}
		\draw [style=su2 puncture strand] (3.center) to (1.center);
		\draw [style=su2 puncture strand] (5.center) to (4.center);
		\draw [style=brane, in=90, out=-90] (0.center) to (2.center);
	\end{pgfonlayer}
\end{tikzpicture}$ involves objects and a morphism far separated from such a region (see Figure \ref{fig:crossing two red strands}). 
\end{proof}

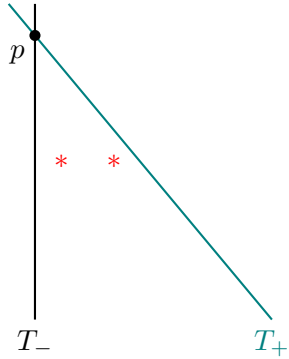
\begin{figure}
    \centering
    \begin{tikzpicture}[scale=.7]
        \node[red] at (-.5,0) {$*$};
        \node[red] at (.5,0) {$*$};
        \draw[thick, name path=t0] (-1,-3) node[below] {$T_-$} -- (-1,3);
        \draw[thick, teal, name path=t2] (3.5,-3) node[below] {$T_+$} -- (-1.5,3);
        \fill[name intersections={of=t0 and t2}] (intersection-1) circle (3pt) node[below left] {$p$};
    \end{tikzpicture}
    \caption{An intersection point invariant under braiding}
    \label{fig:crossing two red strands}
\end{figure}
 
Since counterclockwise half-twists and their inverses generate the braid group, this establishes the desired intertwining. 

\begin{remark}
Because it arises from the monodromy of a local system of categories over the configuration space of points on the plane, the braid group action from monodromy is obviously a `strong' braid group action -- i.e., in terms of generators and relations, the natural transformation ensuring compatibility with the braid relation $s_1 s_2 s_1 = s_2 s_1 s_2$ should carry certain coherences.  
Webster has also shown that his action is strong in this sense \cite{webster}.  

For a braid group action on (not infinity) categories, being a strong action is a property of said natural transformation, not additional structure; thus, to check that two strong braid group actions on categories intertwine is the same as checking that the underlying weak braid group actions (ignoring the aforementioned coherences) intertwine. 

However, the corresponding higher categorical notion involves further structures (which the monodromy action would automatically carry, and which Webster established in his  setting).  We have not checked that these further structures are intertwined.   

The upshot of all of this is that from the intertwining we have proven, it follows that the Webster and Aganagic formulas for Khovanov homology are canonically isomorphic, and that the formulas for the Khovanov chain complex are quasi-isomorphic by a quasi-isomorphism  which is canonical up to a homotopy, but we have not established that said homotopy is itself canonical up to higher homotopies.  Of course, we expect that it is.
\end{remark}

\section{Caps and cups} \label{sec: cupcap}

\subsection{Intertwining cups}

Let $\Pi$ be a choice of disjoint paths between disjoint pairs of elements of $\mathbf{a}$.  
We write $E_\Pi$ for the multi-curve Lagrangian given in the base by figure-eights near the paths around each pair, as in the $E$ of Figure \ref{fig:cups and caps}. 
Note that the resulting Lagrangian need not be immersed (one is free to wiggle the curve-in-$\C_u^*$ part at the moment of self-intersection on the base), and anyway even if one did take an immersed representative, the immersed point would be invisible to disks because the immersion is the image of an embedding on a cover, to which any disk must necessarily lift. 

Consider the situation where $\mathbf{a}$ consists of 2n points on the unit circle minus $1$.  We write $\Pi(n)$ for the pairing matching adjacent elements by the arc in the unit circle minus $1$. 

We did not officially allow such immersed curves for multi-curve Lagrangians in our general setup.  Nevertheless: 

\begin{lemma}
    A multicurve Lagrangian  with figure-eight components is isomorphic to an element of $Fuk_{| | |}(\MCB(\Gamma, \vec{d}), \mathcal{W}_{\mathbf{a}})$. 
\end{lemma}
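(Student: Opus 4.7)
The plan is to express each figure-eight component of $E_\Pi$ as the surgery-at-infinity of two embedded arcs, so that Proposition \ref{multicurve cone as surgery} presents it as a cone of admissible multicurve Lagrangians. Concretely, focus attention on a single figure-eight in $E_\Pi$ wrapping a pair $\pi = \{a_i, a_j\}$ of red points, and write $\gamma$ for the remaining (possibly not yet resolved) components. The figure-eight has a unique self-intersection point $p$ in the interior of $\C^*_y$. By a Hamiltonian isotopy in $\C^*_y$ supported near $p$ and disjoint from $\gamma$, I would push $p$ radially toward one end of the cylinder, say toward $0$. In the limit the figure-eight opens up into two embedded arcs $c$ and $d$, each a cap asymptotic near $0$ and each enclosing exactly one of the red points; they share a single short Reeb chord, whose surgery-at-infinity $c \# d$ reproduces the original figure-eight up to Hamiltonian isotopy.

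With this presentation, Proposition \ref{multicurve cone as surgery} yields the exact triangle
\[
    L_{c \sqcup \gamma} \to L_{d \sqcup \gamma} \to L_{(c \# d) \sqcup \gamma} \xrightarrow{[1]},
\]
so the figure-eight Lagrangian attached to $\pi$ is isomorphic to $\mathrm{Cone}(L_{c \sqcup \gamma} \to L_{d \sqcup \gamma})$. Both $L_{c \sqcup \gamma}$ and $L_{d \sqcup \gamma}$ are admissible multicurves with no closed components, hence objects of $Fuk_{| | |}(\MCB(\Gamma, \vec{d}), \mathcal{W}_{\mathbf{a}})$. Iterating the procedure once per pair in $\Pi$, resolving one figure-eight at a time while the others remain in the background (and are subsequently dealt with in the same way), expresses $E_\Pi$ as an iterated cone of objects of $Fuk_{| | |}$, giving the isomorphism claimed.

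The main obstacle is reconciling the base picture (the Hamiltonian isotopy in $\C^*_y$ pushing the self-intersection to $0$) with the total-space Lagrangian, whose base-immersion is resolved by wiggling in the $\C^*_u$ fiber as noted in the paragraph preceding the lemma. One must verify that the two embedded base arcs $c, d$ obtained as the limit of the isotopy can be dressed up by appropriate fiber data producing admissible Lagrangians in $\MCB(\Gamma, \vec{d})$ whose surgery at the Reeb chord recovers the original figure-eight object, fiber-wiggling and all. Via the cylindrical model of \cite{ADLSZ} this reduces to independent one-dimensional checks in $\C^*_y$ and $\C^*_u$, of the same kind already exploited in the proofs of Propositions \ref{slide move} and \ref{T2 res prop}. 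Alternatively, one could bypass the intermediate construction entirely by invoking the surgery-at-infinity exact triangle of \cite[Prop. 1.37]{GPS2} directly for the immersed figure-eight.
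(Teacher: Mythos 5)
Your overall strategy -- break the other components first, then resolve each figure-eight as a cone built from two downward caps, one around each red point of the pair -- is the same as the paper's. But the key step as you state it fails: you claim that the two caps $c$ and $d$ ``share a single short Reeb chord, whose surgery-at-infinity $c \# d$ reproduces the original figure-eight.'' It cannot. Surgery at infinity along one Reeb chord joins one end of $c$ to one end of $d$ and therefore reduces the total number of ends by exactly two: starting from two caps with four ends at the $0$-end of $\C^*_y$, the result is a single curve with two ends still running off to $0$, not the compact closed figure-eight. (More bluntly, no single surgery-at-infinity of two non-closed admissible curves can ever produce a closed component.) To reassemble the figure-eight you must glue at \emph{both} pairs of ends simultaneously, i.e.\ break the closed curve at two points at once, and Proposition \ref{multicurve cone as surgery} as stated does not cover this. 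Relatedly, a Hamiltonian isotopy ``supported near $p$'' cannot push the self-intersection point off to the end of the cylinder; what is really needed is a stretching/wrapping argument.

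This is exactly where the paper takes a slightly different route: it runs the argument of Corollary \ref{weak generation} with the vertical red lines through the points of $\mathbf{a}$, breaks all non-figure-eight components first, and then for each figure-eight stretches downward its two intersection points with the red lines lying \emph{below} the punctures. The complement of these two points in the figure-eight is disconnected into the two downward $U$ arcs of Figure \ref{fig:resolution of E}, and the action filtration argument (in the style of \cite[Prop.~1.37]{GPS2}) shows the figure-eight is a cone on \emph{some} morphism between the two $U$ objects times the remaining components -- crucially, one never needs to identify this morphism with a single Reeb chord, which sidesteps the obstruction above (note the two dashed chords in Figure \ref{fig:resolution of E}). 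Your closing sentence, suggesting one invoke \cite[Prop.~1.37]{GPS2} directly, does gesture at the correct tool, but as written your argument rests on the single-chord surgery claim, so the proof has a genuine gap; to repair it you should replace that claim by the two-point stretching and action-filtration step (and then your concern about the $\C^*_u$ fiber data largely evaporates, since the stretching happens in the base and the resulting pieces are honest embedded multicurves).
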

\begin{proof}
    We run the argument of Corollary \ref{weak generation}, including all the vertical lines through points in $\mathbf{a}$ among the `red lines' of that proof, and first breaking all non-figure-eight components.  Having done so, we are reduced to consider an object with figure-eights and vertical lines; it remains to remove the figure-eights.  For a figure-eight, it meets the red lines through its $\mathbf{a}$ points in four points -- two above, and two below.  Let us take the two below intersections and stretch them downward.  Their complement is disconnected, and the usual action filtration argument then implies that the figure-eight can be resolved as a cone on some morphism between two downward $U$ objects as in Figure \ref{fig:resolution of E} (`times' the remaining components of the multicurve).  
\end{proof}

\begin{figure}
    \centering
    \begin{tikzpicture}
        \node[red] at (0,0) {$*$};
        \node[red] at (2,0) {$*$};
        \draw[thick] plot [smooth, tension=0.6] coordinates {(-.5,-2) (-.5,0) (0,.5) (.5,0) (.5,-2)};
        \draw[thick, name path=l2] plot [smooth, tension=0.6] coordinates {(1.5,-2) (1.5,0) (2,.5) (2.5,0) (2.5,-2)};
        \draw[dashed, ->] (-.5,-1.8) -- (1.5,-1.8);
        \draw[dashed, ->] (.5,-2) -- (2.5,-2);
    \end{tikzpicture}
    \caption{A figure-eight breaking into two $U$ objects}
    \label{fig:resolution of E}
\end{figure}
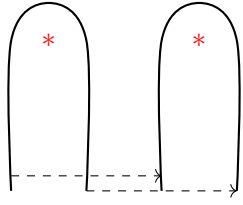

Let $\theta_{\Pi(n)}$ be the KLRW object given by $\vcenter{\hbox{\begin{tikzpicture}[scale=.5]
	\begin{pgfonlayer}{nodelayer}
		\node [style=none] (0) at (-0.825, 0.5) {};
		\node [style=none] (1) at (-1.075, 0.5) {};
		\node [style=none] (2) at (-0.825, -0.25) {};
		\node [style=none] (3) at (-1.075, -0.25) {};
		\node [style=none] (4) at (-0.575, -0.25) {};
		\node [style=none] (5) at (-0.575, 0.5) {};
		\node [style=none] (6) at (-0.075, 0.5) {};
		\node [style=none] (7) at (-0.325, 0.5) {};
		\node [style=none] (8) at (-0.075, -0.25) {};
		\node [style=none] (9) at (-0.325, -0.25) {};
		\node [style=none] (10) at (0.175, -0.25) {};
		\node [style=none] (11) at (0.175, 0.5) {};
		\node [style=none] (12) at (1, 0.5) {};
		\node [style=none] (13) at (0.75, 0.5) {};
		\node [style=none] (14) at (1, -0.25) {};
		\node [style=none] (15) at (0.75, -0.25) {};
		\node [style=none] (16) at (1.25, -0.25) {};
		\node [style=none] (17) at (1.25, 0.5) {};
		\node [style=none] (18) at (0.5, 0.125) {$\scriptstyle\cdots$};
	\end{pgfonlayer}
	\begin{pgfonlayer}{edgelayer}
		\draw [style=brane] (0.center) to (2.center);
		\draw [style=su2 puncture strand] (3.center) to (1.center);
		\draw [style=su2 puncture strand, in=90, out=-90, looseness=1.25] (5.center) to (4.center);
		\draw [style=brane] (6.center) to (8.center);
		\draw [style=su2 puncture strand] (9.center) to (7.center);
		\draw [style=su2 puncture strand, in=90, out=-90, looseness=1.25] (11.center) to (10.center);
		\draw [style=brane] (12.center) to (14.center);
		\draw [style=su2 puncture strand] (15.center) to (13.center);
		\draw [style=su2 puncture strand, in=90, out=-90, looseness=1.25] (17.center) to (16.center);
	\end{pgfonlayer}
\end{tikzpicture}}}$.  Then there is a simple KLRW module $S_{\Pi(n)}$ characterized by $\Hom(\theta_*, S_{\Pi(n)}) = 0$ unless $* = \Pi(n)$, and $\Hom(\theta_{\Pi(n)}, S_{\Pi(n)}) = \Z$, where said $\Z$ is in $u, \hbar, J$ gradings zero.  (This grading constraint forces all nontrivial elements of $\Hom(\theta_{\Pi(n)}, \theta_{\Pi(n)})$ to act trivially on 
$\Hom(\theta_{\Pi(n)}, S_{\Pi(n)})$, hence characterizing the module.) 

\begin{lemma}
     $\mathbb{A}(S_{\Pi(n)}) \cong E_{\Pi(n)}$
\end{lemma}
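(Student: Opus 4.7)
I would argue by Yoneda. Since $\mathbb{A}$ is fully faithful and the Lagrangians $\mathbb{A}(\theta_*) = T_{\theta_*}$ generate $Fuk_{|||}$, it suffices to verify that for every KLRW object $\theta_*$ the Floer-theoretic Hom space $\Hom_{Fuk}(T_{\theta_*}, E_{\Pi(n)})$ matches the characterization of $S_{\Pi(n)}$: it should vanish unless $\theta_* = \theta_{\Pi(n)}$, and otherwise be a free $\mathbb{Z}$ of rank one concentrated in tridegree $(u,\hbar,J) = (0,0,0)$. Granting this, Yoneda over a set of generators gives $\mathbb{A}(S_{\Pi(n)}) \cong E_{\Pi(n)}$.

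\textbf{Step 1 (vanishing).} Each figure-eight component of $E_{\Pi(n)}$ has compact image in the base $\mathbb{C}^*_y$ concentrated near its arc of $\Pi(n)$, with a single self-crossing at the midpoint of that arc. If $\theta_*$ has a black strand lying outside every $\Pi(n)$-arc, the corresponding vertical curve is disjoint from $E_{\Pi(n)}$ in the base and can be Hamiltonian-displaced, killing the Hom space. If instead $\theta_*$ has two black strands in a single $\Pi(n)$-arc, I would resolve the corresponding figure-eight via Figure \ref{fig:resolution of E} as a cone of two downward $U$ objects and then apply Corollary \ref{general zero objects} (in the form noting that a pair of black strands between two red points with a $U$ present produces a zero object); this again forces the Hom to vanish. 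Together these two cases cover every $\theta_* \neq \theta_{\Pi(n)}$, because $\theta_{\Pi(n)}$ is the unique configuration putting exactly one black strand into each $\Pi(n)$-arc.

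\textbf{Step 2 (matched case).} For $\theta_* = \theta_{\Pi(n)}$, the multicurve product structure factors the Hom space over the pairs in $\Pi(n)$, reducing to a single-pair statement: $\Hom_{Fuk}(T_\theta, E_\pi) \cong \mathbb{Z}$ in tridegree $0$. I would establish this not by a bare disk count but by running the same surgery scheme: resolve $E_\pi$ into the cone $\{U_A \to U_B\}$ of Figure \ref{fig:resolution of E}, then resolve each $U$ further as a cone $\{T \to T_{\pm}\}$ using Proposition \ref{multicurve cone as surgery}. This realizes $E_\pi$ as an iterated cone of $T$, $T_+$, $T_-$ whose shape coincides term-for-term with the image under $\mathbb{A}$ of Webster's resolution \eqref{eqn:simple resolution} of $S_\theta$. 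Gradings (via Theorem \ref{five authors theorem}) and Lemma \ref{morphism uniqueness} pin down the connecting maps up to scalar, and the slide isomorphism of Proposition \ref{slide move} together with Corollary \ref{general zero objects} rules out spurious cancellations, so the unique generator of $\Hom_{Fuk}(T_\theta, E_\pi)$ sits in the expected tridegree zero.

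\textbf{Main obstacle.} The technical heart is Step 2: matching the geometric cone tower obtained by iterated surgery of the figure-eight with the purely algebraic resolution \eqref{eqn:simple resolution} on the nose (or at least up to quasi-isomorphism). The subtle point is not the vanishing but identifying the differentials: a priori the geometric surgery produces extra continuation terms that one must show are either zero, acyclic, or absorbable into the $\mathbb{Z}[u,\hbar]$-linear structure via Corollary \ref{general zero objects}. I expect this to require, for each pair $\pi$, roughly the same bookkeeping that appeared in the proof of Proposition \ref{T2 res prop}, with the double-$U$ vanishing playing the crucial simplifying role. Once the single-pair identification is clean, the multi-pair case follows by carrying out the resolutions in disjoint annular sectors, using that multicurve products of zero objects remain zero.
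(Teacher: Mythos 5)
Your overall frame --- characterize $\mathbb{A}(S_{\Pi(n)})$ by its Hom spaces from the generating objects $T_{\theta_*}$ and check that $E_{\Pi(n)}$ has the same profile --- is exactly the strategy implicit in the paper, and your vanishing case (i) is the paper's argument (``there are simply no intersections''). But there are two genuine problems. First, your case (ii) justification is not correct: Corollary \ref{general zero objects} says that the multicurve $U\times U$ (two nested $U$'s around the \emph{same} red point) times anything is a zero object; it says nothing about ``a pair of black strands between two red points with a $U$ present,'' and even if one term of a resolution of the figure-eight were zero you would still have to analyze the cone, not just cite the vanishing of a piece. The fix is much simpler and makes case (ii) unnecessary: since $\theta_*$ has $n$ black strands and $\Pi(n)$ has $n$ arcs, any $\theta_*\neq\theta_{\Pi(n)}$ leaves some arc empty, so the corresponding figure-eight component of $E_{\Pi(n)}$ meets no component of $T_{\theta_*}$; there is then no complete matching of components and hence no Floer generators at all.

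Second, and more seriously, your Step 2 is a plan rather than a proof: the entire content of the matched case is the identification of the connecting differentials in the cone tower, which you explicitly leave open as the ``main obstacle.'' Lemma \ref{morphism uniqueness} concerns single red-black crossings between $T_\theta$-type objects and does not by itself pin down the degree-zero morphism $\upsilon_-\to\upsilon_+$ (equivalently $U_-\to U_+$) produced by breaking the figure-eight, nor rule out that this morphism is zero or a non-unit integer multiple; and it is not explained how Proposition \ref{slide move} or Corollary \ref{general zero objects} would ``rule out spurious cancellations.'' The paper avoids all of this with a short direct computation: for the matched object it counts disks in the cylindrical model, where per pair the generators are $p x^i, q x^i$, the unique base disk (passing through $\mathbf a$ once, forcing a single zero of $\Phi_u$) gives $\partial(qx^i)=\pm p x^{i+1}$, so the homology is $\Z$ in the expected degree, and the multi-pair case is a tensor product of these complexes. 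Your resolution-matching route is essentially how the paper later proves $I_\Pi\cong\mathbb{A}(S)$ (cone over the unique class in $H^0\Hom(\upsilon_-,\upsilon_+)$ of $C_1$-degree zero), so it could plausibly be completed by an analogous uniqueness argument applied to the figure-eight's connecting morphism --- but as written that key step is missing, so the proposal does not yet establish the lemma.
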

\begin{proof}
    It is geometrically obvious that $\Hom(T_*, E_{\Pi(n)}) = 0$ except for $T = T_{\Pi(n)} := \mathbb{A}(\theta_{\Pi(n)})$; indeed, otherwise there are simply no intersections.  It remains to check that $\Hom( T_{\Pi(n)}, E_{\Pi(n)}) \cong \Z$.

    Let us focus on a single component of $T_{\Pi(n)}$ and the component of $E_{\Pi(n)}$ that it intersects. Their intersection points are $px^i$ and $qx^i$ for $i \in \Z_{\geq 0}$, shown in Figure \ref{fig:T E intersections}. There is only one possible disk in the base, from $q$ to $p$, which passes through $\mathbf{a}$ once, so any possible corresponding maps $\Phi_u$ must have one zero. The only possibility for the image of $\Phi_u$ is shown in Figure \ref{fig:T E intersections}.  (The argument that there is a unique such disk is identical to that for the corresponding disk in Proposition \ref{slide move map} above.)  We find $\partial(qx^i) = \pm px^{i+1}$. Since each component of $T_{\Pi(n)}$ only intersects one component of $E_{\Pi(n)}$, we just take the tensor product of $n$ copies of the complex for a single component. The homology is a single intersection point, so $\Hom( T_{\Pi(n)}, E_{\Pi(n)}) \cong \Z$.
\end{proof}

Since what we called $\cup_A^n$ in Theorem \ref{thm: intertwining} was defined to be $E_{\Pi(n)}$ in \cite{aganagic-knot-2}, and what we called $\cup_W^n$ was defined to be $S_{\Pi(n)}$ in \cite{webster}, this establishes the  intertwining of cups asserted in Theorem \ref{thm: intertwining}, and completes its proof. 

\begin{figure}
    \centering
    \begin{subfigure}[b]{0.4\textwidth}
    \centering
    \begin{tikzpicture}
        \node[red] at (0,0) {$*$};
        \node[red] at (2,0) {$*$};
        \draw[thick, name path=e] plot [smooth cycle, tension=.5] coordinates {(0,.5) (2,-.5) (2.5,0) (2,.5) (0,-.5) (-.5,0)};
        \draw[thick, teal, name path=t] (1.5,-1.5) -- (1.5,1.5);
        \fill[name intersections={of=e and t, by=p}] (intersection-1) circle (2pt) node[above left] {$p$};
        \fill[name intersections={of=e and t, by=q}] (intersection-2) circle (2pt) node[below left] {$q$};
        \begin{scope}[on background layer]
            \clip (1.5,-1) rectangle (2.5,1);
            \fill[red!20] plot [smooth cycle, tension=.5] coordinates {(0,.5) (2,-.5) (2.5,0) (2,.5) (0,-.5) (-.5,0)};
        \end{scope}
    \end{tikzpicture}
    \caption{$\Hom(T,E)$, base}
    \end{subfigure}
    \begin{subfigure}[b]{0.4\textwidth}
    \centering
    \begin{tikzpicture}[scale=.7]
		\draw (-5.5,2.5) -- (0,2.5);
		\draw (-5.5,0) -- (0,0);
		\draw (-5.5,1.25) ellipse (0.25 cm and 1.25 cm);
		\draw (0,0) arc[
				start angle=-90,
				end angle=90,
				x radius=0.25cm,
				y radius=1.25cm
				];
	    \draw[dashed] (0,2.5) arc[
				start angle=90,
				end angle=270,
				x radius=0.25cm,
				y radius=1.25cm
				];
		\draw[thick, name path=t] (.25,1) -- (-5.25,1);
        \draw[thick, teal, name path=w2] (0.15,2.25) -- (-2.25,0);
        \draw[thick, teal, dashed] (-3.5,2.5) -- (-2.25,0);
        \draw[thick, teal, name path=w4] (-3.5,2.5) -- (-5,0);
        \node[above] at (0,2.5) {$\scriptstyle{u=\infty}$};
        \node[above] at (-5.5,2.5) {$\scriptstyle{u=0}$};
        \fill[name intersections={of=w2 and t, by=i1}] (intersection-1) circle (3pt) node[above] {$x^i$};
        \fill[name intersections={of=w4 and t, by=i2}] (intersection-1) circle (3pt) node[below right] {$x^{i+1}$};
        \begin{scope}[on background layer] 
            \fill [red!20] (i2) -- (i1) -- (-2.25,0) -- (-5.5,0) arc (-90:90:.25cm and 1.25cm) -- (-3.5,2.5) -- cycle;
        \end{scope}
    \end{tikzpicture}
    \caption{$\Hom(T,E)$, fiber}
    \end{subfigure}
    \caption{$\Hom(T,E)$}
    \label{fig:T E intersections}
\end{figure}

\subsection{Aganagic's caps}

In  fact, rather than pairing one object $\cup_A^n$ with its image under braiding, Aganagic's original proposal  \cite{aganagic-knot-2} paired  $\cup_A^n$ with the braiding of a differently described object $\cap_A^n$.  It has since been understood that $\cup_A^n \cong \cap_A^n$; here we describe these objects and sketch a proof. 

The object $\cap_A^n$ was described as follows.  Again pair up points by paths $\Pi$, and now consider the Lagrangian obtained by a variant of the multicurve prescription where we take these paths in the base, and now a different curve in the `fiber' $\C^*_u$ direction, as seen in the $I$  in Figure \ref{fig:cups and caps}. 
As described, this Lagrangian simply ends and has boundary over the points in $\mathbf{a}$; to correct this, one either should add over said points the part of the fiber `above' (closer to $u= \infty$) the curve and round corners, or alternatively, push the fiber curve towards $u = \infty$ as one approaches the $\mathbf{a}$.  (This is consistent with the wrapping prescription because, precisely over the points of $\mathbf{a}$, the fiberwise stop vanishes.)  We denote the resulting Lagrangian $I_\Pi$.\footnote{We will not check here that the resulting $I_\Pi$ can be made in a suitably conic manner to really be a bona fide element of the wrapped Fukaya category; this is the sense in which the following argument is a sketch.  Once one accepts this object and that moreover one can calculate with it using the cylindrical model, the following discussion is rigorous.} 

Here we will show that $I_\Pi \cong E_\Pi$.  For simplicity, we just discuss the case where $|\mathbf{a}| = 2$ and $d = 1$; the general case is just many disjoint copies of the same argument. 

\begin{figure}
    \centering
    \begin{subfigure}[b]{0.24\textwidth}
    \centering
    \begin{tikzpicture}
        \node[red] at (0,0) {$*$};
        \node[red] at (2,0) {$*$};
        \draw[thick] plot [smooth cycle, tension=.5] coordinates {(0,.5) (2,-.5) (2.5,0) (2,.5) (0,-.5) (-.5,0)};
    \end{tikzpicture}
    \caption{$E$, base}
    \end{subfigure}
    \begin{subfigure}[b]{0.24\textwidth}
    \centering
    \begin{tikzpicture}[scale=.5]
		\draw (-5.5,2.5) -- (0,2.5);
		\draw (-5.5,0) -- (0,0);
		\draw (-5.5,1.25) ellipse (0.25 cm and 1.25 cm);
		\draw (0,0) arc[
				start angle=-90,
				end angle=90,
				x radius=0.25cm,
				y radius=1.25cm
				];
	    \draw[dashed] (0,2.5) arc[
				start angle=90,
				end angle=270,
				x radius=0.25cm,
				y radius=1.25cm
				];
		\draw[thick] (.25,1) -- (-5.25,1);
        \node[above] at (0,2.5) {$\scriptstyle{u=\infty}$};
        \node[above] at (-5.5,2.5) {$\scriptstyle{u=0}$};
    \end{tikzpicture}
    \caption{$E$, fiber}
    \end{subfigure}
    \begin{subfigure}[b]{0.24\textwidth}
    \centering
    \begin{tikzpicture}
        \draw[thick] (0,0) -- (2,0);
        \node[red] at (0,0) {$*$};
        \node[red] at (2,0) {$*$};
    \end{tikzpicture}
    \caption{$I$, base}
    \end{subfigure}
    \begin{subfigure}[b]{0.24\textwidth}
    \centering
    \begin{tikzpicture}[scale=.5]
		\draw (-5.5,2.5) -- (0,2.5);
		\draw (-5.5,0) -- (0,0);
		\draw (-5.5,1.25) ellipse (0.25 cm and 1.25 cm);
		\draw (0,0) arc[
				start angle=-90,
				end angle=90,
				x radius=0.25cm,
				y radius=1.25cm
				];
	    \draw[dashed] (0,2.5) arc[
				start angle=90,
				end angle=270,
				x radius=0.25cm,
				y radius=1.25cm
				];
        \node[above] at (0,2.5) {$\scriptstyle{u=\infty}$};
        \node[above] at (-5.5,2.5) {$\scriptstyle{u=0}$};
        \draw[thick, in=-90, out=180] (.25,1.25) to (-3,2.5);
        \draw[thick, in=90, out=180] (.25,1) to (-3,0);
        \draw[thick, dashed] (-3,2.5) arc[
				start angle=90,
				end angle=270,
				x radius=0.25cm,
				y radius=1.25cm
				];
    \end{tikzpicture}
    \caption{$I$, fiber}
    \end{subfigure}
    \caption{$E$ and $I$}
    \label{fig:cups and caps}
\end{figure}

\begin{lemma} \label{U and I}
    The $U$ Lagrangian is isomorphic to the Lagrangian $J$ shown in Figure \ref{fig:stab}.
\end{lemma}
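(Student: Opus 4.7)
The plan is to exploit Proposition \ref{multicurve cone as surgery} to express both $U$ and $J$ as cones in $Fuk_{|||}$ and then identify the resulting complexes. The $U$ Lagrangian, being a curve that wraps around a red puncture from the $u=0$ end, should by construction already fit into a surgery-at-infinity triangle of the form $\{T_- \to T_+\}$ (or the analogous triangle on the other side of the puncture), where the connecting morphism is given by a specific Reeb chord in the fiber near $u=0$. The $J$ Lagrangian (using the fiber going to $u=\infty$ over the red puncture, analogous to the $I$ construction) should similarly fit into a surgery triangle of the form $\{T_- \to T_+\}$, but with the Reeb chord now living near $u = \infty$.

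First, I would apply Proposition \ref{multicurve cone as surgery} to $J$: isotope its base projection slightly to one side of the red puncture, expressing $J$ as the cone over a morphism between two $T$-type objects. The key geometric input is that, because the fiber component of $J$ wraps around through $u=\infty$ over the puncture, the Reeb chord realizing this cone is precisely the one detected at the $u=\infty$ stop. I would do the same for $U$ (whose structure as such a cone is essentially built into the multicurve picture) and then compare: both cones live in the same pair of $T$-type objects, and the Reeb chords at the $u=\infty$ end and the $u=0$ end give morphisms that should agree up to sign via Lemma \ref{morphism uniqueness} (the space of degree-one morphisms between these two $T$-objects is one-dimensional in the relevant graded piece).

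If the direct cone comparison proves awkward, the fallback is to mimic Proposition \ref{slide move}: use the cylindrical model (Theorem \ref{sl2 cylindrical model}) to enumerate intersection points and holomorphic disks between $U$ and $J$, exhibit the lowest-$J$-degree morphism $f: U \to J$ represented by a distinguished intersection point, and produce a candidate inverse $g$ together with explicit chain homotopies $h: U \to U[1]$ and $s: J \to J[1]$ witnessing $g\circ f \sim \mathrm{id}_U$ and $f \circ g \sim \mathrm{id}_J$. Uniqueness of the lowest-degree morphism follows from the $J$-grading formula \eqref{eqn:J deg combinatorial} exactly as in Proposition \ref{slide move map}.

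The main obstacle is the careful treatment of the fiber at $u=\infty$: because $J$'s fiber component escapes to the $u=\infty$ stop over the red puncture, holomorphic disks contributing to the Floer differential may now have $\Phi_u$ with poles at branch points of $\Phi_C$ (as allowed by Theorem \ref{sl2 cylindrical model}), whereas for $U$ only the zeros of $\Phi_u$ over $\mathbf{a}$ are relevant. Enumerating these disks and verifying that the pole-carrying ones cancel (or pair up correctly with zero-carrying ones) is the computationally delicate step, but it is a finite check reducing to a small number of configurations on a disk domain with simple branching.
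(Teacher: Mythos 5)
Your primary route breaks at the first step. Proposition \ref{multicurve cone as surgery} (and the stretching argument of Corollary \ref{weak generation}) is stated, and proved, only for multicurve Lagrangians, i.e.\ products of base curves with the standard conic fiber ray; the Reeb chords it uses are base chords at the $y\to 0$ or $y\to\infty$ ends (for $U$ the connecting morphism is the base chord crossing the red line near $y=0$, not a ``fiber chord near $u=0$''). The Lagrangian $J$ is precisely not a multicurve: its fiber component escapes to $u=\infty$ over the red point (this is why the paper is cautious even about $I_\Pi$ being a bona fide object, cf.\ the footnote in Section \ref{sec: cupcap}). So ``expressing $J$ as the cone over a morphism between two $T$-type objects'' is not something the paper's surgery machinery hands you; producing such a resolution of $J$ is essentially equivalent to the lemma you are trying to prove, and the paper deliberately avoids it (it resolves only the $E$/simple-module side into $T$'s, and handles $J$ by direct computation). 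Even granting both cone descriptions, invoking Lemma \ref{morphism uniqueness} to match the connecting morphisms ``up to sign'' is not enough over $\Z$: the lemma gives a rank-one morphism space, but the cone on $n$ times the generator is not isomorphic to the cone on the generator, so you would still need a disk count to pin the coefficient to $\pm 1$.

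Your fallback is in outline the paper's actual proof: exhibit the unique intersection points $p\in\Hom(U,J)$ and $q\in\Hom(J,U)$ and show $p\cdot q$ and $q\cdot p$ are the identities. But the delicate point is not where you place it. With a single black strand, $\Phi_C$ is $1{:}1$ and has no branch points, so by Theorem \ref{sl2 cylindrical model} the map $\Phi_u$ has no poles at all; there is no cancellation of ``pole-carrying'' disks to organize. The genuine difficulty is existence, with count $\pm 1$, of the holomorphic triangle whose $\Phi_u$ has its single zero exactly over $\Phi_y^{-1}(\mathbf{a})$: this is a pointwise constraint matched against a one-parameter family of maps, so it is not a finite combinatorial check. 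The paper settles it by the moduli argument (due to Peng Zhou): the two-dimensional space of disks with three boundary marked points and one interior marked point contains the one-dimensional loci $\mathcal{M}_{S\to\C_u^*}$ and $\mathcal{M}_{S\to\C_y^*}$, whose boundary behavior forces intersection number $\pm 1$. Without this (or an equivalent) ingredient, your fallback does not close.
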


\begin{proof}
    There are unique intersection points $p \in \Hom(U,J)$ and $q \in \Hom(J,U)$. We will show explicitly that $p \cdot q = id \in \Hom(J, J)$ and $q \cdot p = id \in \Hom(U, U)$.

    The only point in $H^*\Hom(J,J)$ is the identity, which we call $r$. Therefore the only disk in $\C^*_y$ that we need to consider is the one shown in Figure \ref{fig:stab map composition base}, which passes through $\mathbf{a}$ once. The corresponding $\Phi_u$ must have have one zero and not pass through $\infty$. The only possibility for $\Phi_u$ must have image shown in Figure \ref{fig:stab map composition fiber}. We want to check that there is actually such a map $\Phi_u$ that sends $\Phi_y^{-1}(\mathbf{a})$ to zero.

    The following argument was explained to us by Peng Zhou; it is similar to arguments in \cite{ADLSZ}. 
    The domain $S$ is a disk with 3 marked boundary points $p$, $q$, and $r$ and one marked point in the interior. The moduli space of such disks is two-dimensional with 6 one-dimensional boundary components (when the marked point approaches any one of the three marked boundary points or any one of the three intervals between them). The moduli spaces of maps $\mathcal{M}_{S \to \C_u^*}$ (where the marked point is sent to zero) and $\mathcal{M}_{S \to \C_y^*}$ (where the marked point is sent to $\mathbf{a}$) are each one-dimensional with boundary. As we approach one boundary of $\mathcal{M}_{S \to \C_y^*}$, the marked point approaches the segment between $p$ and $q$. At the other boundary, it approaches the segment between $p$ and $r$. The boundaries of $\mathcal{M}_{S \to \C_u^*}$ are when the marked point approaches $r$ or the segment between $q$ and $p$. 
    In fact, we while we don't know precisely how the interior of said moduli behave (the simplest possibility is depicted in Figure \ref{fig:stab map moduli map} below), the intersection number 
    $\mathcal{M}_{S \to \C_u^*} \cap \mathcal{M}_{S \to \C_y^*}$ is determined by the boundary behavior, hence must be $\pm 1$.  

    There is also only one point (the identity) in $H^*\Hom(U,U)$. Call it $s$. A similar argument using the disks and moduli spaces shown in Figure \ref{fig:stab map other composition} shows that $p \cdot q = s$.
\end{proof}

\begin{figure}
    \centering
    \begin{subfigure}[b]{0.24\textwidth}
    \centering
    \begin{tikzpicture}
        \draw[thick] (-.5,-2) -- (-.5,0) arc(180:0:.5) -- (.5,-2);
        \node[red] at (0,0) {$*$};
    \end{tikzpicture}
    \caption{$U$, base}
    \end{subfigure}
    \begin{subfigure}[b]{0.24\textwidth}
    \centering
    \begin{tikzpicture}[scale=.5]
		\draw (-5.5,2.5) -- (0,2.5);
		\draw (-5.5,0) -- (0,0);
		\draw (-5.5,1.25) ellipse (0.25 cm and 1.25 cm);
		\draw (0,0) arc[
				start angle=-90,
				end angle=90,
				x radius=0.25cm,
				y radius=1.25cm
				];
	    \draw[dashed] (0,2.5) arc[
				start angle=90,
				end angle=270,
				x radius=0.25cm,
				y radius=1.25cm
				];
		\draw[thick] (.25,1) -- (-5.25,1);
        \node[above] at (0,2.5) {$\scriptstyle{u=\infty}$};
        \node[above] at (-5.5,2.5) {$\scriptstyle{u=0}$};
    \end{tikzpicture}
    \caption{$U$, fiber}
    \end{subfigure}
    \begin{subfigure}[b]{0.24\textwidth}
    \centering
    \begin{tikzpicture}
        \draw[thick] (0,0) -- (0,-2);
        \node[red] at (0,0) {$*$};
    \end{tikzpicture}
    \caption{$J$, base}
    \end{subfigure}
    \begin{subfigure}[b]{0.24\textwidth}
    \centering
    \begin{tikzpicture}[scale=.5]
		\draw (-5.5,2.5) -- (0,2.5);
		\draw (-5.5,0) -- (0,0);
		\draw (-5.5,1.25) ellipse (0.25 cm and 1.25 cm);
		\draw (0,0) arc[
				start angle=-90,
				end angle=90,
				x radius=0.25cm,
				y radius=1.25cm
				];
	    \draw[dashed] (0,2.5) arc[
				start angle=90,
				end angle=270,
				x radius=0.25cm,
				y radius=1.25cm
				];
        \node[above] at (0,2.5) {$\scriptstyle{u=\infty}$};
        \node[above] at (-5.5,2.5) {$\scriptstyle{u=0}$};
        \draw[thick, in=-90, out=180] (.25,1.25) to (-3,2.5);
        \draw[thick, in=90, out=180] (.25,1) to (-3,0);
        \draw[thick, dashed] (-3,2.5) arc[
				start angle=90,
				end angle=270,
				x radius=0.25cm,
				y radius=1.25cm
				];
    \end{tikzpicture}
    \caption{$J$, fiber}
    \end{subfigure}
    \caption{$U$ and $J$}
    \label{fig:stab}
\end{figure}

\begin{figure}
    \centering
    \begin{subfigure}[b]{0.3\textwidth}
    \centering
    \begin{tikzpicture}
        \draw[thick, name path=l1] (0,0) -- (0,-2);
        \draw[thick, cyan, name path=l2] plot [smooth, tension=0.6] coordinates {(.5,-2) (-.5,0) (0,.5) (.5,0) (1.5,-2)};
        \draw[teal, thick, name path=l3] (0,-.5) -- (2,-2);
        \draw[teal, thick] [domain=-3.14/2:-15,variable=\t,smooth,samples=75]
        plot ({\t r}: {-3.14/(4*\t)});
        \node[red] at (0,0) {$*$};
        \fill[name intersections={of=l1 and l2, by=p}] (intersection-1) circle (2pt) node[left] {$p$};
        \fill[name intersections={of=l2 and l3, by=q}] (intersection-1) circle (2pt) node[right] {$q$};
        \fill[name intersections={of=l1 and l3, by=r}] (intersection-1) circle (2pt) node[below right] {$r$};
        \begin{scope}[on background layer]
            \clip (p) -- (r) -- (q) -- (2,2) -- (-2,2) -- (-2,-2) -- cycle;
            \fill[red!20] plot [smooth, tension=0.6] coordinates {(.5,-2) (-.5,0) (0,.5) (.5,0) (1.5,-2)};
        \end{scope}
    \end{tikzpicture}
    \caption{Base}
    \label{fig:stab map composition base}
    \end{subfigure}
    \begin{subfigure}[b]{0.3\textwidth}
    \centering
    \begin{tikzpicture}[scale=.7]
		\draw (-5.5,2.5) -- (0,2.5);
		\draw (-5.5,0) -- (0,0);
		\draw (-5.5,1.25) ellipse (0.25 cm and 1.25 cm);
		\draw (0,0) arc[
				start angle=-90,
				end angle=90,
				x radius=0.25cm,
				y radius=1.25cm
				];
	    \draw[dashed] (0,2.5) arc[
				start angle=90,
				end angle=270,
				x radius=0.25cm,
				y radius=1.25cm
				];
        \node[above] at (0,2.5) {$\scriptstyle{u=\infty}$};
        \node[above] at (-5.5,2.5) {$\scriptstyle{u=0}$};
        \draw[thick, in=-90, out=180, name path=l1] (.25,.75) to (-2,2.5);
        \draw[thick, in=90, out=180] (.25,.5) to (-2,0);
        \draw[thick, dashed] (-2,2.5) arc[
				start angle=90,
				end angle=270,
				x radius=0.25cm,
				y radius=1.25cm
				];
        \draw[thick, cyan, name path=l2] (-5.25,1) -- (.25,1);
        \draw[thick, teal, in=-90, out=180, name path=l3a] (.25,1.75) to (-3,2.5);
        \draw[thick, teal, in=90, out=180, name path=l3b] (.25,1.5) to (-3,0);
        \draw[thick, teal, dashed] (-3,2.5) arc[
				start angle=90,
				end angle=270,
				x radius=0.25cm,
				y radius=1.25cm
				];
        \fill[name intersections={of=l1 and l3a, by=r}] (intersection-1) circle (3pt) node[above right] {$r$};
        \fill[name intersections={of=l1 and l2, by=p}] (intersection-1) circle (3pt) node[above right] {$p$};
        \fill[name intersections={of=l3b and l2, by=q}] (intersection-1) circle (3pt) node[below] {$q$};
        \begin{scope}[on background layer]
            \clip (-2,2.5) to[in=180, out=-90] (.25,.75) to (q) to (-3,0) to (-5.5,0) to (-5.5,2.5);
            \fill[red!20] (0,0) to (-5.5,0) arc (-90:90:.25cm and 1.25cm) to (-3,2.5) to[out=-90, in=180] (.25,1.75) to (.25,1.5) to[out=180,in=90] (-3,0) -- cycle;
            \fill[red!20] (q) -- (p) -- (-1,1.5) -- (-2,1.5) -- cycle;
        \end{scope}
    \end{tikzpicture}
    \caption{Fiber}
    \label{fig:stab map composition fiber}
    \end{subfigure}
    \begin{subfigure}[b]{0.3\textwidth}
    \centering
    \begin{tikzpicture}
        \draw (0:1.5) \foreach \x in {60,120,...,360} {  -- (\x:1.5) };
        \node at (30:1.75) {$(pr)$};
        \node at (90:1.75) {$(r)$};
        \node at (150:1.75) {$(rq)$};
        \node at (210:1.75) {$(q)$};
        \node at (270:1.75) {$(qp)$};
        \node at (330:1.75) {$(p)$};
        \draw[orange] (90:1.3) -- (270:1.3) node[pos=.6, right] {$\scriptstyle{\mathcal{M}_{S \to \C_u^*}}$};
        \draw[magenta, in=-30, out=210] (30:1.3) to (150:1.3) node[below] {$\scriptstyle{\mathcal{M}_{S \to \C_y^*}}$};
    \end{tikzpicture}
    \caption{$\mathcal{M}_{S \to \C_y^*}$ and $\mathcal{M}_{S \to \C_u^*}$}
    \label{fig:stab map moduli map}
    \end{subfigure}
    \caption{The disk for $p \cdot q = r$}
    \label{fig:stab map composition}
\end{figure}
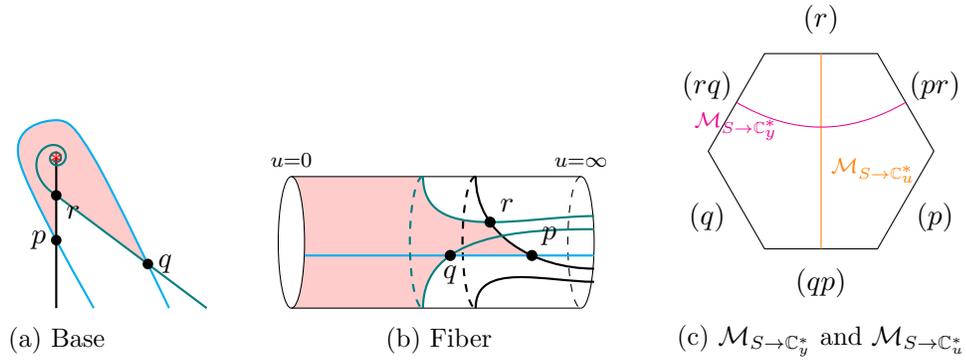

\begin{figure}
    \centering
    \begin{subfigure}[b]{0.3\textwidth}
    \centering
    \begin{tikzpicture}
        \draw[thick, cyan, name path=l2] (0,0) -- (1.5,-2);
        \draw[thick, name path=l1] plot [smooth, tension=0.6] coordinates {(-.5,-2) (-.5,0) (0,.5) (.5,0) (.5,-2)};
        \draw[teal, thick, teal, name path=l3] plot [smooth, tension=0.6] coordinates {(2,-2) (-.25,-.25)(.25,.25) (3,-2)};
        \node[red] at (0,0) {$*$};
        \fill[name intersections={of=l1 and l2, by=q}] (intersection-1) circle (2pt) node[right] {$q$};
        \fill[name intersections={of=l2 and l3, by=p}] (intersection-1) circle (2pt) node[below left] {$p$};
        \fill[name intersections={of=l1 and l3, by=s}] (intersection-1) circle (2pt) node[above right] {$s$};
        \begin{scope}[on background layer]
            \clip (s) to[out=-85, in=90] (q) -- (p) -- (-.5,-1) -- (-.5,1) -- cycle;
            \fill[red!20] plot [smooth, tension=0.6] coordinates {(2,-2) (-.25,-.25)(.25,.25) (3,-2)};
        \end{scope}
    \end{tikzpicture}
    \caption{Base}
    \end{subfigure}
    \begin{subfigure}[b]{0.3\textwidth}
    \centering
    \begin{tikzpicture}[scale=.7]
		\draw (-5.5,2.5) -- (0,2.5);
		\draw (-5.5,0) -- (0,0);
		\draw (-5.5,1.25) ellipse (0.25 cm and 1.25 cm);
		\draw (0,0) arc[
				start angle=-90,
				end angle=90,
				x radius=0.25cm,
				y radius=1.25cm
				];
	    \draw[dashed] (0,2.5) arc[
				start angle=90,
				end angle=270,
				x radius=0.25cm,
				y radius=1.25cm
				];
        \node[above] at (0,2.5) {$\scriptstyle{u=\infty}$};
        \node[above] at (-5.5,2.5) {$\scriptstyle{u=0}$};
        \draw[thick, cyan, in=-90, out=180, name path=l2a] (.25,1.5) to (-2,2.5);
        \draw[thick, cyan, in=90, out=180, name path=l2b] (.25,1.25) to (-2,0);
        \draw[thick,cyan, dashed] (-2,2.5) arc[
				start angle=90,
				end angle=270,
				x radius=0.25cm,
				y radius=1.25cm
				];
        \draw[thick, name path=l1] (-5.25,.75) -- (.25,.75);
        \draw[thick, teal, name path=l3, in=60, out=210] (0.15,2.25) to (-4,0);
        \draw[thick, teal, dashed, in=-60, out=150] (-4,0) to (-5.55,2.25);
        \fill[name intersections={of=l1 and l3, by=s}] (intersection-1) circle (3pt) node[above] {$s$};
        \fill[name intersections={of=l1 and l2b, by=q}] (intersection-1) circle (3pt) node[below] {$q$};
        \fill[name intersections={of=l3 and l2a, by=p}] (intersection-1) circle (3pt) node[above] {$p$};
        \begin{scope}[on background layer]
            \clip (0,2.5) to (0.15,2.25) to[in=35, out=210] (s) to (q) to (-2,0) to (-5.5,0) to (-5.5,2.5);
            \fill[red!20] (0,0) to (-5.5,0) arc (-90:90:.25cm and 1.25cm) to (-2,2.5) to[out=-90, in=180] (.25,1.5) to (.25,1.25) to[out=180,in=90] (-2,0) -- cycle;
        \end{scope}
    \end{tikzpicture}
    \caption{Fiber}
    \end{subfigure}
    \begin{subfigure}[b]{0.3\textwidth}
    \centering
    \begin{tikzpicture}
        \draw (0:1.5) \foreach \x in {60,120,...,360} {  -- (\x:1.5) };
        \node at (30:1.75) {$(ps)$};
        \node at (90:1.75) {$(s)$};
        \node at (150:1.75) {$(sq)$};
        \node at (210:1.75) {$(q)$};
        \node at (270:1.75) {$(qp)$};
        \node at (330:1.75) {$(p)$};
        \draw[magenta] (90:1.3) -- (270:1.3) node[pos=.6, right] {$\scriptstyle{\mathcal{M}_{S \to \C_y^*}}$};
        \draw[orange, in=-30, out=210] (30:1.3) to (150:1.3) node[below] {$\scriptstyle{\mathcal{M}_{S \to \C_u^*}}$};
    \end{tikzpicture}
    \caption{$\mathcal{M}_{S \to \C_y^*}$ and $\mathcal{M}_{S \to \C_u^*}$}
    \end{subfigure}
    \caption{The disk for $q \cdot p = s$}
    \label{fig:stab map other composition}
\end{figure}
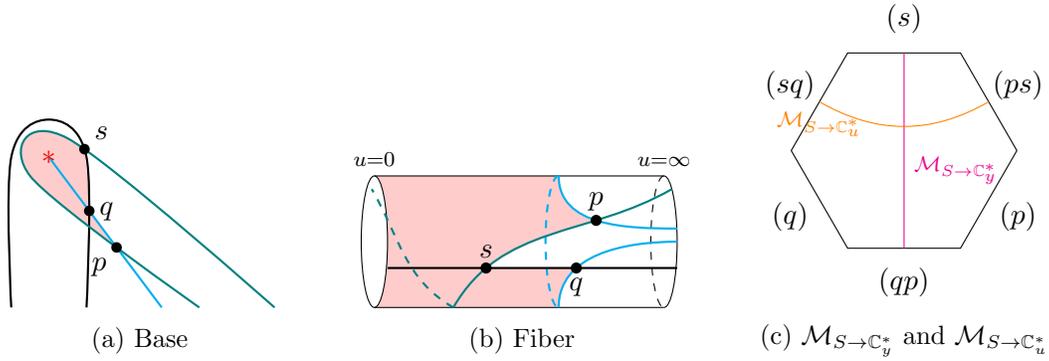

\begin{proposition}
    $I \cong \mathbb{A}(S)$
\end{proposition}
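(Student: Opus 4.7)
The strategy is to combine the already-established isomorphism $\mathbb{A}(S)\cong E$ with Lemma \ref{U and I}, expressing both $E$ and $I$ as the same cone on two $J$-type Lagrangians, and thereby obtaining $I\cong E\cong \mathbb{A}(S)$.

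First, I would recall the figure-eight decomposition of Figure \ref{fig:resolution of E}: pushing the two lower strands of $E$ downward toward $y=0$ breaks it as a cone of two downward-pointing $U$ Lagrangians, one sitting at each red point of the pair. By Proposition \ref{multicurve cone as surgery}, this is not merely a heuristic picture but an actual exact triangle in which the connecting morphism is a specific Reeb chord between the two $U$'s. Applying Lemma \ref{U and I} componentwise yields an isomorphism $E\cong \mathrm{cone}(J_L \to J_R)$, where $J_L,J_R$ are the $J$-type Lagrangians at the two red points and the connecting morphism is transported from the original Reeb chord via the $U\cong J$ equivalences.

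Second, I would identify this cone geometrically with $I$. Both $J_L$ and $J_R$ have fiber components wrapping up near $u=\infty$, and the stop associated with $\mathcal{W}_{\mathbf{a}}$ vanishes precisely over the points of $\mathbf{a}$; hence there is a natural short Reeb chord between $J_L$ and $J_R$ living in the fiber region over the interval between the two red points (where the stop is absent along a portion of the circle at $u=\infty$). By Proposition \ref{multicurve cone as surgery} (applied to an admissible perturbation of these Lagrangians), the cone of $J_L\to J_R$ over this chord is computed by the surgery-at-infinity $J_L\# J_R$. A direct inspection, using the description of $I$ recalled in Figure \ref{fig:cups and caps} and the cylindrical model of Theorem \ref{sl2 cylindrical model}, shows that this surgery produces exactly $I$: the two fiber curves merge into the single wrapping curve of $I$, and the two base curves are joined near the red points into the horizontal interval between them.

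The main obstacle is in the second step: carefully pinning down the correct Reeb chord and verifying that the resulting surgery-at-infinity gives precisely $I$ (in particular, that the stopped/unstopped behavior over the two red points matches, and the fiber curve has the correct isotopy class). This is essentially a local computation near the points of $\mathbf{a}$ analogous in spirit to the disk-counting argument in Lemma \ref{U and I}, but it has to be carried out in the variant of the multicurve formalism that accommodates the non-standard boundary behavior of $I$. Once this identification is in hand, matching the connecting morphisms is forced by a grading argument: the relevant Hom space is one-dimensional in the minimal $J$-degree, in the style of Proposition \ref{slide move map}, so the constructed map must agree with the distinguished one up to a unit.
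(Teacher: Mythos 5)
Your overall route is genuinely different from the paper's, and the difference matters for where the work lands. The paper proves $I\cong\mathbb{A}(S)$ directly from the KLRW side: the resolution \eqref{eqn:simple resolution} exhibits $S$ as the cone of a map $\upsilon_-\to\upsilon_+$ which is pinned down as the unique (up to scalar) class with $C_1$-degree zero and homological degree zero; applying $\mathbb{A}$ and Lemma \ref{U and I} converts this into a cone $J_-\to J_+$, the only $C_1$-degree-zero generators of $\Hom(J_-,J_+)$ are $st_1$ and $st_2$ (Figure \ref{fig:gluing stabs}), only $st_1$ has homological degree zero, and surgery at $st_1$ is $I$. The statement $E\cong I$ is then a corollary. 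You instead take $\mathbb{A}(S)\cong E$ as input and try to prove $E\cong I$ geometrically via the figure-eight decomposition; that order of deduction is legitimate, and the intermediate object (a cone on a map between the two $J$-type Lagrangians) is the same.

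The gap is in how you pin down the connecting morphism. First, the figure-eight decomposition does not hand you ``a specific Reeb chord'': the figure-eight is reassembled from the two downward $U$'s using both bottom chords, and the paper's lemma (via the action-filtration argument) only asserts a cone on \emph{some} morphism; so after transporting through the $U\cong J$ isomorphisms you do not a priori know the degree, or a representative, of the map you must match. Second, your uniqueness claim that the relevant Hom space is one-dimensional in the minimal J-degree is not established and is likely insufficient: the paper needs the $C_1$-grading to cut the candidates down to $st_1,st_2$ and then the \emph{homological} degree (not the J-degree) to separate them, and without control of the $C_1$-degree of your transported map the further wrapped generators are not excluded. Third, over $\Z$ a rank-one graded piece only determines the map up to an integer multiple, and the cone is the surgery only if the map is a generator; in the paper's route primitivity is automatic because the map is the image under the fully faithful embedding $\mathbb{A}$ of an explicit KLRW diagram, whereas in your route it would need a separate argument (e.g.\ pairing with $T_{\Pi(n)}$ and using $\Hom(T_{\Pi(n)},E)\cong\Z$). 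Finally, your proposed distinguished chord at $u=\infty$ over the interval between the two red points does not match the geometry actually used: the surgery producing $I$ is performed at the interior intersection point $st_1$, whose base component $s$ only appears after wrapping $J_-$ across the region below the red points. This identification is exactly the crux, not a routine local check, so as written the second step of your plan is incomplete.
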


\begin{proof}
    The resolution of the simple module $S_\theta$ given in \eqref{eqn:simple resolution} is the cone over a map in homological degree zero from $\upsilon_-$ to $\upsilon_+$. It is the unique map in $H^0\Hom(\upsilon_-,\upsilon_+)$ up to scalar multiple that has $C_1$ degree zero. The only two maps in $Hom(J_-,J_+)$ in $C_1$ degree zero are $s t_1$ and $s t_2$ shown in Figure \ref{fig:gluing stabs}. Of these points, only $s t_1$ has homological degree zero. The result of surgery at this point is $I$.
\end{proof}

\begin{corollary}
    $E \cong I$. 
\end{corollary}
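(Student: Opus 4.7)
The plan is to simply chain together the two preceding identifications. The lemma established (in the single pair case) that $\mathbb{A}(S) \cong E$, and the immediately preceding proposition shows $I \cong \mathbb{A}(S)$. Composing these gives the desired isomorphism $E \cong I$. So at the level of the case treated explicitly (namely $|\mathbf{a}| = 2$, $d = 1$), there is nothing left to do beyond recording this composition.

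To get the statement for general $\Pi$, I would note that both $E_\Pi$ and $I_\Pi$ are built pair-by-pair from the paths in $\Pi$: away from a neighborhood of each pair, the two Lagrangians agree (they are both just vertical lines projecting to the endpoints of $\Pi$ or, in the $I_\Pi$ case, pushed toward $u = \infty$ over the points of $\mathbf{a}$), and the nontrivial difference is localized near each individual path. Correspondingly, the simple KLRW module $S_{\Pi(n)}$ is a tensor product over pairs of the single-pair simples. Both the preceding proposition and the earlier lemma go through componentwise without change -- the only holomorphic disks contributing to the relevant Hom spaces are supported over individual pairs (by the action filtration / disjoint support argument already used in the $E$ case), so the disk-counting arguments reduce to the single-pair calculation. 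Hence the isomorphisms $E_\Pi \cong \mathbb{A}(S_\Pi) \cong I_\Pi$ can be assembled from the single-pair case.

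The only potential obstacle is the caveat flagged in the footnote to the construction of $I_\Pi$: one must trust that $I_\Pi$ really defines a bona fide object of the wrapped Fukaya category once properly made conic, and that the cylindrical-model disk counts apply to it as expected. Assuming this (as the paper does, calling the corresponding argument a sketch), the corollary follows.
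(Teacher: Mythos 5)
Your proposal matches the paper's intent exactly: the corollary is recorded as the composite of the lemma $\mathbb{A}(S)\cong E$ and the proposition $I\cong\mathbb{A}(S)$, with the general $\Pi$ case handled, as the paper itself says, by "many disjoint copies of the same argument," and the same caveat about $I_\Pi$ being a bona fide object is flagged in the paper's footnote. Nothing further is needed.
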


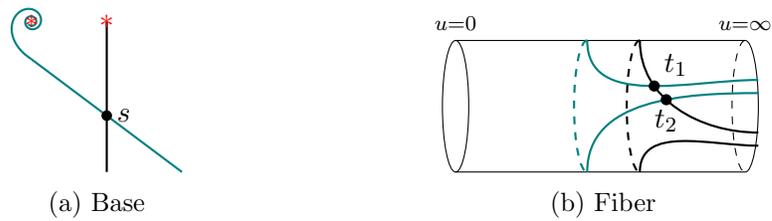
\begin{figure}
    \centering
    \begin{subfigure}[b]{0.4\textwidth}
    \centering
    \begin{tikzpicture}
        \draw[thick, name path=l1] (1,0) -- (1,-2);
        \draw[teal, thick, name path=l3] (0,-.5) -- (2,-2);
        \draw[teal, thick] [domain=-3.14/2:-15,variable=\t,smooth,samples=75]
        plot ({\t r}: {-3.14/(4*\t)});
        \node[red] at (0,0) {$*$};
        \node[red] at (1,0) {$*$};
        \fill[name intersections={of=l1 and l3}] (intersection-1) circle (2pt) node[right] {$s$};
    \end{tikzpicture}
    \caption{Base}
    \end{subfigure}
    \begin{subfigure}[b]{0.4\textwidth}
    \centering
    \begin{tikzpicture}[scale=.7]
		\draw (-5.5,2.5) -- (0,2.5);
		\draw (-5.5,0) -- (0,0);
		\draw (-5.5,1.25) ellipse (0.25 cm and 1.25 cm);
		\draw (0,0) arc[
				start angle=-90,
				end angle=90,
				x radius=0.25cm,
				y radius=1.25cm
				];
	    \draw[dashed] (0,2.5) arc[
				start angle=90,
				end angle=270,
				x radius=0.25cm,
				y radius=1.25cm
				];
        \node[above] at (0,2.5) {$\scriptstyle{u=\infty}$};
        \node[above] at (-5.5,2.5) {$\scriptstyle{u=0}$};
        \draw[thick, in=-90, out=180, name path=l1] (.25,.75) to (-2,2.5);
        \draw[thick, in=90, out=180] (.25,.5) to (-2,0);
        \draw[thick, dashed] (-2,2.5) arc[
				start angle=90,
				end angle=270,
				x radius=0.25cm,
				y radius=1.25cm
				];
        \draw[thick, teal, in=-90, out=180, name path=l3a] (.25,1.75) to (-3,2.5);
        \draw[thick, teal, in=90, out=180, name path=l3b] (.25,1.5) to (-3,0);
        \draw[thick, teal, dashed] (-3,2.5) arc[
				start angle=90,
				end angle=270,
				x radius=0.25cm,
				y radius=1.25cm
				];
        \fill[name intersections={of=l1 and l3a}] (intersection-1) circle (3pt) node[above right] {$t_1$};
        \fill[name intersections={of=l1 and l3b}] (intersection-1) circle (3pt) node[below] {$t_2$};
    \end{tikzpicture}
    \caption{Fiber}
    \end{subfigure}
    \caption{The intersection points in $Hom(J_-,J_+)$}
    \label{fig:gluing stabs}
\end{figure}

\bibliographystyle{plain}
\bibliography{ref}

\begin{thebibliography}{10}

\bibitem{abouzaid-smith-arc}
Mohammed Abouzaid and Ivan Smith.
\newblock The symplectic arc algebra is formal.
\newblock {\em Duke Mathematical Journal}, 165(6):985--1060, 2016.

\bibitem{abouzaid-smith:khovanov}
Mohammed Abouzaid and Ivan Smith.
\newblock Khovanov homology from {F}loer cohomology.
\newblock {\em Journal of the American Mathematical Society}, 32(1):1--79, 2019.

\bibitem{aganagic-knot-1}
Mina Aganagic.
\newblock Knot categorification from mirror symmetry, part {I}: Coherent sheaves.
\newblock {\em arXiv:2004.14518}.

\bibitem{aganagic-knot-2}
Mina Aganagic.
\newblock Knot categorification from mirror symmetry, part {II}: Lagrangians.
\newblock {\em arXiv:2105.06039}.

\bibitem{aganagic-icm}
Mina Aganagic.
\newblock Homological knot invariants from mirror symmetry.
\newblock In {\em Proc. Int. Cong. Math. 2022}, volume~3, page 2108–2144. EMS Press, 2023.

\bibitem{ADLSZ}
Mina Aganagic, Ivan Danilenko, Yixuan Li, Vivek Shende, and Peng Zhou.
\newblock Quiver {H}ecke algebras from {F}loer homology in {C}oulomb branches.
\newblock {\em arXiv:2406.04258}.

\bibitem{ALR}
Mina Aganagic, Elise LePage, and Miroslav Rapcak.
\newblock Homological link invariants from {F}loer theory.
\newblock {\em arXiv:2305.13480}.

\bibitem{BFN}
Alexander Braverman, Michael Finkelberg, and Hiraku Nakajima.
\newblock Towards a mathematical definition of {C}oulomb branches of 3-dimensional n=4 gauge theories, {II}.
\newblock {\em Advances in Theoretical and Mathematical Physics}, 22(5):1071--1147, 2018.

\bibitem{Cheng-bigrading}
Zhechi Cheng.
\newblock Bigrading the symplectic {K}hovanov cohomology.
\newblock {\em Algebraic \& Geometric Topology}, 23(9):4057--4086, 2023.

\bibitem{Chuang-Rouquier}
Joseph Chuang and Rapha{\"e}l Rouquier.
\newblock Perverse equivalences.
\newblock {\em Available at \url{https://www.math.ucla.edu/~rouquier/papers/perverse.pdf}}.

\bibitem{FOOO-surgery}
Kenji Fukaya, Yong-Geun Oh, Hiroshi Ohta, and Kaoru Ono.
\newblock Chapter 10: {L}agrangian surgery and holomorphic discs.
\newblock {\em Unpublished chapter from {\em {L}agrangian intersection {F}loer theory: anomaly and obstruction}, \url{https://www. math.kyoto-u.ac.jp/fukaya/Chapter10071117.pdf}}.

\bibitem{gaiottowitten2011}
Davide Gaiotto and Edward Witten.
\newblock {Knot Invariants from Four-Dimensional Gauge Theory}.
\newblock {\em Adv. Theor. Math. Phys.}, 16(3):935--1086, 2012.

\bibitem{GPS2}
Sheel Ganatra, John Pardon, and Vivek Shende.
\newblock Sectorial descent for wrapped {F}ukaya categories.
\newblock {\em Journal of the American Mathematical Society}, 37(2):499--635, 2024.

\bibitem{Khovanov}
Mikhail Khovanov.
\newblock {A categorification of the Jones polynomial}.
\newblock {\em Duke Mathematical Journal}, 101(3):359 -- 426, 2000.

\bibitem{Khovanov-Lauda-diagrammatics-1}
Mikhail Khovanov and Aaron Lauda.
\newblock A diagrammatic approach to categorification of quantum groups {I}.
\newblock {\em Representation Theory}, 13(14):309--347, 2009.

\bibitem{Khovanov-Lauda-diagrammatics-2}
Mikhail Khovanov and Aaron Lauda.
\newblock A diagrammatic approach to categorification of quantum groups {II}.
\newblock {\em Transactions of the American Mathematical Society}, 363(5):2685--2700, 2011.

\bibitem{lipshitz2006cylindrical}
Robert Lipshitz.
\newblock A cylindrical reformulation of {H}eegaard {F}loer homology.
\newblock {\em Geometry \& Topology}, 10(2):955--1096, 2006.

\bibitem{Rouquier-Manion}
Andrew Manion and Raphael Rouquier.
\newblock Higher representations and cornered {H}eegaard {F}loer homology.
\newblock {\em arXiv:2009.09627}.

\bibitem{manolescu}
Ciprian Manolescu.
\newblock {Nilpotent slices, Hilbert schemes, and the Jones polynomial}.
\newblock {\em Duke Mathematical Journal}, 132(2):311 -- 369, 2006.

\bibitem{manolescu2}
Ciprian Manolescu.
\newblock Link homology theories from symplectic geometry.
\newblock {\em Advances in Mathematics}, 211(1):363--416, 2007.

\bibitem{Ozsvath-Szabo-knot}
Peter Ozsv{\'a}th and Zolt{\'a}n Szab{\'o}.
\newblock Holomorphic disks and knot invariants.
\newblock {\em Advances in Mathematics}, 186(1):58--116, 2004.

\bibitem{Ozsvath-Szabo-1}
Peter Ozsv{\'a}th and Zolt{\'a}n Szab{\'o}.
\newblock Holomorphic disks and topological invariants for closed three-manifolds.
\newblock {\em Annals of Mathematics}, pages 1027--1158, 2004.

\bibitem{Rouquier-2kac}
Rapha{\"e}l Rouquier.
\newblock 2-{K}ac-{M}oody algebras.
\newblock {\em arXiv:0812.5023}.

\bibitem{SS}
Paul Seidel and Ivan Smith.
\newblock A link invariant from the symplectic geometry of nilpotent slices.
\newblock {\em Duke Math. J.}, 131(1):453--514, 2006.

\bibitem{Sheridan}
Nick Sheridan.
\newblock Homological mirror symmetry for {C}alabi--{Y}au hypersurfaces in projective space.
\newblock {\em Inventiones mathematicae}, 199(1):1--186, 2015.

\bibitem{webster2019coherent}
Ben Webster.
\newblock Coherent sheaves and quantum {C}oulomb branches {I}: tilting bundles from integrable systems.
\newblock {\em arXiv:1905.04623}.

\bibitem{webster2022coherent}
Ben Webster.
\newblock Coherent sheaves and quantum {C}oulomb branches {II}: quiver gauge theories and knot homology.
\newblock {\em arXiv:2211.02099}.

\bibitem{webster}
Ben {Webster}.
\newblock {Knot invariants and higher representation theory}.
\newblock {\em arXiv:1309.3796}.

\bibitem{Webster-weighted}
Ben Webster.
\newblock Weighted {K}hovanov-{L}auda-{R}ouquier algebras.
\newblock {\em arXiv:1209.2463}.

\bibitem{webster2016tensor}
Ben Webster.
\newblock Tensor product algebras, {G}rassmannians and {K}hovanov homology.
\newblock {\em Physics and mathematics of link homology}, 680:23--58, 2016.

\bibitem{witten-fivebranes}
Edward Witten.
\newblock Fivebranes and knots.
\newblock {\em Quantum Topology}, 3(1):1--137, 2011.

\end{thebibliography}

\end{document}